\long\def\unmarkedfootnote#1{{\long\def\@makefntext##1{##1}\footnotetext{#1}}}
\theoremstyle{plain}
\newtheorem{thm}{Theorem}[section]
\newtheorem{lemma}[thm]{Lemma}
\newtheorem{prop}[thm]{Proposition}
\newtoks\prt
\theoremstyle{definition}
\newtheorem{remark}[thm]{Remark}
\newtheorem{definition}[thm]{Definition}
\def\eqn#1$$#2$${\begin{equation}\label#1#2\end{equation}}
\numberwithin{equation}{section}
\def\div{\operatorname{div}}
\def\topi{\operatorname{im}_T}
\def\opartial{}
\def\cof{\operatorname{cof}}
\def\adj{\operatorname{adj}}
\def\Deg{\operatorname{Deg}}
\def\F{\mathcal{F}}
\def\diam{\operatorname{diam}}
\def\dist{\operatorname{dist}}
\def\epsilon{\varepsilon}
\def\ep{\varepsilon}
\def\en{\mathbb N}
\def\er{\mathbb R}
\def\ff{\varphi}
\def\haus{\mathcal H^{n-1}}
\def\hau2{\mathcal H^{2}}
\def\hauso{\mathcal H^{n-2}}
\def\im{\operatorname{im}}
\def\loc{\operatorname{loc}}
\def\mir1{\mathcal L_1}
\def\R{\widetilde{R}}
\def\Q{\mathcal{Q}}
\def\R{\widetilde{R}}
\def\rn{\mathbb R^n}
\def\spt{\operatorname{supp}}
\def\ue{\mathbf u}
\def\x{\widetilde{x}}
\def\y{\widetilde{y}}
\def\ve{\mathbf v}
\newdimen\vintkern
\def\vint{{-}\kern-\vintkern\int}
\newtoks\by
\newtoks\paper
\newtoks\book
\newtoks\jour
\newtoks\yr
\newtoks\pages
\newtoks\vol
\newtoks\publ
\def\ota{{\hbox\vol{???}}}
\def\cLear{\by=\ota\paper=\ota\book=\ota\jour=\ota\yr=\ota
\pages=\ota\vol=\ota\publ=\ota}
\def\endpaper{\the\by, {\the\paper},
\textit{\the\jour} \textbf{\the\vol} (\the\yr), \the\pages.\cLear}
\def\endbook{\the\by, \textit{\the\book}, \the\publ.\cLear}
\def\endprep{\the\by, \textit{\the\paper}, \the\jour.\cLear}
\def\endyearprep{\the\by, \textit{\the\paper}, \the\jour, (\the\yr).\cLear}
\def\name#1#2{#2 #1}
\def\nom{ \rm no. }
\long\def\clrred#1\endred{{\color{red}#1}}
\long\def\clrmagenta#1\endmagenta{{\color{magenta}#1}}
\long\def\clrpurple#1\endpurple{{\color{purple}#1}}
\long\def\clr#1\endblue{{\color{black}#1}}
\title{Weak limit of homeomorphisms in $W^{1,n-1}$ and (INV) condition}
\author{Anna Dole\v{z}alov\'a}
\address{Department of Mathematical Analysis, Charles University,
So\-ko\-lovsk\'a 83, 186~00 Prague 8, Czech Republic}
\email{\tt dolezalova@karlin.mff.cuni.cz; hencl@karlin.mff.cuni.cz; maly@karlin.mff.cuni.cz}
\author{Stanislav Hencl}
\author{Jan Mal\'y}
\keywords{limits of Sobolev homeomorphisms, invertibility} 
\subjclass[2000]{46E35}
\thanks{The authors were supported 
by the grant GA\v{C}R P201/21-01976S. The first author was also supported in part by the Danube Region Grant no.
8X2043 of the Czech Ministry of Education, Youth and Sports and by the project Grant Schemes at CU, reg. no. CZ.02.2.69/0.0/0.0/19\_073/0016935.}
\date{\today}
\begin{document}

\begin{abstract}
Let $\Omega,\Omega'\subset\er^3$ be 
Lipschitz 
domains, 
let $f_m:\Omega\to\Omega'$ be a sequence of homeomorphisms with prescribed Dirichlet boundary condition and $\sup_m \int_{\Omega}(|Df_m|^2+1/J^2_{f_m})<\infty$. Let $f$ be a weak limit of $f_m$ in $W^{1,2}$. We show that $f$ is invertible a.e., more precisely it satisfies the (INV) condition of Conti and De Lellis and thus it has all the nice properties of mappings in this class. 

Generalization to higher 
dimensions 
and 
an 
example showing sharpness of the condition $1/J^2_f\in L^1$ are also given. 
Using this example we also show that unlike the planar case the class of weak limits and the class of strong limits of $W^{1,2}$ Sobolev homeomorphisms in $\er^3$ are not the same. 
\end{abstract}

\maketitle

\section{Introduction}
In this paper, we study classes of mappings that might serve as classes of deformations in Nonlinear Elasticity models. 
Let $\Omega\subset\rn$ be a domain set and let $f\colon\Omega\to\rn$ be a mapping.
Following the pioneering papers of Ball \cite{Ball} and Ciarlet and Ne\v{c}as \cite{CN} we ask if our mapping is in some sense injective as the physical `non-interpenetration of the matter' 
indicates
that a deformation should be one-to-one.
We are led to study nonlinear classes of mappings based on integrability of gradient minors
(\cite{Ball77}, \cite{Ball}, \cite{Sv}, \cite{GMS}, \cite{MTY}), 
on distortion
(\cite{Re}, \cite{IM}, \cite{HK})
or on finiteness of some energy functional
(\cite{Ball77}, \cite{FG}, \cite{CDL}, \cite{HeMo11}, \cite{HMC}). 
The list of citations is far from being representative and the reader is encouraged
to see also references therein.
Our aim is to 
study injectivity properties 
of the mapping $f$. 
One can follow the ideas of Ball \cite{Ball} and 
assume 
that our mapping has finite energy and that the energy functional $\int_{\Omega} W(Df)$ contains special terms
(like powers of $Df$, $\operatorname{adj} Df$ and $J_f$). 
Under quite strong assumptions, any mapping with finite energy and reasonable boundary data 
is a homeomorphism (\cite{Ball}, \cite{HK}).
However, is not realistic to insist on this requirement as  
in some real physical deformations cavitations or even fractures may occur. Thus we need conditions which guarantee that our mapping is injective a.e.\ but on some small set cavities may arise.  

This was nicely settled in the the work of M\"uller and Spector \cite{MS} where they studied a class of mappings that satisfy $J_f>0$ a.e.\ together with the (INV) condition (see also e.g.\  \cite{BHMC,HMC, HeMo11,MST,SwaZie2002,SwaZie2004,T}). Informally speaking, the (INV) condition means that the ball $B(x,r)$ is mapped inside the image of the sphere $f(S(a,r))$ and the complement $\Omega\setminus \overline{B(x,r)}$ is mapped outside $f(S(a,r))$ (see Preliminaries for the formal definition). 
From \cite{MS} we know that mappings in this class are one-to-one a.e. and that this class is weakly closed which makes it suitable for variational approach. Moreover, 
any mapping in this class has 
many desirable properties, it maps disjoint balls into essentially disjoint \textcolor{black}{sets}, $\deg(f,B,\cdot)\in\{0,1\}$ for a.e. ball $B$, its distributional determinant equals to the absolutely continuous part $J_f$ plus a countable sum of positive multiples of Dirac measures (these corresponds to created cavities) and so on.  

In all results in the previous paragraph the authors assume that $f\in W^{1,p}(\Omega,\rn)$ for some $p>n-1$. 
However in some real models for $n=3$ one often works with 
integrands containing the classical Dirichlet term $|Df|^2$ 
and thus this assumption is too strong. Therefore Conti and De Lellis \cite{CDL} introduced the concept of (INV) condition also for $W^{1,n-1}\cap L^{\infty}$ (see also \cite{BHMCR} and \cite{BHMCR2} for some recent work) and studied Neohookean functionals of the type
\eqn{energy2}
$$
\int_{\Omega}\bigl(|Df(x)|^2+\varphi(J_f(x))\bigr)\; dx
$$
for $n=3$, where $\varphi$ is convex, 
$\lim_{t\to 0+}\varphi(t)=\infty$ and $\lim_{t\to\infty}\frac{\varphi(t)}{t}=\infty$. 
They proved that mappings in the (INV) class that satisfy $J_f>0$ a.e. have nice properties like mappings in \cite{MS} but unfortunately this class is not weakly closed and therefore cannot be used in variational models easily. Our main aim is to fix this and to show that for suitable $\varphi$ the validity of the (INV) condition is preserved also for the weak limit.  
Note that for $p>n-1$ we know that $f$ is continuous on a.e. sphere and thus it is easy to define what is "inside of the image of the sphere $f(S(a,r))$" and 
we can define (INV) easily. In the situation $f\in W^{1,n-1}\cap L^{\infty}$ mappings do not need to be continuous on spheres and it is necessary to use topological degree for 
some classes of discontinuous mappings (in our case,
$W^{1,n-1}$ on a sphere) 
which was introduced by Brezis and Nirenberg \cite{BN} (see also \cite{CDL}). 

Let us note that homeomorphisms clearly satisfy the (INV) condition and 
their weak limits in $W^{1,p},\ p>n-1 ,$ also satisfy (INV) (see \cite[Lemma 3.3]{MS}). Moreover, cavitation can be written as a weak limit (even strong limit) of homeomorphisms.  
Therefore the class of weak limits of Sobolev homeomorphisms is a suitable class for variational models involving cavitation and we can expect some invertibility properties in this class. This is clear for $p>n-1$ because of the (INV) condition but it can fail in the limiting case of limit of $W^{1,n-1}$ homeomorphisms as shown e.g. in Bouchala, Hencl and Molchanova \cite{BHM}. 
The class of weak limits of Sobolev homeomorphisms was recently characterized in the planar case by Iwaniec and Onninen \cite{IO,IO2} and De Philippis and Pratelli \cite{DPP}. The situation in higher dimension is much more difficult and deserves further study. 

Our main result is the following theorem which shows that weak limits of $W^{1,n-1}$ homeomorphisms are invertible a.e. (and much better) under suitable integrability of $1/J_f$. 
We denote 
\eqn{energy_def}
$$
\F(f)=\int_{\Omega}\left(|D f|^{n-1}+\ff(J_f)\right)\,dx,
$$
where 
\eqn{varphi}
$$
\ff \text{ is a positive convex function on }(0,\infty)\text{ with }
\lim_{t\to0^+}\ff(t)=\infty,\ \varphi(t)=\infty\text{ for }t\leq 0
$$
and there is $A>0$ with
\eqn{varphi2}
$$
A^{-1}\ff(t)\le \ff(2t)\leq A\ff(t),\qquad t\in (0,\infty).
$$
We need to further assume that all $f_m$ have the same Dirichlet boundary condition.

\begin{thm}\label{main} 
Let $n\geq 3$, $a=\frac{n-1}{n^2-3n+1}$ and $\Omega,\Omega'\subset\rn$ be Lipschitz domains.
Let $\ff$ 
satisfy \eqref{varphi}, \eqref{varphi2} and 
$$
\ff(t)\geq \frac{1}{t^a}\text{ for every }t\in(0,\infty). 
$$
Let $f_m\in W^{1,n-1}(\Omega,\Omega')$, 
$m =0,1,2\dots$, 
be a sequence of 
homeomorphisms of 
$\overline\Omega$ onto $\overline\Omega'$ 
with $J_{f_m}> 0$ a.e. 
such that 
\eqn{key}
$$
\sup_m \F(f_m)<\infty. 
$$
Assume further that $f_m=f_0$ on $\partial\Omega$ for all $m\in\en$. 
Let $f$ be a weak limit of $f_m$ in $W^{1,n-1}(\Omega,\rn)$ 
, then $f$ satisfies the (INV) condition. 
\end{thm}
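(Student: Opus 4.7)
The plan is to transport the (INV) condition from each approximating homeomorphism $f_m$, where it is automatic, to the weak limit $f$, by means of the Brezis--Nirenberg topological degree for VMO maps applied on $(n-1)$-dimensional spheres. The scheme has three stages: a slicing argument selecting good radii, convergence of degrees on those spheres, and reassembly into a pointwise (INV) statement for $f$ on $\Omega$.

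The first stage is slicing. Fix $x\in\Omega$. By \eqref{key} and Fubini, $\int_{S(x,r)}(|Df_m|^{n-1}+\ff(J_{f_m}))\,d\haus$ is uniformly bounded in $m$ for a.e.\ $r\in(0,\dist(x,\partial\Omega))$ along a subsequence that may depend on $r$. Using Rellich to get $f_m\to f$ strongly in $L^{n-1}(\Omega)$, a diagonal extraction in $r$ yields, for a.e.\ $r$, a subsequence along which $f_m|_{S(x,r)}\rightharpoonup f|_{S(x,r)}$ in $W^{1,n-1}(S(x,r))$. The critical Sobolev embedding $W^{1,n-1}(S(x,r))\hookrightarrow VMO(S(x,r))$ then places all of these sphere restrictions inside the domain of the Brezis--Nirenberg degree.

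The second stage is degree convergence. Since $f_m$ is an orientation-preserving homeomorphism ($J_{f_m}>0$), one has $\deg(f_m|_{S(x,r)},y)=\mathbf{1}_{f_m(B(x,r))}(y)$ for $y\notin f_m(S(x,r))$. To pass to the limit via the continuity of the Brezis--Nirenberg degree requires VMO convergence, not merely weak $W^{1,n-1}$ convergence, on the sphere. This is where the integrability $1/J_f^a\in L^1(\Omega)$ with $a=(n-1)/(n^2-3n+1)$ is decisive: combined with $|Df|^{n-1}\in L^1(\Omega)$ and Hadamard's inequality $J_f\le n^{-n/2}|Df|^n$, it yields equi-integrability of an appropriate power of $|\cof Df_m|$ and, via a further Fubini and oscillation estimate, strong convergence of $f_m|_{S(x,r)}$ to $f|_{S(x,r)}$ in $VMO(S(x,r))$ for a.e.\ $r$. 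Setting
$$
\topi(f,B(x,r))=\{y\notin f(S(x,r)):\deg(f|_{S(x,r)},y)=1\},
$$
continuity of the VMO degree then gives $|f_m(B(x,r))\triangle\topi(f,B(x,r))|\to 0$.

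The final stage is verifying (INV) for $f$: $f(y)\in\topi(f,B(x,r))$ for a.e.\ $y\in B(x,r)$ and $f(y)\notin\topi(f,B(x,r))$ for a.e.\ $y\in\Omega\setminus\overline{B(x,r)}$, for a.e.\ pair $(x,r)$. These inclusions pass from the corresponding statements for $f_m$ (automatic, by injectivity) via the degree convergence above and a Fatou/Egorov argument accommodating the lack of continuity of $f$. The main obstacle will be establishing strong VMO convergence on sphere slices: weak $W^{1,n-1}(\Omega)$ convergence alone does not imply VMO convergence on spheres, and orchestrating the slicing together with the Jacobian lower bound to close this gap -- which is exactly what forces the exponent $a=(n-1)/(n^2-3n+1)$ -- is the heart of the proof.
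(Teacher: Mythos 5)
Your outline parts ways with the paper at the second stage, and the step you flag as ``the heart of the proof'' — strong $VMO$ convergence of $f_m|_{S(x,r)}$ to $f|_{S(x,r)}$ on a.e.\ sphere — is precisely what fails. Weak $W^{1,n-1}(\Omega)$ convergence plus a bound on $\int 1/J_{f_m}^a$ does \emph{not} upgrade to strong $W^{1,n-1}$ or $VMO$ convergence on sphere slices, and the Hadamard inequality and Fubini cannot close this gap: the sphere restrictions converge only weakly in $W^{1,n-1}(\partial B)$ and a.e.\ pointwise (this is what the paper's notion of a ``good ball'' gives). If strong convergence on spheres were available, the situation would reduce to the strong-convergence Case~(b) of the paper's Theorem~\ref{distthm}, which is handled by a short argument and does not use the Jacobian integrability at all. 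The fact that the exponent $a$ matters precisely because strong sphere-convergence is unavailable is exactly the content distinguishing Cases~(a) and~(b). Moreover, your intended conclusion $|f_m(B)\triangle\topi(f,B)|\to 0$ is not established and is delicate: the counterexample of Theorem~\ref{example} exhibits homeomorphisms whose ``bubbles'' $\{\deg(f_m,K,\cdot)\ne\Deg(f,K,\cdot)\}$ persist and carry the degree from $+1$ to $-1$ in the limit, so no soft degree-continuity argument can decide the borderline exponent.

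The paper instead reduces Theorem~\ref{main} to Theorem~\ref{distthm}(a), which assumes $\|K_{f_m}^{1/(n-1)}\|_1\le C_1$ (deduced from $\int(|Df_m|^{n-1}+1/J_{f_m}^a)\le C$ via Young's inequality with exponents $p=\frac{(n-1)^2}{n}$, $p'=\frac{(n-1)^2}{n^2-3n+1}$), and runs a contradiction argument: if (INV) fails, after an extension step (Lemma~\ref{otravne2}, which genuinely uses the fixed boundary data $f_m=f_0$ on $\partial\Omega$ and the Lipschitz regularity of $\Omega,\Omega'$) one finds a good shape $K$ and two balls $B_U,B_V$ lying mostly in $\{\Deg(f,K,f(\cdot))\ne 0\}$ and $\{\Deg(f,K,f(\cdot))=0\}$ respectively. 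After replacing $f$ and $f_m$ on a cap decomposition of $\partial K$ by $(n{-}1)$-Dirichlet energy minimizers $g,g_m$ with matching boundary traces on the $(n{-}2)$-skeleton (Theorem~\ref{minimizers}), one shows a positive fraction of the segments from $B_U$ to $B_V$ must pass under $f_m$ through a single small set $g_m(\overline S_{j_0})$ of controllably small capacity. The Koskela--Mal\'y estimate \eqref{KM}, which converts the $L^1$ bound on $K_{f_m}^{1/(n-1)}$ into a bound on $\int|D(u\circ f_m)|$ against the $L^n$ norm of $Du$ on the image, then yields a contradiction for $\beta$ small. None of this appears in your plan, and the role of the distortion-integrability bound — which is the true reason the threshold exponent is $\frac{n-1}{n^2-3n+1}$ — is not captured by a $VMO$-continuity argument.
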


As a corollary this weak limit $f$ satisfies all the nice properties (see \cite[Section 3 and 4]{CDL}): it is 
one-to-one 
a.e., it maps disjoint balls into essentially disjoint \textcolor{black}{sets}, 
$\deg(f,B,\cdot)\in\{0,1\}$ for a.e. ball $B$, its distributional determinant equals to the absolutely continuous part $J_f$ plus a countable sum of positive multiples of Dirac measures and so on.  

In fact our result is even more general and instead of integrability of 
$1/J_f^a$ 
it is enough to assume that its distortion $K_f=|Df|^n/J_f$ is integrable with power $\frac{1}{n-1}$ (see Theorem 
\ref{distthm} below). This seems to be connected with the result of Koskela and Mal\'y \cite{KM} about the validity of Lusin $(N^{-1})$ condition for mappings of finite distortion.  

\textcolor{black}
{In our new paper \cite{DHMo} we use Theorem \ref{main} as a main step in showing that we can use Calculus of Variations approach in this context. We show that there is an energy minimizer of certain polyconvex functional in the class of weak limits of $W^{1,n-1}$ homeomorphisms. 
}

In our next result we improve the counterexample from \cite[Theorem 6.3.]{CDL}. 
Similarly to \cite{CDL} we show that the weak limit of homeomorphism\textcolor{black}{s} (that automatically satisfy (INV)) can fail to satisfy (INV). Moreover, the degree of the limit $f$ can be $-1$ on a set of positive measure even though $J_{f_m}>0$ a.e. and $\deg(f_m,B,\cdot)\in\{0,1\}$.
Let us note here that we automatically have $J_f\geq 0$ a.e. for the weak limit once $J_{f_m}>0$ a.e. (see Hencl and Onninen \cite{HO}) \textcolor{black}{and since our $\varphi$ tends to $\infty$ at $0$ we also have $J_f>0$ a.e. from Lemma 2.10 from \cite{DHMo}}. 
Primarily we show that at least in dimension $n=3$ our condition $J^{-2}_f\in L^1$ from Theorem \ref{main} is sharp (example in \cite{CDL} gives smaller integrability of $1/J_f$). We expect that our result is sharp in all dimensions but we have not pursued this as $n=3$ is the physically relevant dimension. 

\begin{thm}\label{example} Let $n=3$ and $a<2$. 
\textcolor{black}{Then there exist homeomorphisms 
$f_m$ of $\overline B(0,10)$ to $\overline B(0,10)$ such that $f_m\in W^{1,2}(B(0,10),B(0,10))$, $f_m$ is an identity mapping on $\partial B(0,10)$ with 
$J_{f_m}>0$ a.e. 
and 
$$
\sup_m \int_{\Omega}\left(|D f_m|^{n-1}+\frac{1}{(J_{f_m})^a}\right)\,dx<\infty, 
$$
whose weak limit $f$} does not satisfy the (INV) condition. 
\end{thm}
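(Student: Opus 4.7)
The proof is by an explicit construction refining the counterexample of Conti--De Lellis \cite[Thm.~6.3]{CDL}. I would fix two disjoint closed balls $\overline{B^{1}},\overline{B^{2}}\subset B(0,5)$ with centres $p_1,p_2$, join them by a thin cylindrical tube $T_{m}$ of cross-sectional radius $r_{m}\asymp 1/m$, and set $R_{m}=\overline{B^{1}}\cup T_{m}\cup\overline{B^{2}}$. On $\overline{B(0,10)}\setminus R_{m}$ take $f_{m}=\mathrm{id}$, which automatically enforces the Dirichlet condition on $\partial B(0,10)$.

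On $R_{m}$, the plan is to define $f_{m}$ as a piecewise bi-Lipschitz orientation-preserving homeomorphism realizing an approximate sphere eversion: a small sphere $S\subset B^{1}$ is partly everted as $m$ grows, the extra volume required for the eversion being transported from $B^{2}$ through $T_m$. For every $m$, $f_{m}$ is a homeomorphism, so $J_{f_{m}}>0$ a.e.\ and $\deg(f_{m},B,\cdot)\in\{0,1\}$ for every ball $B$. The eversion is however incomplete: once the tube collapses in the weak limit, the intermediate immersion degenerates into a singular one carrying degree $-1$ on a set of positive measure.

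The energy estimate proceeds as follows. Outside $T_m$, $f_m$ is bi-Lipschitz with constants independent of $m$, giving a bounded contribution. Inside $T_m$ (volume $\asymp m^{-2}$), the construction is tuned to give $|Df_m|\lesssim m$ and $J_{f_m}\gtrsim m^{-2/a}$ pointwise, so that
\[
\int_{T_m}|Df_m|^{2}\,dx\lesssim m^{2}\cdot m^{-2}=O(1),\qquad
\int_{T_m}\frac{dx}{J_{f_m}^{a}}\lesssim m^{2}\cdot m^{-2}=O(1).
\]
This is exactly where the assumption $a<2$ enters, through the slack $2/a>1$: without it the Jacobian bound cannot be achieved. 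Passing to a weakly convergent subsequence $f_m\wKonv f$ in $W^{1,2}$, one finds $f=\mathrm{id}$ outside the degenerate segment $\overline{p_1p_2}$. A pointwise a.e.\ identification of $f$ on $R_\infty$, combined with the Brezis--Nirenberg degree for $W^{1,n-1}$ traces on spheres, yields a small ball $B\subset B^{1}$ with $\deg(f,B,\cdot)=-1$ on a set of $y$'s of positive measure; since (INV) in the sense of \cite{CDL} forces $\deg(f,B,\cdot)\in\{0,1\}$, it must fail for $f$.

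The principal obstacle is the construction itself: producing a bi-Lipschitz homeomorphism $f_m$ on $R_m$ that realizes the partial sphere eversion, continuously extends to the identity on $\partial R_m$, and simultaneously achieves the precise scalings $|Df_m|\lesssim m$ and $J_{f_m}\gtrsim m^{-2/a}$ on $T_m$. A second delicate point is the rigorous identification of the weak limit together with the degree computation on generic spheres, since weak $W^{1,2}$ convergence by itself controls neither pointwise values nor boundary traces uniformly.
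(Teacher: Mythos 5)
The proposal has a genuine gap and the approach, as stated, cannot work.

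The claim that ``$f_m$ is bi-Lipschitz with constants independent of $m$'' outside the shrinking tube $T_m$ is fatal to the conclusion. For a ball $B\subset B^{1}$ whose boundary avoids the tube, uniform bi-Lipschitz constants give an equicontinuous family $f_m|_{\partial B}$, so (up to subsequence) $f_m\to f$ \emph{uniformly} on $\partial B$. Uniform convergence of continuous maps preserves the Brouwer degree: $\deg(f_m,B,y)\to\deg(f,B,y)$ for $y\notin f(\partial B)$, and since $\deg(f_m,B,\cdot)\in\{0,1\}$ for each homeomorphism $f_m$, the same holds for $f$. Thus no ball in $B^{1}$ can acquire degree $-1$ in the limit. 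Indeed, your own observation that $f=\operatorname{id}$ outside the segment $\overline{p_1p_2}$ means $f$ is the identity a.e.\ on $B^{1}$ and $B^{2}$, so (INV) holds trivially: the degeneration being confined to a zero-measure segment cannot break it. To make the degree flip, the construction must degenerate on a set of positive measure; that is what the quantitative details (the reverse-engineered bounds $|Df_m|\lesssim m$, $J_{f_m}\gtrsim m^{-2/a}$, and the role of $a<2$) are not actually providing, since they concern only the tube.

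The paper's construction is structurally different and addresses this directly. It is a rotationally symmetric map in spherical coordinates $(r,\alpha,\beta)\mapsto(\tilde r,\tilde\alpha,\beta)$, with $B(0,2)$ split into six regions separated by the curves $W$ (the pinch) and $\widetilde W$. The sphere $\partial B(0,r)$ for $r<1$ is divided into an inner part $I_r$ (which covers almost all of the sphere) and a small outer cap $O_r$ of angular width $\approx\varepsilon r$; the images are nested ``horseshoes.'' The gradient of $f_\varepsilon$ blows up on the thin regions $D_1$ and the strip $B$ as $\varepsilon\to 0$ (there is no uniform Lipschitz bound), and the exponent $p\in(\tfrac12,1)$ together with the requirement $a(1-3p)>-1$ — which is available precisely when $a<2$ — keeps both $\int|Df_\varepsilon|^2$ and $\int J_{f_\varepsilon}^{-a}$ finite through delicate local estimates in each region. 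Crucially, the limit on the positive-measure region $B(0,1)$ has $R=\tfrac{2-r}3$ \emph{decreasing} in $r$: the images of smaller spheres are larger drops, so the limit map has the sphere $\partial B(0,r)$ traversed with reversed orientation and all of $B(0,r)$ pushed to the outside of $\topi(f,B(0,r))$. That is what violates (INV). Your intuition that $a<2$ is the correct threshold and that a ``partial eversion'' is the right picture is sound; the missing idea is that the inversion has to persist on a set of positive measure in the limit (here, the entire unit ball), and the paper achieves this by the explicit dependence of $R$ on $r$ on $A_1$ together with a vanishing escape route $D_1\cup B$, not by a fixed region glued through a thin tube.
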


It was shown by Iwaniec and Onninen in \cite{IO2} that in the planar case $n=2$ the class of weak limits of homeomorphisms and the class of strong limits of homeomorphisms are the same for any $p\geq 2$. The same result for $1\leq p<2$ was later shown by De Philippis and Pratelli in \cite{DPP}. This is very useful in Calculus of Variations as we can approximate the minimizer of the energy not only in the weak but also in the strong convergence. We show that the situation is much more difficult in higher dimension and the result is not true in general. We show (see  Theorem \ref{distthm} (b) below) that \textcolor{black}{each strong limit of $W^{1,n-1}$ homeomorphisms
satisfies the (INV) condition}. Together with Theorem \ref{example} this implies the following result.

\begin{thm}\label{notthesame} Let $n=3$. There is a mapping $f\in W^{1,2}(B(0,10),B(0,10))$ which is a weak limit of 
Sobolev $\textcolor{black}{W}^{1,2}$
homeomorphisms $f_m$ 
of $\overline B(0,10)$ to $\overline B(0,10)$
with $f_m(x)=x$ on $\partial B(0,10)$ and $J_{f_m}>0$ a.e., but there are no homeomorphisms $h_m$ 
of $\overline B(0,10)$ to $\overline B(0,10)$
such that $h_m\to f$ strongly in $W^{1,2}(B(0,10),\er^3)$. 
\end{thm}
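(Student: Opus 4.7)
The plan is to derive Theorem \ref{notthesame} directly from Theorem \ref{example} combined with Theorem \ref{distthm}(b). First I would fix any exponent $a\in(1,2)$ and apply Theorem \ref{example}: this produces homeomorphisms $f_m\colon\overline{B(0,10)}\to\overline{B(0,10)}$ in $W^{1,2}$, equal to the identity on $\partial B(0,10)$, with $J_{f_m}>0$ a.e.\ and with uniformly bounded energies $\int_{B(0,10)}(|Df_m|^2+J_{f_m}^{-a})\,dx$. Since the target $B(0,10)$ is bounded, the maps $f_m$ are uniformly bounded in $L^\infty$; combined with the $L^2$ bound on $Df_m$ this gives boundedness in $W^{1,2}$, so after passing to a subsequence we obtain a weak limit $f\in W^{1,2}(B(0,10),\er^3)$ whose image lies in $\overline{B(0,10)}$. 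By Theorem \ref{example} this $f$ fails the (INV) condition, and it is exactly the mapping whose existence is claimed in Theorem \ref{notthesame}.

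Next I would prove the nonexistence of a strong approximating sequence by contradiction. Suppose there exist homeomorphisms $h_m$ of $\overline{B(0,10)}$ onto itself with $h_m\to f$ strongly in $W^{1,2}(B(0,10),\er^3)$. Then $f$ is a strong $W^{1,n-1}$ limit of Sobolev homeomorphisms with $n=3$, and Theorem \ref{distthm}(b) forces $f$ to satisfy the (INV) condition. This contradicts the conclusion of Theorem \ref{example}, so no such $h_m$ can exist.

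The main obstacle is already packaged into the two ingredients, so there is essentially no difficulty in the present argument itself. The delicate work sits, on the one hand, inside Theorem \ref{example}, which constructs a sequence of orientation-preserving homeomorphisms whose weak limit has topological degree $-1$ on a set of positive measure, and on the other hand inside Theorem \ref{distthm}(b), which shows that strong $W^{1,n-1}$ convergence preserves (INV) without any Jacobian assumption. Theorem \ref{notthesame} is then precisely the statement that the gap between the sharp weak threshold of Theorem \ref{main} (which needs $a\ge 2$ in dimension three) and the unconditional strong statement in Theorem \ref{distthm}(b) is witnessed by an actual mapping.
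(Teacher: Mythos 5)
Your argument follows the paper's proof exactly: take the $f_m$ from Theorem~\ref{example}, pass to a weakly convergent subsequence, and invoke Theorem~\ref{distthm}(b) to rule out any strongly approximating sequence of homeomorphisms. However, your closing remark that Theorem~\ref{distthm}(b) "preserves (INV) without any Jacobian assumption" is not accurate: part (b) explicitly assumes $J_f>0$ a.e.\ for the limit map. For the $f$ coming from Theorem~\ref{example} this hypothesis does hold, but it has to be checked rather than dismissed. It follows because the $f_m$ satisfy $\sup_m\int J_{f_m}^{-a}<\infty$, so $\varphi(t)=t^{-a}$ satisfies \eqref{varphi} and Lemma~\ref{jfnonzero} gives $J_f\neq 0$ a.e., while the Hencl–Onninen result (\cite{HO}) gives $J_f\geq 0$ a.e.\ for a weak limit of homeomorphisms with positive Jacobians. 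Aside from that missing verification (and the incorrect characterization of (b)), the proposal is correct and coincides with the paper's argument.
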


Again we expect that there \textcolor{black}{are similar examples} in $W^{1,n-1}$ in higher dimension. However, we do not see any simple way to generalize this counterexample to other Sobolev spaces $W^{1,p}$ for $p\neq n-1$.




\section{Preliminaries}

\subsection{Convention}
In what follows, we assume that Sobolev mappings are represented in the best way for our purposes.
For example, if we consider a trace of a Sobolev mapping $f$ on a $k$-dimensional surface $S$,
then we assume that $f$ is represented as the trace on $S$. If $f$ has a continuous representative
$\bar f$ on $S$, then we assume that $f=\bar f$ on $S$.

\subsection{Shapes \textcolor{black}{ and Figures}}
In what follows we will work with open sets of a simple geometric nature
called
\textit{shapes}. Our shapes will be of three types
\begin{enumerate}[(a)]
\item balls,
\item \textcolor{black}{full} cuboids,
\item a \textit{hollowed cuboid} is the difference 
$Q\setminus \overline B$, where $Q$ is a \textcolor{black}{full} cuboid and $B\subset\subset Q$
is a ball \textcolor{black}{(see the set $H$ on Fig. \ref{mriz} below)}.
\end{enumerate}
A \textit{figure} is the interior of a finite union of 
closed \textcolor{black}{full} cuboids with pairwise disjoint interiors; it may be disconnected.

\subsection{Boundary gradient and cofactors}
Let $K\subset\rn$ be a shape and $f$ be a smooth mapping on a neighbourhood of 
$\overline K$. 
Then we can derive a useful degree formula (cf.\ \eqref{weakdegree} below) 
involving $(\cof Df(x))\nu(x)$, 
$x\in \partial K$; here $\nu(x)$ denotes the exterior normal to $B$ at $x$
and $\cof A$ is the cofactor matrix of $A$
(\textcolor{black}{which is the transpose of 
the adjugate matrix $\adj A$ and which satisfies $\cof A\; A^T=(\det A)\mathrm{Id}$}). 
If $f$ is a Sobolev mapping only, we can find ``good shapes''
with the property that $f$ is Sobolev regular on their boundaries. 
Let $x\in\partial K$, in case of a cuboid we exclude points of edges. 
Then, instead 
of the expression $(\cof Df(x))\nu(x)$  we need a replacement relying on the tangential gradient
$D_{\tau}f$.
This can be given using some tools from the multilinear algebra, for
details see \cite{F} and \cite{MTY}.
We consider a linear subspace $\mathbb V$ of $\rn$ and a linear mapping
$L\colon \mathbb V\textcolor{black}{\to \mathbb{R}^n}$. Then the operator $\Lambda_{n-1}L$ is defined by
\begin{equation}\label{howLambda}
\Lambda_{n-1}L(\mathbf y_1\wedge\mathbf y_2\wedge\dots\wedge\mathbf y_{n-1})
=L\mathbf y_1\wedge L\mathbf y_2\wedge\dots\wedge L\mathbf  y_{n-1}.
\end{equation}
If the dimension $k$ of $\mathbb V$ is less that $n-1$, this operator is trivial. If
$k=n$, and $A$ is the matrix representing $L$, it can be shown that $\Lambda_{n-1}L$
is represented by $\cof A$. Therefore both sides of \eqref{howLambda} depend only
on values of $L$ on the linear hull of $ \mathbf y_1,\mathbf y_2,\dots\mathbf y_{n-1}$.
We may identify the wedge product with the cross product through the Hodge star operator.
Thus, in our case when $\mathbb V$ is the tangent space $T_x(\partial K)$,
$\Lambda_{n-1}T_x(\partial \textcolor{black}{K})$ is the one-dimensional space of multiples of $\nu(x)$
and $L=D_{\tau}f(x)$, 
the required expression $(\Lambda_{n-1}L)\nu$ can be computed 
(avoiding exterior algebra objects)
as $(\cof A)\nu$ where $A$ is a matrix
of any operator $\bar L\colon \rn\to\rn$ which extends $L$, i.e.\
$$
\bar L\mathbf y=L\mathbf y,\qquad \mathbf y\in T_x(\partial K).
$$
This also shows compatibility with the formula for smooth mappings where the extension
$\bar L$ appears naturally as the full gradient $Df(x)$.

\subsection{Degree for continuous mappings}


Let $\Omega\subset\rn$ be a bounded open set.
Given a \textcolor{black}{smooth map $f\colon\overline\Omega\to\rn$ and $y\textcolor{black}{_0}\in \rn\setminus f(\partial\Omega)$ such that $J_f(x)\neq 0$ for each $x\in \Omega\cap f^{-1}(y\textcolor{black}{_0})$,}
we can define the {\it topological degree} as
$$\deg(f,\Omega,y_0)=\sum_{\Omega\cap f^{-1}(y)} \operatorname{sgn}(J_f(x)).$$
By uniform approximation, 
this definition can be extended to an arbitrary continuous mapping 
$f\colon\overline\Omega\to\rn$ \textcolor{black}{and $y_0\in  \rn\setminus f(\partial\Omega)$}. Note that the degree depends only on
values of $f$ on $\partial \Omega$.

If $f\colon \overline\Omega\to\rn$ is a homeomorphism,
then either $\deg (f,\Omega,y)=1$ for all $y\in f(\Omega)$
($f$ is \textit{sense preserving}), or 
$\deg (f,\Omega,y)=-1$ for all $y\in f(\Omega)$
($f$ is \textit{sense reversing}). If, in addition,
$f\in W^{1,n-1}(\Omega,\rn)$, then this topological orientation
corresponds to the sign of the Jacobian. More precisely, we have

\begin{prop}[\cite{HM}]\label{p:top=anal} 
Let $f\in W^{1,n-1}(\Omega,\rn)$ be 
a homeomorphism on $\overline\Omega$ with $J_f>0$ a.e. Then 
$$
\deg(f,\Omega,y)=1,\qquad y\in f(\Omega).
$$
\end{prop}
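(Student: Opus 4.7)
The plan is to argue by contradiction. Since $f$ is a homeomorphism of $\overline\Omega$, classical degree theory (recalled just before the proposition) tells us $\deg(f,\Omega,\cdot)$ is constant on $f(\Omega)$ and equals $+1$ or $-1$; we must exclude the value $-1$. Exactly the same dichotomy applies to any sub-shape. So by Fubini we may pick a ball $B=B(x_0,r)\subset\subset\Omega$ whose boundary carries a Sobolev trace of $f$ (valid for a.e.\ $r$ since $f\in W^{1,n-1}$), and assume toward contradiction that $\deg(f,B,y)=-1$ for all $y\in f(B)$.

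The heart of the argument is the $W^{1,n-1}$ degree--Jacobian identity: for any test function $\eta\in C_c^\infty(\rn\setminus f(\partial B))$,
$$
\int_B (\eta\circ f)(x)\,J_f(x)\,dx \;=\; \int_{\rn}\eta(y)\,\deg(f,B,y)\,dy.
$$
For smooth maps this is the classical change of variables. In our setting it is obtained by integrating the left-hand side by parts using the cofactor matrix: since $\cof Df$ is a product of $(n-1)$ entries of $Df$, it lies in $L^1_{\mathrm{loc}}$, so the interior integration by parts makes sense and the resulting boundary expression on $\partial B$ involves $(\cof D_\tau f)\nu$, precisely the tangential object from the subsection on boundary gradients and cofactors. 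Matching this boundary integral with the Brezis--Nirenberg degree of $f|_{\partial B}$ (which is the trace of a $W^{1,n-1}$ map on a sphere, exactly the class for which their degree is defined) yields the claimed identity.

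Given the identity, pick any non-negative $\eta\not\equiv 0$ supported in $f(B)$. The right-hand side equals $-\int_{f(B)}\eta<0$ by hypothesis, while the left-hand side is strictly positive: $f$ is a homeomorphism so $\eta\circ f$ is non-negative and non-trivial on $B$, and $J_f>0$ a.e.\ on $\Omega$. This contradiction forces $\deg(f,B,\cdot)=+1$ on $f(B)$, and since $B$ was an arbitrary good sub-ball and the degree on $\Omega$ is constant, $\deg(f,\Omega,y)=1$ for every $y\in f(\Omega)$.

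The main obstacle is the rigorous proof of the degree--Jacobian identity at the $W^{1,n-1}$ threshold, because $f$ need not be differentiable a.e.\ and the pointwise area formula with multiplicity is unavailable. The Hencl--Mal\'y strategy bypasses pointwise differentiability entirely by working with integrated quantities on shapes with good spherical traces, combining the cofactor integration by parts with the Brezis--Nirenberg distributional degree on $\partial B$; this reduces everything to the Sobolev trace inequalities and the cofactor $L^1$ bound noted above.
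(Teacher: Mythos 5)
The paper does not prove this proposition; it is imported from \cite{HM}, so there is no in-paper argument to compare against. Judged on its own, your proof has a genuine gap: the degree--Jacobian identity
$$
\int_B (\eta\circ f)\,J_f\,dx=\int_{\rn}\eta(y)\,\deg(f,B,y)\,dy
$$
is \emph{false} at the $W^{1,n-1}$ threshold, even for homeomorphisms with $J_f>0$ a.e. Letting $\eta$ increase to $\chi_{f(B)}$ on a good ball (where $|f(\partial B)|=0$), it would force $\int_B J_f=\deg\cdot|f(B)|$; combined with the one-sided area inequality $\int_B|J_f|\le|f(B)|$ this forces equality in the area formula, i.e.\ Lusin's condition $(N)$. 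But there exist $W^{1,p}$ homeomorphisms for every $p<n$ (Ponomarev-type constructions, identity on the boundary so $\deg=1$) with $J_f>0$ a.e.\ that map a null set onto a set of positive measure, hence $\int_B J_f<|f(B)|$, and the identity fails for them. Equivalently, the distributional Jacobian of a $W^{1,n-1}$ homeomorphism can have a strictly positive singular part, so the interior integration by parts you propose does not return $(\eta\circ f)J_f$ on the left; the $L^1$ bound on $\cof Df$ only makes both sides well-defined, it does not make them equal, and matching the boundary term with the Brezis--Nirenberg degree identifies the \emph{distributional} Jacobian with the degree, not the pointwise one.

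The attribution of this identity to the Hencl--Mal\'y strategy is therefore a mischaracterization: \cite{HM} proves the sign relation precisely \emph{without} an interior distributional Jacobian identity, which they know to be unavailable at this exponent. Their argument stays on the boundary: one uses the sphere degree formula \eqref{weakdegree} on a.e.\ sphere $\partial B(x_0,r)$, integrates over $r$ via a Fubini/coarea argument, and exploits that at points of approximate differentiability the tangential cofactor $(\Lambda_{n-1}D_\tau f)\nu$ agrees with $(\cof Df)\nu$, so that a sign comparison between $J_f$ and the degree is extracted from boundary data alone, never asserting absolute continuity of $\operatorname{Det}Df$. To repair your argument you would need to replace the false interior identity with such a boundary-only comparison.
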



\subsection{Degree for $W^{1,n-1}\cap L^{\infty}$ mappings}\label{degree}

If $K$ is a shape or a figure, 
$f\in W^{1,n-1}(\partial K)\cap C(\partial K)$, $|f(\partial K)|=0$, 
and $\ue\in C^1(\rn,\rn)$, then
\begin{equation}\label{weakdegree}
\int_{\rn}\deg(f,K,y)\operatorname{div} \ue(y)\; dy=
\int_{\partial K} (\ue\circ f)\cdot (\Lambda_{n-1} D_{\tau}f)\nu\; d\haus,
\end{equation}
see \cite[Proposition 2.1]{MS}.

Let $\mathcal M(\rn)=C_0(\rn)^*$ be the space of all signed Radon measures on $\rn$.
By \eqref{weakdegree} we see that $\deg(f,K,\cdot)\in BV(\rn)$ and
\begin{equation}\label{BVest}
\|D\deg(f,K,\cdot)\|_{\mathcal M(\rn)}\le 
C\|\Lambda_{n-1} D_{\tau}f\|_{L^1(\partial K)}\le C\|D_{\tau}f\|_{L^{n-1}(\partial K)}^{n-1}.
\end{equation}

Following \cite{CDL} (see also \cite{BN}) we need a more general version 
of the degree on 
the boundary of a shape 
which works for mappings in $W^{1,n-1}\cap L^{\infty}$ 
that are not necessarily continuous. 
Although only the three dimensional case 
on balls 
is discussed
on \cite{CDL}, the arguments pass in the general case as well. The definition is in fact based 
on \eqref{weakdegree}.

\begin{definition}
Let $K\subset\rn$ be a shape and let $f\in W^{1,n-1}(\partial K,\rn)\cap L^{\infty}(\partial K,\rn)$. Then we define 
$\Deg(f, K, \cdot)$ as the distribution satisfying
\eqn{qqq}
$$
\int_{\rn}\Deg(f,K,y)\psi(y)\; dy=
\int_{\partial K} (\ue\circ f)\cdot(\Lambda_{n-1} D_{\tau}f) \nu\; d\haus
$$
for every test function $\psi\in C_c^{\infty}(\rn)$ 
and every 
$C^{\infty}$ vector field $\ue$ on $\rn$ satisfying $\div \ue=\psi$.
\end{definition}

As in \cite{CDL} it can be verified that the right hand side does not depend on the way 
\textcolor{black}{$\psi$} is expressed as $\div \ue$. Indeed, this works if $f$ is smooth.
If we approximate $f$ by a sequence $(f_m)_m$ of smooth functions in the usual mollification way,
then $\ue\circ f_m\to \ue\circ f$ weakly* in $L^{\infty}$, $\Lambda_{n-1} D_{\tau}f_m\to \Lambda_{n-1} D_{\tau}f$
strongly in $L^1$, hence the right hand side of \eqref{weakdegree} converges well. In fact,
the distribution $\Deg(f, K,\cdot)$ can be represented as a $BV$ function by the following lemma.

\begin{lemma}\label{l:strongdeg}
Let $K$ be a shape.
Let $(f_m)_m$ be a sequence of 
continuous Sobolev mappings 
\textcolor{black}{which converges} to a limit function $f$
strongly in $W^{1,n-1}(\partial K,\rn)$ and \textcolor{black}{is bounded} in $L^{\infty}(\partial K,\rn)$. Then
$\Deg (f, K,\cdot)$ is an integer valued function in $BV(\rn)$ and 
$\deg(f_m, K,\cdot)\to \Deg(f, K,\cdot)$ strongly in $L^1(\rn)$.
\end{lemma}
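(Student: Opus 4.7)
\medskip

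\noindent\textbf{Proof plan.}
The plan is to view each $\deg(f_m,K,\cdot)$ as a $BV$ function with a uniform bound, extract an $L^1$ limit by compactness, identify that limit as $\Deg(f,K,\cdot)$ via the defining formula \eqref{qqq}, and finally argue that the whole sequence converges by uniqueness.

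First I would record the $BV$ bound. Since $(f_m)_m$ converges to $f$ strongly in $W^{1,n-1}(\partial K,\rn)$, the tangential gradients satisfy $\sup_m\|D_{\tau}f_m\|_{L^{n-1}(\partial K)}<\infty$. By \eqref{BVest} applied to each continuous Sobolev map $f_m$, the sequence $\deg(f_m,K,\cdot)$ is therefore uniformly bounded in $BV(\rn)$. Moreover, because $(f_m)_m$ is uniformly bounded in $L^\infty(\partial K,\rn)$, there is a single ball $B_R\subset\rn$ such that $f_m(\partial K)\subset B_R$ for all $m$; as the degree of a continuous map vanishes on the unbounded component of the complement of its image, each $\deg(f_m,K,\cdot)$ vanishes outside $B_R$.

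Next, by the $BV$ compactness theorem (Rellich type), some subsequence $\deg(f_{m_k},K,\cdot)$ converges in $L^1_{\text{loc}}(\rn)$, and the uniform support in $B_R$ upgrades this to strong convergence in $L^1(\rn)$ to some $g\in BV(\rn)$. Passing once more to a subsequence so that the convergence is also pointwise a.e., and noting that each $\deg(f_m,K,\cdot)$ is integer-valued, we conclude that $g$ is integer-valued a.e.

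The third step is to identify $g$ with the distribution $\Deg(f,K,\cdot)$. Fix $\psi\in C_c^\infty(\rn)$ and $\ue\in C^\infty(\rn,\rn)$ with $\div\ue=\psi$. By $L^1$ convergence,
$$
\int_{\rn}\deg(f_{m_k},K,y)\psi(y)\,dy\longrightarrow \int_{\rn}g(y)\psi(y)\,dy.
$$
For the right-hand side of \eqref{weakdegree}, use pointwise a.e.\ convergence of $f_{m_k}$ (extracted from strong $L^{n-1}$ convergence on $\partial K$) together with the uniform $L^\infty$ bound and smoothness of $\ue$ to get $\ue\circ f_{m_k}\to \ue\circ f$ strongly in every $L^q(\partial K)$, $q<\infty$, by dominated convergence. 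For the cofactor term I expand the multilinear difference
$$
\Lambda_{n-1}D_\tau f_{m_k}-\Lambda_{n-1}D_\tau f
$$
as a telescopic sum whose typical summand is controlled by $|D_\tau f_{m_k}|^{j}|D_\tau f_{m_k}-D_\tau f|\,|D_\tau f|^{n-2-j}$; applying H\"older with exponents $(n-1)/j$, $n-1$, $(n-1)/(n-2-j)$ and the strong $L^{n-1}$ convergence of $D_\tau f_{m_k}$ shows $\Lambda_{n-1}D_\tau f_{m_k}\to \Lambda_{n-1}D_\tau f$ strongly in $L^1(\partial K)$. Passing to the limit in \eqref{weakdegree} for $f_{m_k}$ gives exactly the identity defining $\Deg(f,K,\cdot)$ in \eqref{qqq}, so $g=\Deg(f,K,\cdot)$ as distributions, hence as functions.

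Finally, this identification shows that every $L^1$-subsequential limit of $\deg(f_m,K,\cdot)$ equals the same function $\Deg(f,K,\cdot)$; combined with the uniform $BV$ and support bound, a standard subsequence argument upgrades convergence from subsequences to the whole sequence. The main delicate point I anticipate is the strong $L^1$ convergence of the $(n-1)$-linear expression $\Lambda_{n-1}D_\tau f_m$, since strong convergence in $L^{n-1}$ does not automatically persist under nonlinear operations; but the telescopic expansion plus H\"older handles it cleanly. Everything else is compactness and bookkeeping.
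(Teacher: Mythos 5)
Your proof is correct and follows essentially the same route as the paper's: the $BV$ bound from \eqref{BVest}, strong $L^1$ convergence of $\Lambda_{n-1}D_\tau f_m$ via multilinearity, weak-type convergence of $\ue\circ f_m$ to pass to the limit in \eqref{weakdegree}, and the $BV$ compactness plus uniform $L^\infty$ bound to upgrade to $L^1(\rn)$ convergence and deduce integer-valuedness. The only difference is presentational: you first extract an $L^1$-convergent subsequence and then identify it, with an explicit uniqueness step, while the paper first identifies the distributional limit and then upgrades; and you spell out the telescopic/H\"older argument for the cofactor convergence, which the paper cites implicitly.
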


\begin{proof}
Let $\psi$ be a smooth test function and $\ue$ be a smooth vector field such that $\div\ue=\psi$.
As above we observe that $\ue\circ f_m\to \ue\circ f$ weakly* in $L^{\infty}$ and $\Lambda_{n-1} D_{\tau}f_m\to \Lambda_{n-1} D_{\tau}f$
strongly in $L^1$. Hence we observe that $\deg(f_m, K,\cdot)\to \Deg(f, K,\cdot)$ in distributions.
By \eqref{BVest}, the sequence $\deg (f_m,\opartial K,\cdot)$ is bounded in $BV(\rn)$, 
so that the limit is in $BV$ as well and the convergence is weak* in $BV$.
By the compact embedding and \textcolor{black}{the} $L^{\infty}$ bound of $f_m$
we have $\deg (f_m,\opartial K,\cdot)\to \Deg (f,K,\cdot)$ in $L^1(\rn)$.
\textcolor{black}{It also} follows that $\Deg (f,\opartial K,\cdot)$ is integer valued.
\end{proof}

\begin{remark}
Let $K$ be a shape and $f\in W^{1,n-1}(\partial K)\cap C(\overline K)$.
If $|f(\partial K)|=0$, then $\Deg(f,K,y)=\deg(f,K,y)$ for 
a.e.\  $y\in\rn$. We use different symbols to distinguish and emphasize that
$\deg$ is defined pointwise on $\rn\setminus f(\partial K)$, whereas
$\Deg$ is determined only up to a set of measure zero.
\end{remark}

Assume that $f,g\in W^{1,n-1}(\partial K,\rn)\cap L^{\infty}(\partial K,\rn)$. 
From the embedding of $BV$ spaces into $L^{\frac{n}{n-1}}$ \cite[Theorem 3.47]{AFP}, \textcolor{black}{the} definition of BV norm  \cite[Definition 3.4]{AFP} and \eqref{qqq} (note that by approximation it must hold also for 
$\psi=\div\ue$, $\ue\in C_0^1(\rn)$) we obtain 
\eqn{odhad}
$$
\begin{aligned}
\bigl|\bigl\{y\in\rn:\ &\Deg(f,\opartial K,y)\neq \Deg(g,\opartial K,y)\bigr\}\bigr|^{\frac{n-1}{n}}
\leq \|\Deg(f,\opartial K,\cdot)-\Deg(g,\opartial K,\cdot)\|_{L^{\frac{n}{n-1}}}\\
&\leq C\bigl\|D (\Deg(f,\opartial K,\cdot)-\Deg(g,\opartial K,\cdot))\bigr\|_{\mathcal{M}(\rn)}\\
&=C\sup\Bigl\{\int_{\rn} \left(\Deg(f,\opartial K,y)-\Deg(g,\opartial K,y)\right) 
\operatorname{div} \ue(y)\; dy:\\
&\phantom{aaaaaaaaaaaaaaaaaaaaaaaaaaaaa}  \ue\in C^1_0(\rn),\ \|\ue\|_{L^{\infty}}\le 1 \Bigr\}\\
&\leq C \int_{\partial K\cap\{f\neq g\}}\bigl( |D_{\tau}f(x)|^{n-1}+|D_{\tau}g(x)|^{n-1}\bigr)d\haus(x).\\
\end{aligned}
$$

\subsection{(INV) condition}
Analogously to \cite{CDL} (see also \cite{MS}) we define the (INV) class.

\begin{definition}\label{image} 
Let $B\subset\rn$ be a ball and let $f\in W^{1,n-1}(\partial B,\rn)\cap L^{\infty}(\partial B,\rn)$. 
We define the topological image of $B$ under $f$, $\topi(f,B)$ 
as the set of all points where the density of the set $\{y\in \rn:\ \Deg(f,\opartial B,y)\neq 0\}$
is one. 
\end{definition}

\begin{definition}\label{inv} 
Let $f\in W^{1,n-1}(\Omega,\rn)\cap L^{\infty}(\Omega,\rn)$. We say that $f$ satisfies (INV) in the ball 
$B\subset\subset\Omega$ 
if 
\begin{enumerate}[(i)]
  \item its trace on $\partial B$ is in $W^{1,2}\cap L^{\infty}$;
	\item $f(x)\in \topi (f,B)$ for a.e.\ $x\in B$;
	\item $f(x)\notin \topi (f,B)$ for a.e.\ $x\in\Omega\setminus B$.
\end{enumerate}
We say that $f$ satisfies (INV) if for every $a\in\Omega$ there is $r_a>0$ such that for $\mathcal{H}^1$-a.e. $r\in (0,r_a)$ it satisfies (INV) in $B(a,r )$. 
\end{definition}

\begin{remark}\label{r:inv}
If $f$, in addition, satisfies $J_f>0$ a.e., then preimages of sets of zero measure
have zero measure and thus we can characterize the (INV) condition in a simpler way.
Namely, such a mapping satisfies the (INV) condition in the ball $B\subset\subset\Omega$
if and only if
\begin{enumerate}[(i)] 
  \item its trace on $\partial B$ is in $W^{1,2}\cap L^{\infty}$;
	\item $\Deg(f,B,f(x))\textcolor{black}{\neq 0}$ for a.e.\ $x\in B$;
	\item $\Deg(f,B,f(x))=0$ for a.e.\ $x\in\Omega\setminus B$.
\end{enumerate}
\end{remark}

\subsection{Estimates  of measure of preimages}\label{area}\

\begin{lemma}\label{Ninv} 
Let $\Omega\subset\rn$ be an open set of finite measure
and $f\in W_{\loc}^{1,1}(\Omega;\rn)$ satisfy $J_f\neq 0$ a.e. Then
for every $\ep>0$ there is $\delta>0$ such that for every measurable set $F\subset\rn$ we have
$$
|F|<\delta\implies |f^{-1}(F)|<\ep.
$$
\end{lemma}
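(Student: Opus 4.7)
The strategy is to view $\mu(F):=|f^{-1}(F)|$ as a finite Borel measure on $\rn$ and to reduce the statement to the classical $\ep$--$\delta$ characterization of absolute continuity for finite measures. Since $\mu(\rn)=|\Omega|<\infty$ and $\mu$ is countably additive (inherited from $\mira$), it suffices to prove the absolute continuity $\mu\ll\mira$; in other words, the Lusin $(N^{-1})$ property of $f$: $|F|=0 \Rightarrow |f^{-1}(F)|=0$.

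To establish $(N^{-1})$, I would decompose $\Omega$ modulo a null set as the increasing union $\bigcup_{k=1}^{\infty}\Omega_k$ with
\[
\Omega_k = \Bigl\{x\in\Omega:\tfrac1k<|J_f(x)|<k\Bigr\}.
\]
The residual set is null because $J_f\ne 0$ a.e.\ by assumption and $|J_f|<\infty$ a.e.\ (since $J_f\in L^1_\loc$). Mappings in $W^{1,1}_\loc$ are approximately differentiable a.e., so on each $\Omega_k$ the classical area formula
\[
\int_A |J_f(x)|\,dx = \int_{\rn}\#\bigl(A\cap f^{-1}(y)\bigr)\,dy
\]
holds for every measurable $A\subset\Omega_k$; the right-hand side is finite for $A=\Omega_k$ since $|J_f|\le k$ and $|\Omega|<\infty$. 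Given a Lebesgue null set $F\subset\rn$, applying the formula with $A=f^{-1}(F)\cap\Omega_k$ makes the right-hand side vanish (integration of a measurable nonnegative function over a null set), hence $\int_A |J_f|\,dx=0$. Combined with the pointwise lower bound $|J_f|>\tfrac1k$ on $\Omega_k$ this forces $|f^{-1}(F)\cap\Omega_k|=0$, and summation in $k$ yields $|f^{-1}(F)|=0$.

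With $(N^{-1})$ established, the conclusion is immediate: by the Radon--Nikodym theorem the finite measure $\mu$ has a density $g=d\mu/d\mira\in L^1(\rn)$, and the ordinary absolute continuity of the Lebesgue integral of $g$ provides the required $\delta>0$ so that $|F|<\delta$ implies $\mu(F)=\int_F g\,d\mira<\ep$.

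The only delicate point in this argument is the invocation of the area formula for general $W^{1,1}_\loc$ mappings on the bounded-Jacobian pieces $\Omega_k$. This is by now standard (see e.g.\ the Hencl--Koskela monograph or Mal\'y's work) and rests on a.e.\ approximate differentiability together with a Federer--Whitney type Lipschitz decomposition of $\Omega_k$; once one accepts this tool, every other step is either a routine measure-theoretic decomposition or a direct application of the bound $\frac1k<|J_f|<k$.
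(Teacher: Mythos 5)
Your proof is correct and in substance the same as the paper's: both hinge on the area formula for a.e.\ approximately differentiable $W^{1,1}_{\loc}$ mappings to establish that the preimage of a Lebesgue null set is null (the $(N^{-1})$ property), and then upgrade this to the $\ep$--$\delta$ statement by a soft finite-measure argument. You phrase the upgrade by citing the absolute-continuity characterization via Radon--Nikodym, whereas the paper unpacks exactly that argument as a contradiction with $E=\limsup_j F_j$; your $\Omega_k$-decomposition is harmless but not needed, since $\int_A|J_f|\,dx=0$ together with $J_f\ne 0$ a.e.\ on $A$ already forces $|A|=0$.
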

\begin{proof}
Assume for contradiction that there are $\epsilon>0$ and $F_j$ with $|F_j|<\frac{1}{2^j}$ such that
$|f^{-1}(F_j)|\ge \ep$, $j=1,2,\dots$. Then the set
$$
E:=\bigcap_{k=1}^{\infty}\bigcup_{j=k}^{\infty}F_j
\text{ with }f^{-1}(E)=\bigcap_{k=1}^{\infty}\bigcup_{j=k}^{\infty}f^{-1}(F_j)
$$
satisfies $|E|=0$ but $|f^{-1}(E)|\ge \ep$.
We can find a set $A\subset f^{-1}(E)$ of full measure such that $J_f\neq 0$ on $A$ and such that change of variables formula 
$$
\int_A |J_f(x)|\; dx=\int_{f(A)}N(f,\Omega,y)\; dy. 
$$
holds on $A$ (see \cite{F} or the proof of \cite[Theorem A.35]{HK} for $\eta=\chi_{f(A)}$).
Now the left hand is positive as $J_f\neq 0$ and $|A|>0$ and the right  side is zero as $|f(A)|\subset E$ and $|E|=0$. This gives us a contradiction.  
\end{proof}

Let $\Omega\subset\rn$ be open, $A\subset\Omega$ be measurable and let $g\in W_{\loc}^{1,1}(\Omega;\rn)$
be one-to-one. 
Without any additional assumption we have (see e.g.\  \cite[Theorem A.35]{HK} for $\eta=\chi_{g(A)}$)
\eqn{area1}
$$
\int_{A}|J_g(x)| dx\le |g(A)|.
$$

\begin{lemma}\label{l:reverse}
Given $C_1<\infty$, 
there exists a function $\Phi\colon (0,\infty)\to(0,\infty)$ with 
$$
\lim_{s\to 0^+}\Phi(s)=0
$$
such that \textcolor{black}{the following holds}: Let $g\in W^{1,n-1}(\Omega,\rn)$ be a one-to-one mapping with $\|g\|_{L^{\infty}}\leq C_1$ and $\F(g)\le C_1$, where \textcolor{black}{$\F$ is as in \eqref{energy_def} with $\varphi$ satisfying} \eqref{varphi}. Then for each measurable set $A\subset\Omega$ we have
\begin{equation}\label{reverse}
\Phi(|A|)\le |g(A)|.
\end{equation}
\end{lemma}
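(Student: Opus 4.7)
The plan is to combine the area-formula inequality \eqref{area1} with a Chebyshev-type bound on the Jacobian coming from the energy constraint. The $L^\infty$ bound on $g$ plays no role in what follows; the key ingredients are injectivity and the coercivity of $\varphi$ at $0$.

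First, since $\varphi(t)=\infty$ for $t\le 0$ by \eqref{varphi}, the assumption $\F(g)\le C_1$ forces $J_g>0$ almost everywhere, so $|J_g|=J_g$ on $\Omega$. Because $g$ is one-to-one and $W^{1,n-1}$, \eqref{area1} gives
$$
|g(A)|\ge \int_A J_g(x)\, dx
$$
for every measurable $A\subset\Omega$, and the task reduces to bounding this integral from below in terms of $|A|$ alone.

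For a free parameter $t_0>0$ introduce the non-increasing envelope $\mu(t_0):=\inf_{s\in(0,t_0]}\varphi(s)$. The limit $\varphi(s)\to\infty$ as $s\to 0^+$ guarantees that $\mu(t_0)$ is finite for every $t_0>0$ and $\mu(t_0)\to\infty$ as $t_0\to 0^+$. Since $\varphi(J_g(x))\ge \mu(t_0)$ whenever $J_g(x)\le t_0$, Chebyshev's inequality and $\int_\Omega \varphi(J_g)\le C_1$ yield
$$
\bigl|A\cap\{J_g<t_0\}\bigr|\le \frac{1}{\mu(t_0)}\int_\Omega \varphi(J_g)\,dx\le \frac{C_1}{\mu(t_0)},
$$
and therefore
$$
\int_A J_g\, dx\ge t_0\,\bigl|A\cap\{J_g\ge t_0\}\bigr|\ge t_0\Bigl(|A|-\frac{C_1}{\mu(t_0)}\Bigr).
$$

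It remains to optimize $t_0$ in terms of $s:=|A|$. Define
$$
t_0(s):=\sup\Bigl\{t>0:\ \mu(t)\ge \tfrac{2C_1}{s}\Bigr\},
$$
which is strictly positive for every $s>0$ (take $t$ small), tends to $0$ as $s\to 0^+$, and satisfies $\mu(t)\ge 2C_1/s$ for every $t<t_0(s)$. Substituting such $t$ into the previous display and letting $t\uparrow t_0(s)$ gives
$$
|g(A)|\ge \tfrac{1}{2}\,s\, t_0(s)=:\Phi(s),
$$
which is positive on $(0,\infty)$ and satisfies $\Phi(s)\to 0$ as $s\to 0^+$. The only mild technical point is that $\varphi$ is not assumed monotone near $0$, which is why we cannot invert $\varphi$ directly and work instead with its monotone envelope $\mu$; this pseudo-inversion is the main structural move in the argument, and everything else is routine.
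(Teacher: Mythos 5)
Your argument is correct and takes a genuinely different route from the paper. Where you apply a Chebyshev-type level-set estimate $\bigl|\{J_g<t_0\}\bigr|\le C_1/\mu(t_0)$ and then throw away the small-Jacobian part of $A$, the paper instead applies Jensen's inequality directly, $\varphi\bigl(\vint_A J_g\bigr)\le \vint_A\varphi(J_g)\le C_1/|A|$, and inverts $\varphi$ on an interval $(0,t_0)$ where it is decreasing to get a lower bound on $\vint_A J_g$. The paper's route uses the convexity of $\varphi$ in an essential way (Jensen) and the fact that convexity together with $\varphi(0^+)=\infty$ forces $\varphi$ to be decreasing near $0$; your route uses only positivity and coercivity at $0$, so it is in principle slightly more general and more elementary, at the cost of a somewhat rougher constant. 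Two small points to tidy up: (i) your closing remark that "$\varphi$ is not assumed monotone near $0$" is not accurate — a positive convex function on $(0,\infty)$ that blows up at $0^+$ is automatically decreasing near $0$ (its right derivative must tend to $-\infty$), which is exactly what the paper exploits — although your envelope $\mu$ is harmless and sidesteps the issue; (ii) as written, $t_0(s)$ can be $+\infty$ when $\inf\varphi>0$ and $s\ge 2C_1/\inf\varphi$, so $\Phi$ is not literally a map $(0,\infty)\to(0,\infty)$. This is cosmetic: such $s$ cannot equal $|A|$ for any $A\subset\Omega$ admitting a $g$ with $\F(g)\le C_1$ (since then $\int_\Omega\varphi(J_g)\ge (\inf\varphi)\,|\Omega|\ge(\inf\varphi)\,s>C_1$), so simply capping $\Phi$ there, e.g.\ $\Phi(s):=\min\{\tfrac12\,s\,t_0(s),\,1\}$, fixes the definition without affecting the conclusion.
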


\begin{proof}
Choose $t_0>0$ such that $\ff$ is decreasing on $(0,t_0)$ and 
write $t=\ff_L^{-1}(s)$ if 
\begin{equation}\label{partialinv}
\ff(t)=s \text{ and }0<t<t_0. 
\end{equation}
Then, either $\frac{|g(A)|}{|A|}\ge t_0$, or we use that $\ff$ is decreasing on $(0,t_0]$, area formula  \eqref{area1} and the Jensen inequality to obtain 
$$
\ff\Big(\frac{|g(A)|}{|A|}\Big)\leq \ff\Big(\frac{\int_A|J_g|}{|A|}\Big)
\le \vint_{A}\ff(J_g)\,dx\le \frac{C_1}{|A|}.
$$
This implies that 
\begin{equation}\label{alphacomp}
|g(A)|\ge \Phi(|A|), 
\end{equation} 
where
$$
\Phi(s)=
\begin{cases}
s\ff_L^{-1}\Big(\frac{C_1}{s}\Big),& s< \frac{C_1}{\ff(t_0)} ,\\
t_0s,& s\ge \frac{C_1}{\ff(t_0)} ,
\end{cases}
$$
and $\ff_L^{-1}$ is the left partial inverse function defined by \eqref{partialinv}.

\end{proof}

\textcolor{black}
{
We need the following observation from \cite[Lemma 2.3]{DHMo} to show that the limit mapping in Theorem \ref{main} satisfies $J_f\neq 0$.
\begin{lemma}\label{jfnonzero}
Let $\Omega\subset\rn$ be open, and let $f_k\in W^{1,1}(\Omega,\rn)$ be a sequence of homeomorphisms with $J_{f_k}>0$ a.e.\ such that $f_k\to f\in W^{1,1}(\Omega,\rn)$ pointwise a.e. Assume further that
$$
\sup_k\int_{\Omega}\varphi(J_{f_k}(x))\; dx<\infty,
$$ 
where $\varphi$ satisfies \eqref{varphi}. Then $J_f\neq 0$ a.e. 
\end{lemma}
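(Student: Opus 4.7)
My plan is to argue by contradiction: suppose that
$$E:=\{x\in\Omega:J_f(x)=0\}$$
has positive Lebesgue measure (the one-sided bound $J_f\ge 0$ a.e.\ being the Hencl--Onninen fact already noted in the paper). The strategy combines the area formula for the homeomorphisms $f_k$, the blow-up of $\varphi$ at the origin, and a limit passage along the pointwise convergence.

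The first ingredient, essentially the argument of Lemma \ref{l:reverse} transplanted to the present setting, is a uniform lower bound on image measures: since each $f_k$ is a homeomorphism with $J_{f_k}>0$ a.e., the area formula gives $|f_k(A)|=\int_A J_{f_k}$ for every measurable $A\subset\Omega$. Jensen's inequality applied on the decreasing branch of $\varphi$ then produces a function $\Phi:(0,\infty)\to(0,\infty)$, independent of $k$, such that $|f_k(A)|\ge\Phi(|A|)$. The second ingredient is uniform non-degeneracy: because $\lim_{t\to 0^+}\varphi(t)=\infty$, Chebyshev's inequality applied to $\sup_k\int_\Omega\varphi(J_{f_k})\le C$ yields, for every $\varepsilon>0$, some $\delta=\delta(\varepsilon)>0$ with $|\{x\in\Omega:J_{f_k}(x)<\delta\}|<\varepsilon$ uniformly in $k$.

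Assuming $|E|>0$, by Lusin's and Egorov's theorems I would select a compact $K\subset E$ with $|K|\ge|E|/2$ on which $f$ is continuous and $f_k\to f$ uniformly. Applying the uniform non-degeneracy with $\varepsilon=|K|/2$, the sets $K_k:=K\cap\{J_{f_k}\ge\delta\}$ satisfy $|K_k|\ge|K|/2$ uniformly in $k$, and the area formula for the homeomorphism $f_k$ gives
$$|f_k(K)|\;\ge\;|f_k(K_k)|\;=\;\int_{K_k}J_{f_k}\,dx\;\ge\;\tfrac12\delta|K|.$$
The uniform convergence on $K$ places $f_k(K)$ in any preassigned $\eta$-neighbourhood of the compact set $f(K)$ for all large $k$; letting $\eta\to 0^+$ yields
$$|f(K)|\;\ge\;\tfrac12\delta|K|\;>\;0.$$

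The final step --- and the principal obstacle --- is to contradict $|f(K)|>0$ with $J_f\equiv 0$ on $K$. For a generic $W^{1,1}$ mapping this is not automatic, since the relevant equality in the area formula requires a Lusin $(N)$-type property. I would close the argument by exploiting that $f$ is a pointwise limit of homeomorphisms with controlled $\varphi$-energy: on the approximate-differentiability set of $K$, a Federer-style argument combined with a Lusin $(N)$ property transferred from the approximating sequence on sufficiently ``good'' subsets forces $|f(K)|=\int_K|J_f|\,dx=0$, contradicting the previous bound. This last transfer of Lusin-type regularity to the limit is where the principal technical work lies.
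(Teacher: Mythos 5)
The proposal assembles the right ingredients (area formula and Chebyshev for the $f_k$, Egorov for the limit passage) but puts them together in the wrong order, and as a result it runs into a gap that you yourself flag at the end: you derive $|f(K)|\ge\tfrac12\delta|K|>0$ and then try to contradict this with $J_f\equiv 0$ on $K$. That final step really does require a Lusin $(N)$-type property for the \emph{limit} map $f$, which is not available for a general $W^{1,1}$ map and which you do not establish. The sentence about ``transferring Lusin-type regularity from the approximating sequence'' is exactly the missing proof; as written, the argument does not close.

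The fix is to invert the roles: do not try to show that $|f(K)|$ is large, but rather choose $K$ so that $|f(K)|=0$ is \emph{automatic}, and let the uniform convergence push $f_k(K)$ into arbitrarily small neighbourhoods of $f(K)$, contradicting the uniform lower bound $|f_k(K)|\ge c>0$. Concretely: $f\in W^{1,1}$ is approximately differentiable a.e., so by Federer's theorem one can write $\Omega=N\cup\bigcup_i\Omega_i$ with $|N|=0$ and $f|_{\Omega_i}$ Lipschitz for each $i$. If $E=\{J_f=0\}$ had positive measure, some $|E\cap\Omega_i|>0$; by Egorov pick a compact $K\subset E\cap\Omega_i$ of positive measure on which $f_k\to f$ uniformly. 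Since $f|_K$ is Lipschitz and $J_f=0$ a.e.\ on $K$, the area formula for Lipschitz maps (which do satisfy $(N)$) gives $|f(K)|\le\int_K|J_f|\,dx=0$. On the other hand, exactly your Chebyshev estimate gives $\delta>0$ with $\sup_k|\{J_{f_k}<\delta\}|<|K|/2$, so by \eqref{area1}
\[
|f_k(K)|\ \ge\ \int_{K\cap\{J_{f_k}\ge\delta\}}J_{f_k}\,dx\ \ge\ \tfrac12\delta|K|\ =:\ c>0
\]
for every $k$. But uniform convergence on the compact $K$ forces $f_k(K)$ into the $\eta$-neighbourhood of $f(K)$ for $k$ large, and $|(f(K))_\eta|\to|f(K)|=0<c$ as $\eta\to 0^+$, a contradiction. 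So the decisive idea you are missing is not a transfer of Lusin $(N)$ to $f$, but the choice of $K$ inside a Lipschitz piece of $f$, which makes $|f(K)|=0$ elementary and places the burden of the contradiction entirely on the quantitative estimates for the $f_k$.

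Two smaller remarks on what you did write. The Jensen/Lemma~\ref{l:reverse}-style bound $|f_k(A)|\ge\Phi(|A|)$ is announced as ``the first ingredient'' but never actually used in your chain of inequalities; your argument only uses the simpler Chebyshev bound, so that paragraph can be dropped. Also note that \eqref{area1} gives only $\int_A J_{f_k}\le|f_k(A)|$ for a $W^{1,1}$ homeomorphism, not equality; fortunately that one-sided inequality is all that is needed for the lower bound on $|f_k(K)|$, so this does not affect the estimate, but you should not assert the equality $|f_k(A)|=\int_A J_{f_k}$ without justification.
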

}

\subsection{Minimizers of the tangential Dirichlet integral}\label{s:dirichlet}
In our main proof we have a sphere \textcolor{black}{(or cuboid)} $K$ in $\rn$ and on this sphere we have a small \textcolor{black}{$(n-2)$-dimensional} circle which is a boundary of an open spherical cap $S\subset K$. Our map $f$ is in $W^{1,n-1}$ so we can choose the sets so that $f$ is continuous on the \textcolor{black}{small} circle $\overline S\setminus S$. Our mapping $f$ can have a big oscillation on $S$ so we need to replace \textcolor{black}{it} by a reasonable mapping. We do this by choosing a minimizer of the tangential Dirichlet energy over this cap $S$ which has the same value on the circle $\overline S\setminus S$. In fact we need this even for more general shapes than spheres and circles.

Let $K\subset\rn$ be a shape. 
We say that a relatively open set $S\subset \partial K$ satisfies the \textit{exterior ball condition}
if for each $z\in \overline S\setminus S$ there exists a ball $B(z',r)$ with
$z'\in\partial K$ such that $z\in \partial B(z',r)$ and $B(z',r)\cap S=\emptyset$.

\begin{thm}\label{minimizers} 
 Let $K\subset\rn$ be a shape. Let $S\subset\partial K$
be a connected relatively open subset of $\partial K$ which does not contain
points of edges. 
Let $T$ be the relative boundary 
of $S$ with respect to $K$. Suppose that $\diam S<\frac{\textcolor{black}{r}}{4n}$ and that $S$ satisfies
the exterior ball condition. Let 
$f=(f^1,\dots,f^n)\in W^{1,n-1}(\partial K,\rn)$ be continuous on $T$. Then there exists 
a unique function $h=(h^1,\dots,h^n)\in C(\overline S)\cap W^{1,n-1}(S,\rn)$ such that each coordinate
$h^i$ minimizes $\int_{S}|D_{\tau}u|^{n-1}\,d\haus$ among all functions $u\in f^i+W_0^{1,n-1}(S)$.
We have $h=f$ on $T$, 
the function $h$ satisfies the estimate
\begin{equation}\label{osc}
\diam h(S)\leq \sqrt n\diam f(T)
\end{equation}
\textcolor{black}{and we have $\mathcal{L}^n(h(S))=0$. }
Moreover, \textcolor{black}{if $f_m$ are} continuous and converge to $f$ uniformly on $T$, then $h_m$ converge to $h$ uniformly on $S$, where $h_m$ are minimizers corresponding to boundary values $f_m$. 
\end{thm}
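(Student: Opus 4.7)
The plan is to solve the Dirichlet problem for the tangential $(n-1)$-Laplacian componentwise on $S$ and then verify the listed properties one by one. Flattening $\partial K$ near $\overline S$ (which is possible because $S$ avoids the edges of $K$) reduces this to a scalar Dirichlet problem on an $(n-1)$-dimensional domain with boundary data in $W^{1,n-1}$ that is continuous on $T$.

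For each coordinate $i\in\{1,\dots,n\}$ I would apply the direct method to the strictly convex (because $n-1>1$) coercive functional $u\mapsto \int_S |D_\tau u|^{n-1}\,d\haus$ on $f^i+W^{1,n-1}_0(S)$. The unique minimizer $h^i$ satisfies the tangential $(n-1)$-Laplace equation weakly, so by the classical interior regularity for $p$-harmonic functions one gets $h^i\in C^{1,\alpha}_{\loc}(S)$. To obtain continuity up to $T$ I would use the exterior ball condition: at each $z\in T$ pick an external ball $B(z',r)$ touching $S$ only at $z$; radial $(n-1)$-harmonic functions centred at $z'$, combined with the modulus of continuity of $f|_T$, supply barriers from above and below with a boundary modulus of continuity depending only on $r$, $\diam S$, and the modulus of $f|_T$. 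The smallness hypothesis $\diam S<r/(4n)$ is precisely what makes $S$ fit inside each such barrier ball with room to spare, so the comparison argument is quantitative and yields $h^i\in C(\overline S)$ with $h^i=f^i$ on $T$. Truncating $h^i$ at the boundary extremes $\min_T f^i$ and $\max_T f^i$ cannot raise the Dirichlet energy, so $\min_T f^i\le h^i\le \max_T f^i$ on $\overline S$, and \eqref{osc} then follows coordinatewise from
\[
|h(x)-h(y)|^2=\sum_{i=1}^n|h^i(x)-h^i(y)|^2\le n(\diam f(T))^2.
\]

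For the measure statement, since $h\in C^{1,\alpha}_{\loc}(S,\rn)$ is locally Lipschitz on the $(n-1)$-dimensional set $S$, on every compact $K'\subset\subset S$ the image $h(K')$ has finite $\haus$-measure and thus zero $n$-dimensional Lebesgue measure; exhausting $S$ by such compacts gives $\mathcal L^n(h(S))=0$. For stability under uniform convergence $f_m\to f$ on $T$, the same barrier construction produces a modulus of continuity on $\overline S$ for the minimizers $h_m$ that is independent of $m$, since it depends only on $r$, $\diam S$, and the common modulus of continuity of the $f_m|_T$. Hence $\{h_m\}$ is uniformly bounded and equicontinuous on $\overline S$; Arzel\`a--Ascoli extracts a uniform subsequential limit, and lower semicontinuity of the Dirichlet energy combined with uniqueness of the minimizer identifies this limit as $h$, from which full uniform convergence follows by the standard subsequence argument.

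The main technical obstacle is boundary regularity at points of $T$ at the critical exponent $p=\dim S=n-1$, where the Wiener criterion is borderline and the naive capacity estimates are useless; the exterior ball condition together with the size restriction $\diam S<r/(4n)$ is tailored precisely to produce a usable radial barrier, and making the resulting boundary modulus of continuity quantitative and uniform in $m$ is what drives both the continuity of $h$ up to $T$ and the stability statement.
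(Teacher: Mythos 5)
Your route is correct but genuinely different from the one taken in the paper. After the flattening/projection step (which the paper carries out explicitly for the ball, replacing the tangential energy by $\int_{\Pi(S)}(|D\hat u|^2-(\hat x\cdot D\hat u)^2)^{(n-1)/2}\,d\hat x$), the paper simply verifies the structural axioms of Chapter~5 of Heinonen--Kilpel\"ainen--Martio and then cites their existence/uniqueness theorem, their boundary regularity theorems, their maximum principle, and their comparison principle as black boxes; in particular, uniform stability under perturbation of the boundary data follows directly from the comparison principle together with translation invariance (if $\|f_m-f\|_{\infty,T}<\ep$ then $h-\ep\le h_m\le h+\ep$ on $S$), with no compactness argument needed. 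You instead propose to hand-build logarithmic radial barriers at the borderline exponent $p=\dim S=n-1$, establish a quantitative boundary modulus of continuity, and then use equicontinuity and Arzel\`a--Ascoli together with uniqueness. Both work; the HKM route is cleaner precisely because the Wiener-type boundary regularity at $p=\dim$ is already done there (it is indeed the delicate point you flag, and making the barrier argument rigorous for the variable-coefficient operator, using the exterior ball condition and the smallness $\diam S<r/(4n)$ to control the distortion of the projected metric, takes real care). One further difference: the paper delegates the statement $\mathcal{L}^n(h(S))=0$ to a reference, whereas your argument (interior $C^{1,\alpha}_{\loc}$, hence local Lipschitz, hence $h(K')$ has finite $\haus$ measure and zero $\mathcal{L}^n$ measure for every compact $K'\subset\subset S$, then exhaust $S$) is a correct, self-contained, and arguably more elementary alternative. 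Your truncation proof of the coordinatewise maximum principle $\min_T f^i\le h^i\le\max_T f^i$ and the deduction of $\diam h(S)\le\sqrt n\,\diam f(T)$ is correct and is what the paper gets from the HKM maximum principle.
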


\begin{proof}
We give the proof for the case of a ball $K=B$. In case of a \textcolor{black}{full} cuboid everything is much simpler as $S$ is flat, and the same references for
properties of minimizers are valid. In case of a hollowed cuboid,
$S$ is a part of the boundary of a cuboid or of a sphere. \textcolor{black}{The part $\mathcal{L}^n(h(S))=0$ is proven in \cite[Theorem 2.16]{DHMo}. }

Choose $z=(z_1,\dots,z_n)\in S$. We may assume that $B=B(0,1)$ and that $z_n\ge \frac1{\sqrt n}$.
Let $\Pi$ be the projection $x\mapsto \hat x:=(x_1,\dots,x_{n-1})$. 
For each $x\in S$ we have $x_n>0$ and 
$$
|\hat x|\le |\hat z|+|\hat x-\hat z|\le \sqrt{1-\frac1n}+\frac1{4n}\le 1-\frac{1}{4n}.
$$
If $u\in W^{1,n-1}(S)$ and $\hat u(\hat x)=u(\hat x,\sqrt{1-|\hat  x|^2})$, then
$$
|D_\tau u|^2=|D\hat u|^2-(\hat x\cdot D\hat u)^2.
$$
Indeed, we can extend $u$ to the neighbourhood of $S$ in $\rn$ as 
$u(x)=u(\hat{x},\sqrt{1-|\hat x|^2})$ and 
then $Du=(D_1\hat u,\dots,D_{n-1}\hat u,0)$.
We clearly have 
$$
|Du|^2=|D_{\tau}u|^2+|D_{\nu}u|^2=|D_{\tau}u|^2+|(Du\cdot\nu)|^2,
$$
and for the unit ball we have $Du\cdot\nu =D\hat u \cdot\hat x$
as $\frac{\partial u}{\partial x_n}=0$. 

Note that 
$$
\xi\mapsto |\xi|^2-(\hat x\cdot \xi)^2,\qquad \xi\in \er^{n-1},
$$
is a positive definite quadratic form whenever $|\hat x|<1$ and 
$$
|\xi|^2-(\hat x\cdot \xi)^2\ge (1-|\hat x|^2)|\xi|^2.
$$
The functional
$$
\int_S|D_{\tau}u|^{n-1}\,d\haus=\int_{\Pi(S)}(|D\hat u|^2-(\hat x\cdot D\hat u)^2)^{\frac{n-1}{2}}\,d\hat x
$$
thus satisfies the axioms of Chapter 5 in \cite{HKM}.
The existence and uniqueness of the minimizer follows from \cite[Theorem 5.28]{HKM}.
The continuity up to the boundary follows from \cite[Theorem 6.6 and Theorem 6.31]{HKM}.
The oscillation estimate \eqref{osc} follows from the maximum principle \cite[Theorem 6.5]{HKM}.
The uniform convergence of a sequence of solutions
can be obtained from the comparison principle \cite[Lemma 3.18]{HKM},
namely, if $u$, $v$ are continuous scalar minimizers and $u\le v$ on $T$, then
$u\le v$ on $S$.
\end{proof}

\subsection{Mappings of finite distortion} Let 
$n\geq 2$ and $\Omega\subset\rn$ be a domain. 
The mapping $f \in W_{\loc}^{1,1}(\Omega, \R^{n})$ is said to be \textcolor{black}{a} mapping of finite distortion if $J_{f}(x) \ge  0$ a.e. in  $\Omega$, $J_{f} \in L_{\loc}^{1}(\Omega)$
and $Df(x)$ vanishes a.e. in the zero set of $J_{f}(x)$ (note that the last condition automatically holds if $J_f>0$ a.e.). 
With such a mapping $f$ we may associate the distortion function as 
$$
K_f(x) = 
\begin{cases}
\frac{|Df(x)|^n}{J_f(x)}&\text{ if }J_f(x)>0\\
1&\text{ if }J_f(x)=0.\\
\end{cases}
$$
See \cite{IM} and \cite{HK} and references given there for the introduction to the theory of mappings of finite distortion.

Let $f\in W^{1,1}(\Omega,\rn)$ be a mapping of finite distortion with 
$K_f\in L^{\frac{1}{n-1}}$ and $u\in C^1(\rn)$. Then \textcolor{black}{the following} crucial estimate from Koskela and Mal\'y \cite[(2.1)]{KM} holds (see \cite{KM} for detailed proof) 
\eqn{KM}
$$
\begin{aligned}
\int_{\Omega}|D (u\circ f(x))|\,dx&\le \int_{\Omega} |D u(f(x))|\ |Df(x)|\; dx\\
&\leq \int_{\Omega} |D u(f(x))|\ (K_f(x) J_f(x))^{\frac{1}{n}}\; dx\\
&\leq \Bigl(\int_{\Omega}|Du(f(x))|^n J_f(x)\; dx\Bigr)^{\frac{1}{n}}
\Bigl(\int_{\Omega} K^{\frac{1}{n}\frac{n}{n-1}}_f(x)\; dx\Bigr)^{\frac{n-1}{n}}\\
&\leq\|D u\|_{L^n(f(\Omega))}\|K^{\frac{1}{n-1}}_{f}\|_{L^{1}(\Omega)}^{\frac{n-1}{n}}.\\
\end{aligned}
$$
\subsection{Extension properties of Lipschitz domains}

It is well known that Lipschitz domains are Sobolev extension
domains, see Calder\'on \cite{C} and Stein \cite{S}. The Sobolev extension property holds even for 
so called uniform domains, see Jones \cite{J}. For nice recent progress in the field
of Sobolev extension see Koskela, Rajala and Zhang \cite{KRZ}.

Much less is known if we want to extend a Sobolev homeomorphism on $\overline\Omega$ ($\Omega\subset\rn$ Lipschitz) 
and require injectivity at least on a neighbourhood
of $\overline\Omega$. Such a result would simplify the proof of our 
main theorem. Unfortunately, we are aware only of planar result
and thus we bypass the absence of such a tool in a series of 
auxiliary results (Lemma \ref{l:ven}, Theorem \ref{t:ext},
Lemma \ref{otravne2}).
Note that the planar result due to Koski and Onninen \cite{KO}
deals in fact with a more difficult problem of extension from the boundary.
If we do not start from a function given on the interior, we cannot
use any kind of reflection.

\begin{lemma}\label{l:ven}
Let  $\Omega'\subset\rn$  be a Lipschitz domain. Then there exist a Lipschitz mapping $\ell\colon\overline{\Omega'}\to\rn$
and $\delta>0$
with the following properties:
\begin{enumerate}[\rm(a)]
\item $x\in\partial\Omega'\implies 
\ell(x)=x 
$,
\item $\dist(x,\partial\Omega')<
\delta 
\implies \ell(x)\notin\Omega'$.
\end{enumerate}
\end{lemma}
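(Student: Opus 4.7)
The strategy is to construct $\ell$ in a tubular neighborhood of $\partial\Omega'$ as a Lipschitz ``outward push,'' built from local graph charts and a partition of unity. Because $\Omega'$ is Lipschitz, I would first cover $\partial\Omega'$ by finitely many bounded open sets $U_1,\dots,U_N\subset\rn$ together with rigid motions $R_i$ and Lipschitz functions $g_i\colon\er^{n-1}\to\er$ of common Lipschitz constant $L$ such that $R_i(U_i\cap\Omega')=R_i(U_i)\cap\{y_n>g_i(y')\}$. On each chart, define the Lipschitz scalar ``height in chart $i$'' by $d_i(x)=(R_ix)_n-g_i((R_ix)')$ and the unit ``outward direction in chart $i$'' by $\mathbf{e}_i=-R_i^{-1}e_n$. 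The function $d_i$ vanishes on $\partial\Omega'\cap U_i$, is positive on $\Omega'\cap U_i$, and on a small inner neighborhood satisfies $\dist(x,\partial\Omega')\le d_i(x)\le\sqrt{1+L^2}\,\dist(x,\partial\Omega')$.

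Next, I would refine the cover so that $\mathbf{e}_i\cdot\mathbf{e}_j\ge 1-\eta$ whenever $U_i\cap U_j\ne\emptyset$, for a small $\eta=\eta(L)$; this is possible by compactness together with the uniform exterior-cone property enjoyed by every Lipschitz boundary point. Taking a Lipschitz partition of unity $\{\varphi_i\}$ subordinate to $\{U_i\}$ with $\sum_i\varphi_i\equiv 1$ on a tubular neighborhood $T$ of $\partial\Omega'$, I would set
$$
\ell(x)\;=\;x+C\sum_{i=1}^N \varphi_i(x)\,d_i(x)\,\mathbf{e}_i,\qquad x\in\overline{\Omega'}\cap T,
$$
for a large constant $C=C(L,\eta)$, and extend by the identity outside $T$ using one further Lipschitz cutoff of the displacement so that the resulting map on $\overline{\Omega'}$ is globally Lipschitz.

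For the verification: on $\partial\Omega'$ every $d_i(x)$ with $\varphi_i(x)>0$ vanishes, so $\ell(x)=x$, which is (a). For $x\in\Omega'$ with $\dist(x,\partial\Omega')<\delta$ small, the scalar $\sum_i\varphi_i(x)\,d_i(x)$ is comparable to $\dist(x,\partial\Omega')$ by the bounds above, and the normalized direction of $\sum_i\varphi_i(x)\mathbf{e}_i$ lies in a cone of opening $O(\sqrt{\eta})$ around each individual $\mathbf{e}_i$ that is active at $x$. Reading this in a chart $i_0$ with $\varphi_{i_0}(x)>0$, the displacement in chart coordinates has a component of size at least $C(1-\eta)\dist(x,\partial\Omega')$ in the direction $-e_n$; for $C$ chosen large enough relative to $L$, this exceeds $\sqrt{1+L^2}\,\dist(x,\partial\Omega')$ and therefore crosses the graph of $g_{i_0}$, forcing $\ell(x)\notin\Omega'$ and giving (b).

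The main obstacle is precisely the gluing step. Averaging the local reflections $x\mapsto x+2d_i(x)\mathbf{e}_i$ directly does not work, because convex combinations of points in $\rn\setminus\overline{\Omega'}$ can reenter $\Omega'$. The way around this is to average the displacement vectors rather than the image points, and to exploit the uniform exterior-cone opening provided by the Lipschitz constant $L$ by refining the cover so that on each overlap all active outward vectors lie inside a narrow common cone. The remaining bookkeeping, including the explicit choices of $\eta$, $C$, and $\delta$ in terms of $L$, is routine.
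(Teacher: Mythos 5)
Your formula for $\ell$ is essentially the same as the paper's: a partition of unity times local outward displacements, scaled by a large constant. Where you and the paper diverge is in the \emph{verification} that $\ell(x)\notin\Omega'$, and there your argument has a genuine gap.

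The gap is the step ``refine the cover so that $\mathbf{e}_i\cdot\mathbf{e}_j\ge 1-\eta$ whenever $U_i\cap U_j\ne\emptyset$, for a small $\eta=\eta(L)$; this is possible by compactness together with the uniform exterior-cone property.'' That is not a compactness fact. The direction in which $\partial\Omega'$ is a Lipschitz graph is \emph{not} unique, and the valid directions at two nearby boundary points need not be close when one keeps a uniform Lipschitz constant. A concrete illustration: take the unit square in $\er^2$. Near a corner, any chart that covers the corner with Lipschitz constant $\le 1$ must use a direction within a fixed cone around the diagonal bisector; a chart on the adjacent edge that does not contain the corner may use the edge normal. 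Two such charts overlap arbitrarily close to the corner, yet the angle between their directions is $\pi/4$, giving $\mathbf{e}_i\cdot\mathbf{e}_j=1/\sqrt2$, so $\eta\ge 1-1/\sqrt2\approx 0.29$. For $L=1$ your later requirement $(1-\eta)-L\sqrt{2\eta}\sqrt{1+L^2}>0$ fails at $\eta\approx 0.29$. To make $\eta$ small, you must not merely refine the cover but \emph{re-choose} the chart directions so that they vary continuously along $\partial\Omega'$ with a uniform graph constant. That is a nontrivial selection problem for general Lipschitz boundaries (there is no canonical normal), and you neither state it as a lemma nor prove it; ``compactness'' does not supply it.

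The paper's proof uses the same formula but a different, more robust verification that never needs angular control between charts. For a given $x$, it picks one chart $i_0$ with weight $\ge 1/m$ so that the single term $m\eta_{i_0}(x)(\Pi_{i_0}(x)-x)$ already overshoots the boundary in chart $i_0$ and lands outside $\Omega'$. It then proves by induction on $q$ that each partial sum $x_q=x+m\sum_{i\le q}\eta_i(x)(\Pi_i(x)-x)$ stays outside $\Omega'$: since $x_{q-1}\notin\Omega'$ and lies in $U_q$ (thanks to the careful choice of chart radii), it is of the form $\Pi_q(x_{q-1})+\lambda\ve_q$ with $\lambda\ge 0$, and adding a further nonnegative multiple $\lambda'\ve_q$ of the same chart's outward axis yields $\lambda+\lambda'\ge0$, still outside, provided $\lambda+\lambda'<2R$. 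This uses only the one-sided structure of a single Lipschitz graph at a time; no comparison between $\ve_q$ and $\ve_{q'}$ is needed. I would recommend replacing your averaging-of-directions argument with this iteration, or else stating and proving the continuous selection of graph directions as a separate lemma.
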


\begin{proof}
By the definition of a Lipschitz domain,
there exist open sets $U_i\subset\rn$, unit vectors $\ve_i\in\rn$, 
Lipschitz mappings $\Pi_i\colon U_i\to\partial\Omega'\cap U_i$,
$i=1,\dots,m$, and $R,\rho>0$ with the following properties
\begin{enumerate}[(i)]
\item for each $x\in U_i$ there exists $\lambda\in (-R,R)$ such that $x=\Pi_i(x)+\lambda \ve_i$,
\item for each $x\in U_i\cap \partial\Omega'$ and $t\in (0,2R)$ we have
$\Pi_i(x)=x$, $x+t\ve_i\in \rn\setminus\overline{\Omega'}$, $x-t\ve_i \in \Omega'$.
\item $\{x\in\rn\colon\dist(x,\partial\Omega')\le \rho\}\subset\bigcup_i U_i$.
\end{enumerate}
For each $z\in \overline{\Omega'}$ with $\dist(z,\partial\Omega')\le \rho$ find
$B_z=B(z,r_z)$ such that there exists $i=i(j)\in\{1,\dots,m\}$ with $\overline B(z,(m+1)r_z)\subset U_i$.
Using compactness  of $\{z\in\rn\colon\dist(z,\partial\Omega')\le \rho\}$
select finite covering of this sets by balls $B(z_j,r_j)$, $j=1,\dots,p$ with the property
that $r_j=r_{z_j}$ and find a smooth partition of unity $(\omega_j)_j$ 
on 
$\{z\in\rn\colon\dist(z,\partial\Omega')\le \rho\}$
such that $\{\omega_j>0\}=B(z_j,r_j)$, $j=1,\dots,p$.
Set $r=\min\{R/(m+1),\rho,r_1,\dots,r_p\}$ and find $\delta>0$ such that
for each $x\in U_i$, $i=1,\dots,m$, we have
$$
\dist(x,\partial\Omega)<\delta\implies |x-\Pi_i(x)|< r.
$$ 
Set
$$
\aligned
\eta_i(x)&=\sum_{j\colon i(j)=i}\omega_j,\qquad i=1,\dots,m,\\
\ell(x)&=x+m\sum_{i=1}^m \eta_i(x)(\Pi_i(x)-x) \text{ if }x\in\overline\Omega' \text{ and }
\dist(x,\partial\Omega')\le \delta
\endaligned
$$
and extend $\ell$ in a Lipschitz way to $\overline{\Omega'}$.
Then $\ell$ is a Lipschitz mapping which is identity on $\partial\Omega'$.

Fixing $x\in\overline{\Omega'}$ with $\dist(x,\partial\Omega')\le \delta$, we must prove
that $\ell(x)\notin\Omega'$.
We find $i_0\in\{1,\dots,m\}$ such that 
$$
\eta_{i_0}(x)\ge 1/m.
$$
We may assume that $i_0=1$ and that 
$x\in\spt\eta_i$ 
if and only if $i\in\{1,\dots,k\}$ for 
some $k\in \{1,\dots,m\}$. 
Write
$$
x_1:=x+m\eta_1(x)(\Pi_1(x)-x)=\Pi_1(x)+(m\eta_1(x)-1)(|\Pi_1(x)-x|)\ve_1.
$$
Since
$$
(m\eta_1(x)-1)(|\Pi_1(x)-x|)\le mr\le R,
$$
we have $x_1\notin\Omega'$. We have $x_1\in B(x,m\eta_1(x)r)$.
If $k=1$, we are done. 

Now, we proceed by induction.
Write 
$$
x_q=x+m\sum_{i\le q} \eta_i(x)(\Pi_i(x)-x), \qquad q\le k.
$$
By induction hypothesis we have $x_{q-1}\notin \Omega'$, Further,
$$
x_{q-1}\in B(x,m(\eta_1(x)+\dots+\eta_{q-1}(x))r)\subset B(x,mr).
$$
We find $j_q$ such that $i(j_q)=q$ and $|x-z_{j_q}|\le r_{j_q}$. Then 
$$
|x_{q-1}-z_{j_q}|\le mr+r_{j_q}\leq (m+1)r_{j_q}
$$ 
and thus we have $x_{q-1}\in U_q$.
This means that $x_{q-1}$ is of the form $\Pi_q(x_{q-1})+\lambda \ve_q$
with $\lambda<R$. Since $m\eta_q(x)(\Pi_q(x)-x)=\lambda'\ve_q$
with $0\le \lambda'\le mr\le R$, we have
$x_q=\Pi_q(x_{q-1})+(\lambda+\lambda')\ve_q$ with $\lambda+\lambda'<2R$
and it follows that $x_q\notin\Omega'$. 
We conclude that $\ell(x)=x_k\notin\Omega'$.
\end{proof}

\begin{thm}\label{t:ext}
Let $\Omega,\Omega'$ be Lipschitz domains and $f$ be a $W^{1,p}$-homeomorphism of
$\overline\Omega$ onto $\overline{\Omega'}$. Then there exist $\Omega_0\supset\overline\Omega$,
$\Omega_0'\supset\overline{\Omega'}$, and a continuous 
$W^{1,p}$-mapping $\tilde f\colon\Omega_0\to\Omega_0'$
such that $\tilde f=f$ on $\overline\Omega$ and $\tilde f$ maps  $\Omega_0\setminus\Omega$
to $\Omega_0'\setminus\Omega'$.
\end{thm}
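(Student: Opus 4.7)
The plan is to extend $f$ across $\partial\Omega$ by a reflect-and-push construction: reflect a point outside $\Omega$ to the inside via a bi-Lipschitz reflection across $\partial\Omega$, apply $f$, and then push the resulting image outside $\Omega'$ by composing with the Lipschitz map $\ell$ from Lemma \ref{l:ven}. Continuity at $\partial\Omega$ is automatic because $\ell$ is the identity on $\partial\Omega'$, so the ``push'' is a no-op on the interface.

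First I would produce a bi-Lipschitz involution $R\colon U\to U$ on a tubular neighborhood $U$ of $\partial\Omega$, with $R|_{\partial\Omega}=\id$ and $R$ interchanging $U\cap\Omega$ with $U\setminus\overline\Omega$. Since $\partial\Omega$ is Lipschitz, it is locally the graph of a Lipschitz function across which the vertical reflection is bi-Lipschitz; patching these local reflections using a smooth partition of unity on a sufficiently thin tube yields a global bi-Lipschitz $R$ on $U$. Next I would apply Lemma \ref{l:ven} to obtain $\ell\colon\overline{\Omega'}\to\rn$ and $\delta>0$, and then use the uniform continuity of $f$ on the compact set $\overline\Omega$ to shrink $U$ so that $\dist(f(y),\partial\Omega')<\delta$ for every $y\in\overline{U\cap\Omega}$.

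Setting $\Omega_0:=\Omega\cup U$, I would define
\[
\tilde f(x):=\begin{cases} f(x), & x\in\overline\Omega,\\ \ell\bigl(f(R(x))\bigr), & x\in U\setminus\overline\Omega.\end{cases}
\]
The two formulas agree on $\partial\Omega$ (as $R|_{\partial\Omega}=\id$ and $\ell|_{\partial\Omega'}=\id$), so $\tilde f$ is continuous. On $\Omega$ it coincides with $f$ and is therefore in $W^{1,p}$, while on $U\setminus\overline\Omega$ the composition $\ell\circ f\circ R$ lies in $W^{1,p}$ since pre-composition with a bi-Lipschitz map and post-composition with a Lipschitz map both preserve $W^{1,p}$ regularity. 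Continuous matching across the Lipschitz surface $\partial\Omega$ then promotes these two pieces to a single $W^{1,p}$ mapping on $\Omega_0$. For the image condition, if $x\in U\setminus\overline\Omega$ then $R(x)\in\overline{U\cap\Omega}$, so $\dist(f(R(x)),\partial\Omega')<\delta$, and property (b) of Lemma \ref{l:ven} yields $\ell(f(R(x)))\notin\Omega'$. Choosing $\Omega_0'$ to be any bounded open set containing $\overline{\Omega'}\cup\tilde f(\overline{\Omega_0})$ completes the construction.

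The main obstacle is the production of the global bi-Lipschitz reflection $R$: each local vertical reflection across a Lipschitz graph is bi-Lipschitz, but a crude partition-of-unity patch only yields a Lipschitz map in general. One has to choose the tube $U$ thin enough that all participating local reflections stay close to the identity along $\partial\Omega$, so that the Jacobian of the patched map remains uniformly bounded away from zero. This is standard but delicate; once it is in hand, the rest of the argument amounts to a direct verification of continuity, Sobolev regularity, and the image constraint as above.
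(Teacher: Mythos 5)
Your reflect-and-push scheme is a genuinely different route from the paper's. The paper takes the off-the-shelf Calder\'on--Stein extension $f^*$ of $f$ to a neighbourhood of $\overline\Omega$ (which is automatically continuous, since the extension operator preserves continuity of a function that is continuous on $\overline\Omega$), and then only post-composes with $\ell$ at those $x\in\Omega_0\setminus\overline\Omega$ for which $f^*(x)$ accidentally lands in $\Omega'$; no reflection across $\partial\Omega$ is ever needed. You instead pull an exterior point back into $\Omega$ by a bi-Lipschitz reflection $R$, apply $f$, and then push out with $\ell$. Both approaches use Lemma \ref{l:ven} identically, and your verification of continuity, of the Sobolev gluing across $\partial\Omega$, and of $\tilde f(\Omega_0\setminus\Omega)\cap\Omega'=\emptyset$ is sound once $R$ is in hand.

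The gap is precisely the one you flag, but your proposed repair does not close it. A partition-of-unity average $R=\sum_i\omega_i R_i$ of local Lipschitz-graph reflections need not be injective, need not be the inverse of itself, and need not carry $U\setminus\overline\Omega$ into $\Omega$: each local $R_i$ reverses the normal direction, so $DR_i$ stays uniformly far from the identity no matter how thin the tube is; and since a Lipschitz domain is not locally convex, the convex combination $\sum_i\omega_i(x)R_i(x)$ can escape $\Omega$ even when every $R_i(x)\in\Omega$. The fallback ``Jacobian uniformly bounded away from zero, hence locally bi-Lipschitz'' is also unavailable here, because the $R_i$ are only Lipschitz and there is no inverse function theorem in that category. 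Bi-Lipschitz reflections across the boundary of a Lipschitz domain do exist, but establishing this is a theorem in its own right (one route is the flow of a transversal Lipschitz vector field produced from the local graph directions and a partition of unity, which gives a bi-Lipschitz collar $\partial\Omega\times(-1,1)$ and hence a reflection); it cannot be dismissed as a crude patching. The paper's choice of the standard Sobolev extension sidesteps exactly this obstruction, which is why its proof of Theorem \ref{t:ext} is four lines long.
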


\begin{proof}
We use Lemma \ref{l:ven} to $\Omega'$ and we keep the notation from Lemma \ref{l:ven}.
Let $f^*\colon\rn\to\rn$ 
be the usual $W^{1,p}$-extension of $f$, by its construction
it follows that $f^*$ is continuous. Find $\tau>0$ such that
$$
\dist(x,\overline\Omega)<\tau\implies \dist(f^*(x),\partial\Omega')<\delta.
$$
Set
$$
\aligned
\Omega_0 &=\{x\colon \dist(x,\overline\Omega)<\tau\},\\
\tilde f(x)&=
\begin{cases}
f^*(x),& x\in\overline\Omega\text{ or }f^*(x)\notin\Omega' ,\\
\ell(f^*(x)),& x\in \Omega_0\setminus\overline\Omega\text{ and }f^*(x)\in\Omega',
\end{cases}\\
\text{ and }\Omega'_{0}& =\tilde{f}(\Omega_0). \\
\endaligned
$$
It is easily verified that $\tilde f$ has the desired properties.
We use the chain rule (see e.g. \cite[Theorem 2.1.11]{Z} or \cite[Theorem 3.16]{AFP}) to prove the Sobolev regularity of the composition. 
\end{proof}

\section{Limit of homeomorphisms satisfies (INV)}

Recall that \textcolor{black}{our energy \eqref{energy2} is given by} 
$$
\F(f)=\int_{\Omega}(|D f|^{n-1}+\ff(J_f))\,dx,
$$
where $\ff$ is a positive convex function on
$(0,\infty)$ that satisfies \eqref{varphi} 
and  \eqref{varphi2}. 

\begin{thm}\label{distthm}
Let $n\geq 3$, $\Omega,\Omega'\subset\rn$ be 
Lipschitz
domains and
let $\ff$ satisfy \eqref{varphi}  and  \eqref{varphi2}.

Let $f_m\in W^{1,n-1}(\Omega,\Omega')$, 
$m=0,1,2,\dots,$ 
be a sequence of homeomorphisms.  
Let $f_m$ converge weakly in $W^{1,n-1}(\Omega,\rn)$ to a limit function $f$.
Assume further that we have either
\begin{enumerate}[\rm(a)]
\item
 $f_m$ are 
homeomorphisms of $\overline{\Omega}$ onto $\overline{\Omega'}$ \textcolor{black}{ with $J_{f_m}>0$ a.e. such that $f_m=f_0$ on $\partial\Omega$,
 for all $m\in\en$,
\begin{equation}\label{energy}
\F(f_m)\le C_1
\end{equation}
and 
\begin{equation}\label{distortion}
\|K_{f_m}^{\frac1{n-1}}\|_1\le C_1,
\end{equation}
or }
\item $f_m$ converge strongly in $W^{1,n-1}(\Omega,\rn)$  to $f$ \textcolor{black}{and $J_f>0$ a.e.}
\end{enumerate}

Then $f$ satisfies (INV).
\end{thm}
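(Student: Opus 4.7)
The strategy is to verify conditions (i)--(iii) of Definition \ref{inv} via the equivalent formulation in Remark \ref{r:inv} (the hypothesis $J_f>0$ a.e.\ follows from Lemma \ref{jfnonzero}), by passing to a subsequence of spheres on which the topological degree behaves well and exploiting that each $f_m$ is a homeomorphism.

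First I would use Theorem \ref{t:ext} applied to $f_0$ to extend the problem to a slightly larger domain $\Omega_0\supset\overline\Omega$ so that the common boundary datum sits continuously and the sequence is uniformly bounded in $L^\infty$. Fix $a\in\Omega$ and pick $r_a>0$ with $\overline{B(a,r_a)}\subset\Omega$. A Fubini argument combined with the uniform bound $\sup_m\int_\Omega|Df_m|^{n-1}<\infty$ provides a set $R\subset(0,r_a)$ of full measure along which the traces $f_m|_{\partial B(a,r)}$ are uniformly bounded in $W^{1,n-1}$. After extracting a diagonal subsequence (using Mazur's lemma plus compact embedding into $L^{n-1}$ on the sphere) one may assume strong $W^{1,n-1}(\partial B(a,r))$-convergence $f_m\to f$ for every $r\in R$; this gives condition (i) together with the uniform $L^\infty$ bound.

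Since each $f_m$ is a homeomorphism with $J_{f_m}>0$, Proposition \ref{p:top=anal} identifies $\deg(f_m,B(a,r),\cdot)=\chi_{f_m(B(a,r))}$, and Lemma \ref{l:strongdeg} upgrades this to
$$
\chi_{f_m(B(a,r))}\longrightarrow \Deg(f,B(a,r),\cdot)\quad\text{in }L^1(\rn),\quad r\in R.
$$
To get condition (iii), for $x\in\Omega\setminus\overline{B(a,r)}$ the homeomorphism property gives $\chi_{f_m(B(a,r))}(f_m(x))=0$, hence the area formula yields
$$
\int_{\Omega\setminus\overline{B(a,r)}}\chi_{f_m(B(a,r))}(f_m(x))\,J_{f_m}(x)\,dx=0.
$$
Equi-integrability of $J_{f_m}$ (supplied by de la Vall\'ee Poussin applied to $\ff(J_{f_m})$ via \eqref{varphi}--\eqref{varphi2}), pointwise a.e.\ convergence $f_m\to f$, and an $L^1$ truncation of the target integrand then let one pass to the limit to obtain $\int_{\Omega\setminus\overline{B(a,r)}}\Deg(f,B(a,r),f(x))\,J_f(x)\,dx=0$, and condition (iii) follows since $J_f>0$ a.e. Condition (ii) is proved symmetrically with $0$ replaced by $1$; the non-degeneracy of the target image, supplied by Lemma \ref{l:reverse}, ensures that no mass collapses in the limit.

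The main obstacle is the composition step: $\Deg(f,B,\cdot)$ is only defined up to null sets, so $\Deg(f,B,f(x))$ is meaningful only if $f$ satisfies the Lusin $(N^{-1})$ condition. This is precisely where the distortion hypothesis $\|K_{f_m}^{1/(n-1)}\|_1\le C_1$ enters in case (a): passing this bound to the lower-semicontinuous limit and applying the Koskela--Mal\'y estimate \eqref{KM} forces $|f^{-1}(N)|=0$ whenever $|N|=0$ (cf.\ \cite{KM}). The analogous pullback for the $f_m$ side uses Lemma \ref{Ninv}, with uniformity furnished by the lower bound \eqref{reverse}. In case (b), strong $W^{1,n-1}$-convergence together with the assumption $J_f>0$ a.e.\ renders the distortion machinery unnecessary, as equi-integrability of $|Df_m|^{n-1}$ and pointwise a.e.\ convergence make the passage to the limit routine via Vitali convergence on good spheres.
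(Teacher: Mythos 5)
There is a genuine gap in case (a), and it is precisely the obstacle that forces the paper's intricate construction. You claim that after a Fubini slicing argument "one may assume strong $W^{1,n-1}(\partial B(a,r))$-convergence $f_m\to f$ for every $r\in R$." This does not follow from weak convergence $f_m\rightharpoonup f$ in $W^{1,n-1}(\Omega)$. The slicing argument (as in Lemma \ref{l:goodball}) only produces, after passing to a subsequence, a uniform $W^{1,n-1}(\partial B)$ bound and hence \emph{weak} $W^{1,n-1}(\partial B)$-convergence together with pointwise a.e.\ convergence; it does not upgrade to norm convergence. Mazur's lemma produces convex combinations $g_m$ of the $f_m$ that converge strongly, but those $g_m$ are no longer homeomorphisms, so the identity $\deg(g_m,B,\cdot)=\chi_{g_m(B)}$ fails and you lose the essential input. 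Once strong convergence on spheres is unavailable, Lemma \ref{l:strongdeg} cannot be invoked to pass the degree to the limit, and your subsequent steps (the $L^1$-convergence of $\chi_{f_m(B)}$ to $\Deg(f,B,\cdot)$, the passage to the limit in the area-formula identity) have no foundation. This is exactly why the actual proof of case (a) goes a different route: it assumes (INV) fails, covers $\partial K$ by an $(n{-}2)$-dimensional skeleton on which $W^{1,n-1}$ embeds into H\"older continuous functions and hence where uniform convergence can be extracted, replaces $f$ and $f_m$ by tangential $(n{-}1)$-energy minimizers $g$ and $g_m$ with the same skeleton data, controls the measure of the "bubble" sets where $\Deg(f,K,\cdot)\neq\deg(g,K,\cdot)$, and finally derives a contradiction from the distortion bound via the Koskela--Mal\'y estimate applied to segments joining two Lebesgue density balls $B_U,B_V$ through the small set $g_m(\overline S_{j_0})$. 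The distortion hypothesis is used for that capacity-type estimate, not (as you suggest) to obtain Lusin $(N^{-1})$ for $f$ directly.

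Two smaller points. First, the claim that "condition (ii) is proved symmetrically with $0$ replaced by $1$" is not justified: the degree on a generic shape is not constrained to $\{0,1\}$ \emph{a priori}, and the paper in fact reduces a failure of (ii) to a failure of (iii) on a different shape via Lemma \ref{otravne2} (using the additivity of the degree over a partition of $\Omega$), rather than by a symmetric argument. Second, equi-integrability of $J_{f_m}$ does not by itself give $J_{f_m}\to J_f$ in any mode allowing the limit you want in the area-formula identity; the distributional Jacobian can pick up singular (cavitation) parts in the weak limit, and the integrand $\chi_{f_m(B)}(f_m(x))$ involves a family of varying sets whose characteristic functions need not converge pointwise. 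Your plan is plausible as a heuristic for case (b), where strong $W^{1,n-1}$ convergence does descend to a.e.\ sphere, but as written it does not prove case (a).
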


Our main theorem follows easily from this more general result. 

\begin{proof}[Proof of Theorem \ref{main}]
Assumption \eqref{key} clearly implies \eqref{energy} and by 
the Young 
inequality
$$
ab\leq \frac{1}{p}a^p+\frac{1}{p'}b^{p'}\text{ for }a\geq 0, b\geq 0, p>1
$$ 
used for $p=\frac{(n-1)^2}{n}$ (and thus $p'=\frac{(n-1)^2}{n^2-3n+1}$) we obtain 
$$
\int_{\Omega}K^{\frac{1}{n-1}}_{f_m}\,dx=
\int_{\Omega}|Df_m|^{\frac{n}{n-1}}\frac{1}{J^{\frac{1}{n-1}}_{f_m}}\,dx\leq \frac{1}{p}\int_{\Omega}|Df_m|^{n-1}\,dx+\frac{1}{p'}\int_{\Omega} \frac{1}{J_{f_m}^{\frac{n-1}{n^2-3n+1}}}\,dx. 
$$
The conclusion now follows from Theorem \ref{distthm}.
\end{proof}

\textcolor{black}{
\begin{remark}
Using the Young inequality with $p=\frac{(n-1)^2}{n(1-\varepsilon)}$ (and thus $p'=\frac{(n-1)^2}{n^2-3n+1+n\varepsilon}$) we obtain a similar inequality for lower powers of $K_{f_m}$, i.e., the counterexample from Theorem \ref{example} shows that assuming 
$$
\|K_{f_m}^{\frac{1-\varepsilon}{n-1}}\|_1\le C_1
$$
is also not enough to preserve the (INV) condition under weak limits.
\end{remark}
}

\begin{definition}\label{d:goodball}
Let $\Omega\subset\rn$ be open and let $f_m\in W^{1,n-1}(\Omega,\rn)$ be homeomorphisms 
that converge to 
a limit function $f$ weakly in $W^{1,n-1}(\Omega,\rn)$. 
We say that a shape $K\subset\subset\Omega$ is a 
\textit{good shape (in particular, good ball or good cuboid) \textcolor{black}{ with respect to $(f_m)_m$}} if the following properties are satisfied.
\begin{enumerate}[\rm(i)]
\item The trace of $f$ on $\partial K$ is in $W^{1,n-1}(\partial K,\rn)$. 
In what follows we assume that $f$ is represented to coincide with 
this trace on $\partial K$.

\item If $K$ is a \textcolor{black}{full} cuboid or a hollowed cuboid, the the trace of $f$ on each  $(n{-}2)$-dimensional 
edge $E$ of $K$ is in $W^{1,n-1}(E)$ and the trace 
representative of 
$f$ on the closed $(n-2)$-dimensional skeleton of $K$ is continuous. 
\item 
$|f_m(\partial K)|=0$ for all $m\in\en$. 
\item
There is a subsequence of $f_m$ such that 
the convergence $f_{m_k}\to f$ occurs weakly in $W^{1,n-1}(\partial K,\rn)$ and
$\haus$-a.e.\ on $\partial K$, 
(and therefore
$\deg(f_{m_k},K,\cdot)$ forms a bounded sequence in $BV$.)
\end{enumerate}
\end{definition}


\begin{lemma}\label{l:goodball}
Let $\Omega\subset\rn$ be open and let $f_m\in W^{1,n-1}(\Omega,\rn)$ be homeomorphisms 
that converge to 
a limit function $f$ weakly in $W^{1,n-1}(\Omega,\rn)$. 
Let $B(x_0,r_0)\subset \Omega$. Then $B(x_0,r)$ is a good ball \textcolor{black}{ with respect to $(f_m)_m$}
for a.e.\ $r\in (0,r_0)$.
\end{lemma}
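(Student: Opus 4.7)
The plan is to verify conditions (i), (iii), and (iv) of Definition \ref{d:goodball} for $K=B(x_0,r)$ for a.e. $r\in(0,r_0)$; condition (ii) is vacuous because a ball has no $(n{-}2)$-dimensional edges. The common tool throughout will be the polar (spherical) Fubini identity
$$
\int_0^{r_0}\Bigl(\int_{\partial B(x_0,r)}(|u|^{n-1}+|Du|^{n-1})\,d\haus\Bigr)dr\;\le\; C\,\|u\|_{W^{1,n-1}(B(x_0,r_0))}^{n-1},
$$
together with the Rellich--Kondrachov compactness $W^{1,n-1}(B(x_0,r_0))\hookrightarrow L^{n-1}(B(x_0,r_0))$.

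The first move is to pass once and for all to a subsequence (still denoted $(f_m)$) along which $f_m\to f$ almost everywhere on $B(x_0,r_0)$; this is supplied by Rellich--Kondrachov applied to the weakly convergent, hence bounded, sequence. Applying the polar Fubini inequality above to $u=f$ then delivers condition (i) for a.e.\ $r$. Condition (iii) follows by a pigeonhole argument based on injectivity: for fixed $m$, the spheres $\{\partial B(x_0,r)\}_{r\in(0,r_0)}$ are pairwise disjoint, hence so are the sets $f_m(\partial B(x_0,r))\subset f_m(\Omega)$, which in turn lie in a set of finite measure. So for each integer $k$ there are at most finitely many radii $r$ with $|f_m(\partial B(x_0,r))|>1/k$, the bad set of radii for each $m$ is countable, and a countable union over $m$ remains countable.

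The substantial step is (iv). Applying the polar Fubini inequality to each $f_m$ and using the uniform $W^{1,n-1}(\Omega)$ bound, the numbers
$$
G_m(r):=\int_{\partial B(x_0,r)}(|f_m|^{n-1}+|Df_m|^{n-1})\,d\haus
$$
form a sequence in $L^1(0,r_0)$ with $\sup_m\|G_m\|_{L^1(0,r_0)}<\infty$. By Fatou, $\liminf_m G_m(r)<\infty$ for a.e.\ $r$. For such $r$ I would extract a subsequence $(f_{m_k})_k$ (depending on $r$) along which $G_{m_k}(r)$ stays bounded, so that $(f_{m_k})_k$ is bounded in $W^{1,n-1}(\partial B(x_0,r),\rn)$; a further extraction then delivers a weak limit $g$ in that space. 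Applying polar Fubini one more time to the null set $\{x\in B(x_0,r_0): f_m(x)\not\to f(x)\}$ shows that on $\haus$-a.e.\ sphere one also has $f_{m_k}\to f$ pointwise $\haus$-a.e., which forces $g=f$ on $\partial B(x_0,r)$ and identifies the weak limit. The boundedness of $\deg(f_{m_k},B(x_0,r),\cdot)$ in $BV(\rn)$ is then immediate from \eqref{BVest}.

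The main obstacle to watch for is the coupling of two separate polar-slicing arguments: one supplies boundedness of the sphere $W^{1,n-1}$-norms along an $r$-dependent subsequence, while the other transfers a.e.\ convergence on $B(x_0,r_0)$ down to $\haus$-a.e.\ convergence on each sphere, permitting the weak limit to be identified with $f$. Both applications must be anchored to the \emph{same} initially chosen a.e.-convergent subsequence, so that the final $r$-dependent subsequence is drawn from a sequence that already converges $\haus$-a.e.\ on $\partial B(x_0,r)$. Once this bookkeeping is in place, the remaining arguments are routine.
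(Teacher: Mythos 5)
Your proposal is correct and follows essentially the same route as the paper's own proof: slicing via polar Fubini to get the $W^{1,n-1}$ trace on a.e.\ sphere, the injectivity/pigeonhole argument for $|f_m(\partial B(x_0,r))|=0$, and the Fatou--plus--diagonal extraction on a.e.\ sphere, identified with $f$ through the a.e.\ convergent subsequence transferred from the solid ball to $\haus$-a.e.\ sphere. Your explicit remark that one must fix the a.e.-convergent subsequence \emph{before} the sphere-by-sphere extraction is exactly the bookkeeping the paper relies on, and is slightly more carefully articulated than in the paper's proof.
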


\begin{proof}
By slicing analogous to the proof of the ACL property
we obtain that the trace of $f$ on $\partial B$ is in $W^{1,n-1}(\partial B(x_0,r),\rn)$ for a.e. $r>0$. 
Images of spheres by $f_m$ are disjoint as $f_m$ are one-to-one and thus 
$|f_m(\partial B(x_0,r))|=0$ for a.e. $r>0$. The fact that $\deg(f_m,B,\cdot)$ forms a bounded sequence in $BV$ follows from Section \ref{degree}. 

By the Fubini theorem and by 
the Fatou theorem 
\eqn{Fatou}
$$
\int_0^{r_0}\liminf_{m\to\infty}\Big(\int_{\partial B(x_0,r)} |D_{\tau}f_m|^{n-1}\,d\haus\Big)\; dr\leq 
\liminf_{m\to\infty}\int_{B(x_0,r_0)}|Df_m|^{n-1}
\leq C_1. 
$$
The last inequality implies that for a.e. $r$ 
$$
\liminf_{m\to\infty}\Big(\int_{\partial B(x_0,r)} |D_{\tau}f_m|^{n-1}\,d\haus\Big)<\infty
$$
and we can choose a subsequence for which
the limes inferior  turns to the limit. 
Thus, we have a bounded sequence in $W^{1,n-1}(\partial B(x_0,r))$ and we select a weakly convergent subsequence. Since $W^{1,n-1}$ is compactly embedded into $L^{n-1}$ we obtain that this subsequence converge to $f$ in $L^{n-1}$. Up to a subsequence we can thus assume that it converges to $f$ pointwise $\haus$-a.e.\ on $\partial B$.  
\end{proof}

\begin{lemma}\label{l:mriz}
Let $\Omega\subset\rn$ be a bounded open and let $f_m\in W^{1,n-1}(\Omega,\rn)$ be homeomorphisms 
that converge to 
a limit function $f$ weakly in $W^{1,n-1}(\Omega,\rn)$. 
Let $\delta>0$.
Then there exist partitions 
$$
\aligned
&t_1^0<t_1^1<\dots<t_1^{m_1},\\
&t_2^0<t_2^1<\dots<t_2^{m_2},\\
&\hdots\\
&t_n^0<t_n^1<\dots<t_1^{m_n}
\endaligned
$$
such that 
$$
\overline\Omega\subset (t_1^0,t_1^{m_1})\times\dots\times(t_n^0,t_n^{m_n}),
$$
each $t_i^{j}-t_i^{j-1}<\delta$ and 
each 
$$
Q=(t_1^{j_1-1},t_1^{j_1})\times (t_1^{j_2-1},t_1^{j_2})\times\dots\times
(t_1^{j_n-1},t_1^{j_n})
$$
with $1\le j_i\le m_i$, $i=1,\dots,n$, is a good cuboid \textcolor{black}{ with respect to $(f_m)_m$} provided that
$\overline Q\subset\Omega$.
\end{lemma}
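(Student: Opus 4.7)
The plan is to choose the partition points $t_i^j$ one coordinate direction at a time, so that at each step one only needs to avoid a Lebesgue-null set of parameter values. Properties (i)--(iv) in the definition of a good cuboid are each of the form ``for a.e.\ $t$, the hyperplane slice $\{x_i=t\}\cap\Omega$ (or, for the $(n-2)$-edges, an intersection of two hyperplanes) is nice,'' so after identifying a null set $N\subset\er$ of bad parameters, any sequence in $\er\setminus N$ with spacing less than $\delta$ covering $\overline\Omega$ does the job; such a sequence exists because a null set cannot contain an interval of positive length.

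\textbf{Ingredients per slice.} Fix a direction $i$ and denote $H_i(t)=\{x\in\rn:x_i=t\}$. Applying Fubini to $\int_\Omega |Df|^{n-1}\,dx<\infty$ and to $\sup_m\int_\Omega |Df_m|^{n-1}\,dx<\infty$ (finite by weak convergence), combined with Fatou as in \eqref{Fatou}, we get that for a.e.\ $t$ the trace $f|_{\Omega\cap H_i(t)}$ is in $W^{1,n-1}$ and $\liminf_m\int_{\Omega\cap H_i(t)}|D_\tau f_m|^{n-1}\,d\haus<\infty$. Extracting a weakly convergent subsequence (also converging pointwise $\haus$-a.e.\ by the compact embedding into $L^{n-1}$) secures (i) and (iv). For (iii) we use that $f_m$ is a homeomorphism: the slices $f_m(\Omega\cap H_i(t))$ for distinct $t$ are pairwise disjoint subsets of the finite-measure set $f_m(\Omega)$, so for each $m$ all but countably many $t$ satisfy $|f_m(\Omega\cap H_i(t))|=0$; the countable union over $m\in\en$ preserves this null set.

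\textbf{Inductive axis-by-axis selection.} Collect the bad parameters from the previous paragraph into a null set $N_1\subset\er$ for axis $1$, and choose $t_1^0<\dots<t_1^{m_1}\in\er\setminus N_1$ covering $\overline\Omega$ with $t_1^j-t_1^{j-1}<\delta$. To handle condition (ii) on the $(n-2)$-edges lying in $H_1(t_1^{j_1})\cap H_2(t_2)$, apply Fubini on the already-chosen hyperplane $H_1(t_1^{j_1})\cap\Omega$ to $\int|D_\tau f|^{n-1}\,d\haus<\infty$: for a.e.\ $t_2$ the trace of $f$ on $H_1(t_1^{j_1})\cap H_2(t_2)\cap\Omega$ is in $W^{1,n-1}$, and hence continuous by the Morrey embedding $W^{1,n-1}(E)\hookrightarrow C(E)$ on any $(n-2)$-dimensional cell $E$ (valid since $n-1>n-2$). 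Add the finite union of these null sets over $j_1\in\{0,\dots,m_1\}$ to the axis-$2$ null set $N_2$; the total remains null, and one picks $t_2^0<\dots<t_2^{m_2}$ outside it. Continuing through axes $3,\dots,n$, each time adding only finitely many inherited null-set conditions, yields the desired partition. The main obstacle is precisely property (ii), which couples two coordinate directions; the sequential axis-by-axis choice circumvents this, since at each step the null set to avoid is a finite union indexed by the hyperplanes already chosen in earlier axes.
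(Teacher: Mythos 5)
Your proposal is correct and follows essentially the same strategy as the paper's proof: select the partition points axis by axis, avoiding for each direction a Lebesgue-null set of parameters so that the relevant traces are $W^{1,n-1}$ (hence, on the $(n-2)$-dimensional edges, continuous by Morrey embedding), with Fubini/Fatou securing slice regularity and the weakly convergent subsequence, and disjointness of slice images under the homeomorphisms $f_m$ giving the null-image condition. The paper's own proof is just a terse outline citing Lemma~\ref{l:goodball} and flagging the inductive control of the $(n-2)$-dimensional intersections; your write-up fills in the same steps in more detail.
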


\begin{proof}
The proof is analogous to the proof of Lemma 
\ref{l:goodball} with the additional difficulty that we must take care of
the $W^{1,n-1}$-regularity (which implies continuity for
a suitable representative by the Morrey estimates) on 
the $(n{-}2)$-dimensional edges. Therefore we select the partition points
$t_i^j$ subsequently for $i=1,2,\dots,n$ in such a way that \textcolor{black}{the Sobolev regularity on
the intersections of $\{x\in\Omega\colon x_i=t_i^j\}$
with all $\{x\in\Omega\colon x_i=t_{i'}^{j'}\}$
for all $i'<i$ and $j'\in\{1,\dots,m_{i'}\}$ is controled}.
\end{proof}


In the main proof we assume that the (INV) condition fails. Hence we can find a ball $B\subset\Omega$ such 
\textcolor{black}
{ either something from outside of $B$ is mapped into the topological image of the ball or something from inside of $B$ is mapped outside of topological image, i.e. 
}
that the set
$$
\{x\in \Omega\setminus B:\ f(x)\in \im_T(f,B)\} (\text{or }\{x\in B:\ f(x)\notin \im_T(f,B)\}) \text{ has positive measure}. 
$$
At the same time we need to show that also 
\textcolor{black}
{ something from inside of $B$ is mapped inside the topological image and something from outside of $B$ is mapped outside, i.e. that 
}
the following set have positive measure
$$
\{x\in \Omega\setminus B:\ f(x)\notin \im_T(f,B)\} (\text{or }\{x\in B:\ f(x)\in \im_T(f,B)\}). 
$$
This second condition seems to be believable but unfortunately we need the following technical Lemma to show its existence (note that we replace $f(x)\in \im_T(f,B)$ by $\Deg(f,B,f(x))\neq 0$ in \eqref{goal}). The main idea to show this is simple, we extend our $f,f_m:\Omega\to\Omega'$ to mappings $f^*,f_m^*:\Omega_1\to\Omega'_1$ with $\Omega_1\supsetneq\Omega$ so that other conditions holds for these extensions. Now it is not difficult to see that for many points
$$
x\in \Omega_1\setminus\Omega\text{ we have }f(x)\notin \im_T(f,B). 
$$ 
\textcolor{black}{Moreover, we do another important observation there. In the proof of the main theorem we assume that (INV) condition fails and thus either $(ii)$ or $(iii)$ of Definition \ref{inv} fail. We show that if $(ii)$ fails for some ball then $(iii)$ fails for some other shape. It follows that we can assume in the proof of main theorem that $(iii)$ fails.}

\begin{lemma}\label{otravne2} 
Let $n\geq 3$, $\Omega,\Omega'\subset\rn$ be 
Lipschitz domains, let $\ff$ satisfy \eqref{varphi} and \eqref{varphi2}. 
Let $f_m\in W^{1,n-1}(\Omega,\Omega')$, $m=0,1,2,\dots$, be a sequence of 
homeomorphisms of 
$\overline\Omega$ onto $\overline\Omega'$ such that
 $f_m=f_0$ on $\partial\Omega$, $J_{f_m}>0$ a.e., and 
\begin{equation}\label{en}
\F(f_m)\le C_1, 
\end{equation}
which converges weakly in $W^{1,n-1}(\Omega,\rn)$ 
to a limit function $f$. 
Assume that $f$ does not satisfy (INV). 

Then we can find domains $\Omega_1\supset\Omega, \Omega_1'\subset\rn$, 
such that $f_{\textcolor{black}{m}}$ extend to  $W^{1,n-1}$-homeomorphisms 
$f_{\textcolor{black}{m}}^*\colon\overline\Omega_1\to\overline\Omega_1'$
with $f_{\textcolor{black}{m}}^*=f_0^*$ on $\overline\Omega_1\setminus\Omega$, 
\begin{equation}\label{en2} 
\sup_{\textcolor{black}{m}\in\en} \int_{\Omega_1}\ff(J_{f^*_{\textcolor{black}{m}}})\,dx<\infty\text{ and }
\sup_{\textcolor{black}{m}\in\en}\int_{\Omega_1}K^{\frac{1}{n-1}}_{f^*_{\textcolor{black}{m}}}\,dx<\infty,
\end{equation}
and we can find a good shape $K\subset\subset\Omega_1$
for the sequence $f^*_{\textcolor{black}{m}}$ and the limit function
$f^*$
such that 
both sets 
\eqn{goal}
$$
\{x\in  \Omega_1 \setminus K:\ \Deg(f^*,K,f^*(x))\neq 0\}\text{ and }
\{x\in  \Omega_1 \setminus K:\ \Deg(f^*,K,f^*(x))= 0\}
$$
have positive measure. 
\end{lemma}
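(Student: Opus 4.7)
The plan is to first extend each $f_m$ to a homeomorphism on a slightly enlarged domain and then to extract a good shape $K$ from the assumed failure of (INV). For the extension, I would combine Lemma \ref{l:ven} with a bilipschitz collar construction for the Lipschitz boundaries $\partial\Omega$ and $\partial\Omega'$ to produce a bilipschitz map $\sigma$ from an outer tube $\Omega_1 \setminus \overline\Omega$ onto an outer tube $\Omega_1' \setminus \overline{\Omega'}$ that agrees with $f_0$ on $\partial\Omega$. Setting $f_0^* := f_0$ on $\overline\Omega$ and $f_0^* := \sigma$ on $\overline{\Omega_1} \setminus \Omega$, and defining each $f_m^*$ by $f_m$ on $\overline\Omega$ and $\sigma$ on the collar, the condition $f_m = f_0$ on $\partial\Omega$ gives continuity, injectivity follows from the disjointness of the target regions $\overline{\Omega'}$ and $\Omega_1' \setminus \overline{\Omega'}$, and the Sobolev regularity is standard gluing. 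The integrals in \eqref{en2} over $\Omega_1 \setminus \Omega$ depend only on the fixed bilipschitz $\sigma$, while on $\Omega$ they are controlled by \eqref{en} together with the Young-inequality argument from the proof of Theorem \ref{main}; weak convergence $f_m^* \wKonv f^*$ in $W^{1,n-1}(\Omega_1)$ is inherited, with $f^* = f_0^*$ on the collar.

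Next, by Lemma \ref{jfnonzero} we have $J_f > 0$ a.e., so Remark \ref{r:inv} applies. Since (INV) fails for $f$, there is a ball $B \subset\subset \Omega$ for which either (a) $|\{x \in B : \Deg(f, B, f(x)) = 0\}| > 0$ or (b) $|\{x \in \Omega \setminus B : \Deg(f, B, f(x)) \neq 0\}| > 0$. In case (b) I would take $K := B$, adjusting the radius by Lemma \ref{l:goodball} so that $K$ is a good ball for $(f_m^*)_m$; the first set in \eqref{goal} then has positive measure by hypothesis. For the second, every $x \in \Omega_1 \setminus \overline\Omega$ satisfies $f^*(x) \in \Omega_1' \setminus \overline{\Omega'}$, which lies in the unbounded component of $\rn \setminus f^*(\partial K)$ because $f^*(\partial K) = f(\partial B) \subset \overline{\Omega'}$; hence $\Deg(f^*, K, f^*(x)) = 0$ there, and $|\Omega_1 \setminus \overline\Omega| > 0$ supplies the required positive measure.

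In case (a) I would switch to a hollowed cuboid: pick a cuboid $Q$ with $B \subset\subset Q \subset\subset \Omega$, apply Lemmas \ref{l:goodball} and \ref{l:mriz} with a common subsequence so that both $B$ and $Q$ are good, and set $K := Q \setminus \overline B$. The boundary-integral formula \eqref{qqq} yields
$$
\Deg(f^*, K, \cdot) = \Deg(f^*, Q, \cdot) - \Deg(f^*, B, \cdot)
$$
a.e., and since $f^*$ is a homeomorphism, Proposition \ref{p:top=anal} gives $\Deg(f^*, Q, f^*(x)) = 1$ for every $x \in Q$. Hence for each $x$ in the positive measure set $\{x \in B : \Deg(f, B, f(x)) = 0\}$ (contained in $\Omega_1 \setminus K$ because $B \cap K = \emptyset$), we obtain $\Deg(f^*, K, f^*(x)) = 1 \neq 0$, giving the first set in \eqref{goal}. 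The second set is handled exactly as in case (b).

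The main obstacle is the extension step: Theorem \ref{t:ext} only delivers a continuous Sobolev extension, not a homeomorphism, so the bilipschitz collar has to be built by hand from Lemma \ref{l:ven} and the Lipschitz structure of the boundary, ensuring that the resulting map is genuinely injective, lands in a prescribed outer target tube, and preserves the energy and distortion bounds in \eqref{en2}. Once this is in place, the identification of $K$ and the verification of \eqref{goal} are routine topological and measure-theoretic computations.
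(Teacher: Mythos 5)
The proposal has two serious gaps, one of which is fatal to the argument as written.

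\textbf{Case (a) is wrong.} You set $K = Q \setminus \overline B$ for a cuboid $Q$ with $B \subset\subset Q \subset\subset \Omega$ and invoke Proposition~\ref{p:top=anal} to conclude $\Deg(f^*,Q,f^*(x)) = 1$ for $x \in Q$ ``since $f^*$ is a homeomorphism.'' But $f^*$ is the weak limit of homeomorphisms, not a homeomorphism; that is precisely the situation under study, and if $(\mathrm{INV})$ fails there is no reason for $\Deg(f^*,Q,\cdot)$ to be $1$. Moreover, since $\partial Q \subset \Omega$, the trace $f^*|_{\partial Q}$ is the trace of the weak limit $f$, not of the fixed $f_0$, so its degree is not controlled. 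The paper's proof avoids this by covering a neighbourhood of $\overline\Omega$ with a figure $F$ of good cuboids whose boundary lies \emph{outside} $\Omega$; there the trace is the fixed extended homeomorphism $\tilde f_0$, and additivity of degree over the extended region plus the fact that $\tilde f_0$ maps $F\setminus\overline\Omega$ out of $\Omega'$ gives $\deg(\tilde f, F, y)=1$ for $y\in\Omega'$. The pigeonhole over the pieces of $F$ then identifies a candidate shape $K$. Your version, where the outer boundary is inside $\Omega$, cannot start this chain because the degree on $Q$ is unknown.

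\textbf{The extension step is also not correct as stated.} You propose a bilipschitz collar $\sigma$ from $\Omega_1\setminus\overline\Omega$ to $\Omega_1'\setminus\overline{\Omega'}$ that agrees with $f_0$ on $\partial\Omega$. Since $f_0|_{\partial\Omega}$ is merely a $W^{1,n-1}$ trace, no bilipschitz map can agree with it on the boundary, and the subsequent claim that the integrals over the collar ``depend only on the fixed bilipschitz $\sigma$'' collapses. The paper instead extends by a Lipschitz-reflection composition $R^\Psi_i\circ f_0 \circ R^\Phi_i$, which has exactly the Sobolev regularity of $f_0$ and agrees with $f_0$ on $\partial\Omega$ because the reflections fix the boundary pointwise; the energy and distortion bounds over the collar then follow from $\F(f_0)<\infty$ and \eqref{varphi2}, not from any Lipschitz bound.

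\textbf{Case (b) is informal but closer.} Your claim that $\Omega_1'\setminus\overline{\Omega'}$ ``lies in the unbounded component of $\rn\setminus f^*(\partial K)$'' is a statement about the classical degree of a continuous map, but $f^*|_{\partial K}$ is only in $W^{1,n-1}\cap L^\infty$, so one must work with the distributional degree $\Deg$. The paper handles this with Mazur's lemma: strongly convergent convex combinations $g_m$ of $f_m$ have values in the convex hull $\widehat{\Omega'}$, so $\deg(g_m,K,y)=0$ for $y\notin\widehat{\Omega'}$, and Lemma~\ref{l:strongdeg} passes this to $\Deg(f^*,K,\cdot)$. One also needs to ensure the extended domain is placed so that $(f_0^*)^{-1}(\rn\setminus\widehat{\Omega'})$ has positive measure, which is why the paper chooses a chart containing a boundary point of the convex hull. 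Your sketch skips all of this and, in particular, cannot distinguish points of $\Omega_1'\setminus\overline{\Omega'}$ that are arbitrarily close to $\partial\Omega'$ from those at positive distance.
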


\begin{proof}
Denote $\mathbb H=\{x\in\rn\colon x_1<0\}$.
Denote the reflection
$(x_1,x_2\dots,x_n)\mapsto(-x_1,x_2,\dots,x_n)$ by $R$.
Since domains $\Omega,\Omega'$ are Lipschitz and we can find a joint
localization of both, there
exist \textcolor{black}{$k$ pairs of} open sets $U_i,V_i\subset\rn$ and \textcolor{black}{of}
bilipschitz mappings $\Phi\textcolor{black}{_i}\colon U_i\to\rn$, $\Psi\textcolor{black}{_i}\colon V_i\to\rn$ \textcolor{black}{with $i\in\{1,\dots,,k\}$}
such that 
\begin{enumerate}[(i)]
\item the sets $U_i$ cover $\partial\Omega$,
\item $f_0(U_i)\subset  V_i$,
\item for each $x\in \overline U_i$  
we have 
$x\in\Omega$ iff $\Phi\textcolor{black}{_i}(x)\in \mathbb H$,
\item for each $x\in \overline V_i$
we have 
$x\in\Omega'$ iff $\Psi\textcolor{black}{_i}(x)\in \mathbb H$,
\item $x\in \Phi\textcolor{black}{_i}(U_i)\setminus\mathbb H\implies R(x)\in\Phi\textcolor{black}{_i}(U_i)$,
\item $x\in \Psi\textcolor{black}{_i}(V_i)\cap\mathbb H\implies R(x)\in\Psi\textcolor{black}{_i}(V_i)$. \\
\end{enumerate}
Then we can construct ``Lipschitz reflections'' near $\partial\Omega$ and 
$\partial\Omega'$,
$$
\aligned
R_i^{\Phi}=\Phi_i^{-1}\circ R\circ\Phi_i,\\
R_i^{\Psi}=\Psi_i^{-1}\circ R\circ\Psi_i.
\endaligned
$$
\textcolor{black}{Let us fix $i\in\{1,\dots,\textcolor{black}{k}\}$. Then for any $m$ we can extend} $f\textcolor{black}{_m}$, $j=0,1,\dots$,
to a Sobolev homeomorphism 
$f^*\textcolor{black}{_m}\colon \overline{\Omega\cup U_i}\to \rn$ setting
$$
f^*\textcolor{black}{_m}(x)=
\begin{cases}
f\textcolor{black}{_m}(x),& x\in\overline\Omega,\\
R_i^{\Psi\textcolor{black}{_i}}( f_0 (R_i^{\Phi}(x))),& x\in U_i\setminus\overline\Omega.
\end{cases}
$$
Also we use the limit function
\eqn{hvezda}
$$
f^*=
\begin{cases}
f(x),& x\in\overline\Omega,\\
f_0^*(x),& x\in U_i\setminus\overline\Omega.
\end{cases}
$$
The Sobolev regularity and continuity are preserved by composition with
the bilipschitz mappings.
We use the property \eqref{varphi2} of $\ff$  and $\F(f_0)<\infty$  to verify
that
$$
\sup_{\textcolor{black}{m}\in\en} \int_{\Omega_1}\ff(J_{f^*\textcolor{black}{_m}})\,dx<\infty\text{ and }
\sup_{\textcolor{black}{m}\in\en}\int_{\Omega_1}K^{\frac{1}{n-1}}_{f^*\textcolor{black}{_m}}\,dx<\infty.
$$
Now we look for a good shape $K$ as in the statement of the theorem.
Since $f$ does not satisfy (INV) on $\Omega$ we can use Lemma \ref{l:goodball} 
and Remark \ref{r:inv} 
to find an (arbitrarily 
small) good ball $B(c,r)\subset \Omega$ such that either 
$$
\{x\in \Omega\setminus B:\ \Deg(f,B,f(x))\neq 0\}
$$
or 
\eqn{defA}
$$
A:=\{x\in B:\ \Deg(f,B,f(x))= 0\}
$$
have positive measure. Since $B$ is small we can assume that 
$3\sqrt n\diam B<\dist(B,\partial\Omega)$.

In the first case 
we find $y_0\in\partial\Omega'$ such that $y$ is a boundary point of
the convex hull $\widehat{\Omega'}$ of $\Omega'$,
find
$i\in\{1,\dots,\textcolor{black}{k}\}$ such that $y_0\in V_i$ and
extend $f\textcolor{black}{_m}$ and $f$ to $\overline\Omega_1$ as in \eqref{hvezda}, where $\Omega_1=\Omega\cup U_i$. 
Notice
that for all $y\in f(\Omega_1)\setminus\overline{\widehat{\Omega'}}$ we have $\Deg(f,B,y)=0$. This can be obtained by approximation. Namely,
by the Mazur lemma there exist convex combinations $g\textcolor{black}{_m}$
of $f\textcolor{black}{_m}$ such that $g\textcolor{black}{_m}\to f$ strongly in $W^{1,n-1}(\Omega,\rn)$.
All the functions $g\textcolor{black}{_m}$ have values in $\widehat{\Omega'}$ 
and therefore
$$
\deg(g\textcolor{black}{_m},B,y)=0,\qquad y\notin \widehat{\Omega'}.
$$
Now we use Lemma \ref{l:strongdeg} 
and \eqref{weakdegree}
to deduce that
\eqn{ctverecek}
$$
\Deg(f^*,B,f^*(x))=0\qquad \text{for a.e. }x\in\Omega_1\setminus (f_0^*)^{-1}
(\widehat{\Omega'}).
$$ 
Hence
our conclusion holds for this ball $B$, but of course for $\Omega_1$ instead of $\Omega$.

The second case, namely that
the set $A$ has positive measure, is more tricky.
By Theorem \ref{t:ext}
there exist $\Omega_0\supset\overline\Omega$,
$\Omega_0'\supset\overline\Omega'$, and a continuous 
$W^{1,n-1}$-mapping $\tilde f_0\colon\Omega_0\to\Omega_0'$
such that $\tilde f_0=f_0$ on $\overline\Omega$ and $\tilde f_0$ maps $\Omega'\setminus\Omega$
to $\Omega_0'\setminus\Omega_0$. We set
$$
\tilde f_m(x)=\begin{cases}
\tilde f_0(x),&  x\in \Omega_0\setminus\Omega,\\
f_m(x),& x\in \Omega\\
\end{cases}
\text{ and }
\tilde f(x)=\begin{cases}
\tilde f_0(x),&  x\in \Omega_0\setminus\Omega,\\
f(x),& x\in \Omega.
\end{cases}
$$

We use Lemma \ref{l:mriz} to construct a  partition \textcolor{black}{
$\Q$ of a neighbourhood
of  $\overline{\Omega}$ into good cuboids with respect to the extended functions.
Moreover,
we may assume that each cuboid $Q\in\Q$ intersects $\overline\Omega$ and is so small that it satisfies}
\begin{equation}\label{howQ}
\diam Q<\diam B
\quad\text{ and }\quad 
Q\cap\partial\Omega\ne\emptyset\implies Q\subset\textcolor{black}{\Omega_0}\cap U_i
\text{ for some $i$}.
\end{equation}
\textcolor{black}{
We define} the figure $F$ as (see Fig. \ref{mriz})
$$
\overline F=\bigcup_{Q\in\Q}\overline Q.
$$

\begin{figure}[h t p]
\phantom{a}
\vskip 220pt
{\begin{picture}(0.0,0.0) 
     \put(-190.2,0.2){\includegraphics[width=0.90\textwidth]{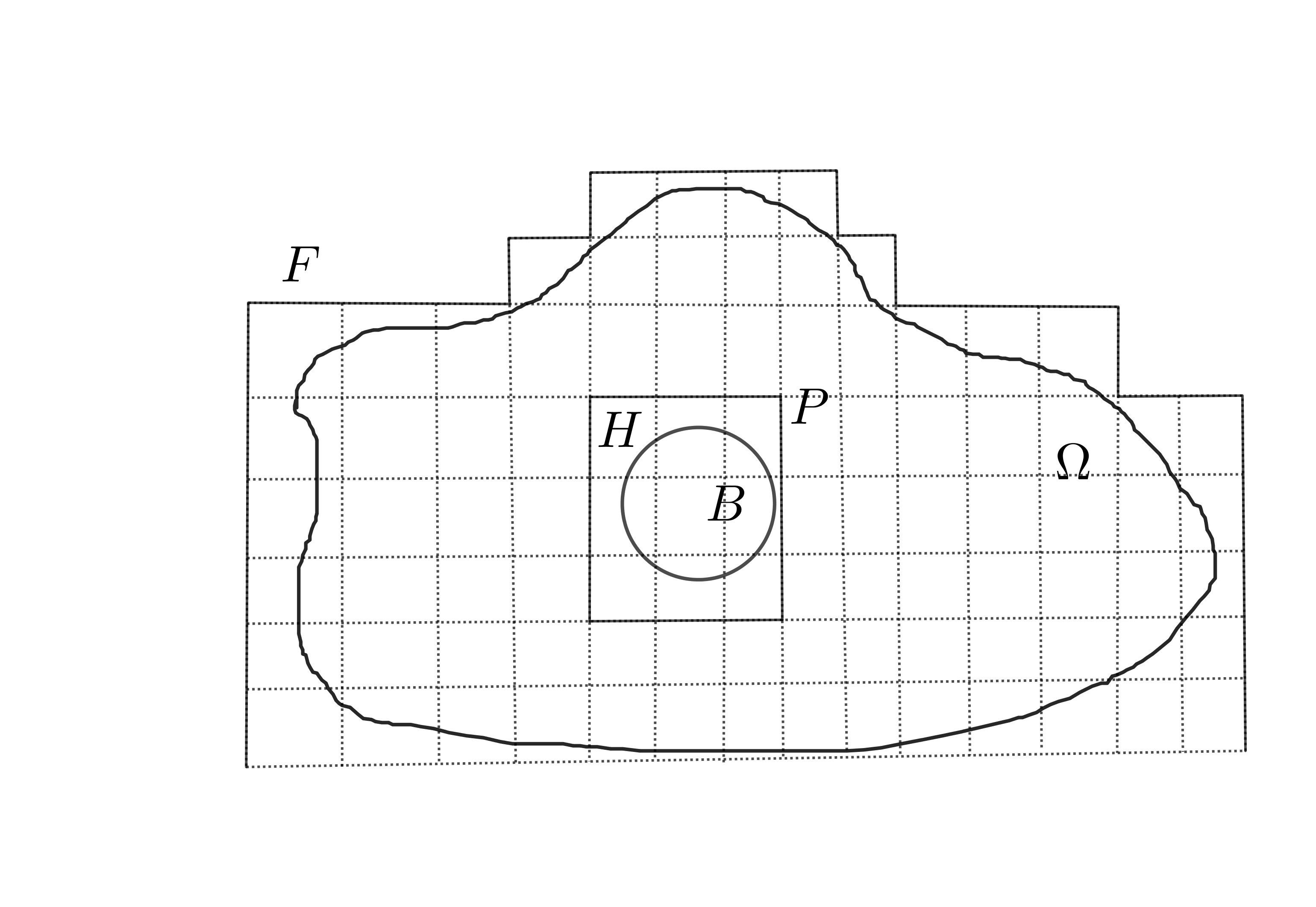}}
  \end{picture}
  } 
\vskip -50pt	
\caption{We cover $\Omega$ by a set of good cuboids $F$ and $B$ by \textcolor{black}{full} cuboids $P$.}\label{mriz}
\end{figure}

Using $\dist(B,\partial\Omega)>3\sqrt n\diam B$ and \eqref{howQ} 
we find $\Q'\subset\Q$ such that the figure $P$ with
$$
\overline P:=\bigcup_{Q\in\Q'}\overline Q
$$
is itself a \textcolor{black}{full} cuboid and 
$$
\overline B\subset P\subset \subset\Omega.
$$
We will consider the hollowed cuboid 
$$
H=P\setminus B.
$$
Denote 
$$
\Q''=\{Q\in\Q\colon Q\cap P=\emptyset\}.
$$

We have \textcolor{black}{(see Fig. \ref{mriz})}
\begin{equation}\label{deg1}
\deg (\tilde f,F,y)=1,\qquad y\in \Omega'.
\end{equation}
Indeed,
$\textcolor{black}{\tilde{f}}=\tilde f_0$ on $\partial F$, $f_0$ is a sense preserving homeomorphism on $\Omega$ 
and 
$$
\deg (\tilde f_0,F,y)=\deg(f_0,\Omega,y)+\deg(\tilde f_0,F\setminus\overline \Omega,y).
$$
Here we use the additivity property of the degree 
and the fact that $\deg(\tilde f_0,F\setminus\overline \Omega,y)=0$
as 
$$
y\notin \tilde f_0(F\setminus\overline \Omega).
$$
\textcolor{black}{
Note that the additivity property of the degree defined by
\eqref{qqq} follows from the fact that the normals
on boundary parts of adjacent surfaces are opposite and 
thus cancellation occurs. }
Further by  \eqref{defA}
$$
\Deg (f,B,y)=0,\qquad y\in f(A).
$$
Now, by \eqref{deg1}
\eqn{suma}
$$
1=\Deg(\tilde f,F,y)=\Deg(f,B,y)+\Deg(f,H,y)+
\sum_{Q\in \Q''}\Deg(\tilde f,Q,y),
\qquad \text{ for a.e. } y\in f(A).
$$

By \eqref{suma} there exists a 
good shape $K$ such that $K\cap B=0$ and $\deg(\tilde f,B,y)\neq 0$
for 
a.e.\ 
$y\in f(A)$, namely either $K=H$ or $K\in \Q''$.
If $K\subset\subset\Omega$, we can proceed as in the preceding case 
(see \eqref{ctverecek})
and add a suitable $U_i$ to $\Omega$.
Let $K\cap\partial\Omega\ne\emptyset$. Then we find 
$i$ such that $K\subset\subset U_i$ and as in the preceding case
extend $f\textcolor{black}{_m}$ as $f\textcolor{black}{_m}^*$
and $f$ as $f^*$
to $\overline{\Omega\cup U_i}$ as in \eqref{hvezda}. 
We \textcolor{black}{now claim that} that
\begin{equation}\label{zbyva}
\Deg(f^*,K,f(x))\neq 0 \quad \text{for a.e.}\quad x\in A.
\end{equation}
We start with showing
that 
\begin{equation}\label{zbyva2}
\Deg(f^*,K,y)=\Deg(\tilde f,K,y)\neq 0\quad \text{for a.e.}\quad y\in f(A).
\end{equation}
To this end we first use a homotopy 
$$
h(y,t)=\Psi_i^{-1}\Bigl(
\Psi_i(\tilde f_0(y))+t\bigl(\Psi_i(f^*_0(y))-\Psi_i(\tilde f_0(y))\bigr)\Bigr)
$$
to \textcolor{black}{prove} that 
\begin{equation}\label{homotopy}
\deg(\tilde f_0,K,y)=\deg(f^*_0,K,y),\qquad y\in\Omega'.
\end{equation}
Let $\psi$ be a smooth function supported in $\Omega'$
and $\ue$ be a smooth function satisfying $\div\ue=\psi$.
Then by \eqref{homotopy} we have
$$
\aligned
&\int_{\partial K\setminus \Omega} 
(\ue\circ f^*_0)\cdot(\Lambda_{n-1} D_{\tau}f^*_0) \nu\; d\haus
\\&\qquad
=
\int_{\partial K} 
(\ue\circ f^*_0)\cdot(\Lambda_{n-1} D_{\tau}f^*_0) \nu\; d\haus
-\int_{\partial K\cap \Omega} 
(\ue\circ f_0)\cdot(\Lambda_{n-1} D_{\tau}f_0) \nu\; d\haus
\\&\quad
=
\int_{\partial K} 
(\ue\circ \tilde f_0)\cdot(\Lambda_{n-1} D_{\tau}\tilde f_0) \nu\; d\haus
-\int_{\partial K\cap \Omega} 
(\ue\circ f_0)\cdot(\Lambda_{n-1} D_{\tau}
f_0 
) \nu\; d\haus
\\&\qquad
=
\int_{\partial K\setminus \Omega} 
(\ue\circ \tilde f_0)\cdot(\Lambda_{n-1} D_{\tau} 
\tilde  f_0 
) \nu\; d\haus.
\endaligned
$$
Hence
$$
\aligned
&\int_{\partial K} 
(\ue\circ f^*)\cdot(\Lambda_{n-1} D_{\tau}f^*) \nu\; d\haus
\\&\qquad
=
\int_{\partial K\setminus\Omega} 
(\ue\circ f^*_0)\cdot(\Lambda_{n-1} D_{\tau}f^*_0) \nu\; d\haus
+\int_{\partial K\cap \Omega} 
(\ue\circ f)\cdot(\Lambda_{n-1} D_{\tau}f 
) \nu\; d\haus
\\&\quad
=
\int_{\partial K\setminus\Omega} 
(\ue\circ \tilde f_0)\cdot(\Lambda_{n-1} D_{\tau}\tilde f_0) \nu\; d\haus
+\int_{\partial K\cap \Omega} 
(\ue\circ f)\cdot(\Lambda_{n-1} D_{\tau}f) \nu\; d\haus
\\&\qquad
=
\int_{\partial K} 
(\ue\circ \tilde f)\cdot(\Lambda_{n-1} D_{\tau}
\tilde f
) \nu\; d\haus.
\endaligned
$$
\textcolor{black}{We thus proved \eqref{zbyva2}. Then,}
by Lemma \ref{Ninv} we have
$$
\Deg(f^*,K,f(x))=\Deg(\tilde f,K,f(x))\neq 0\quad \text{ for a.e. }x\in A.
$$
which establishes \eqref{zbyva}.

\begin{figure}[h t p]
\phantom{a}
\vskip 220pt
{\begin{picture}(0.0,0.0) 
     \put(-210.2,0.2){\includegraphics[width=0.90\textwidth]{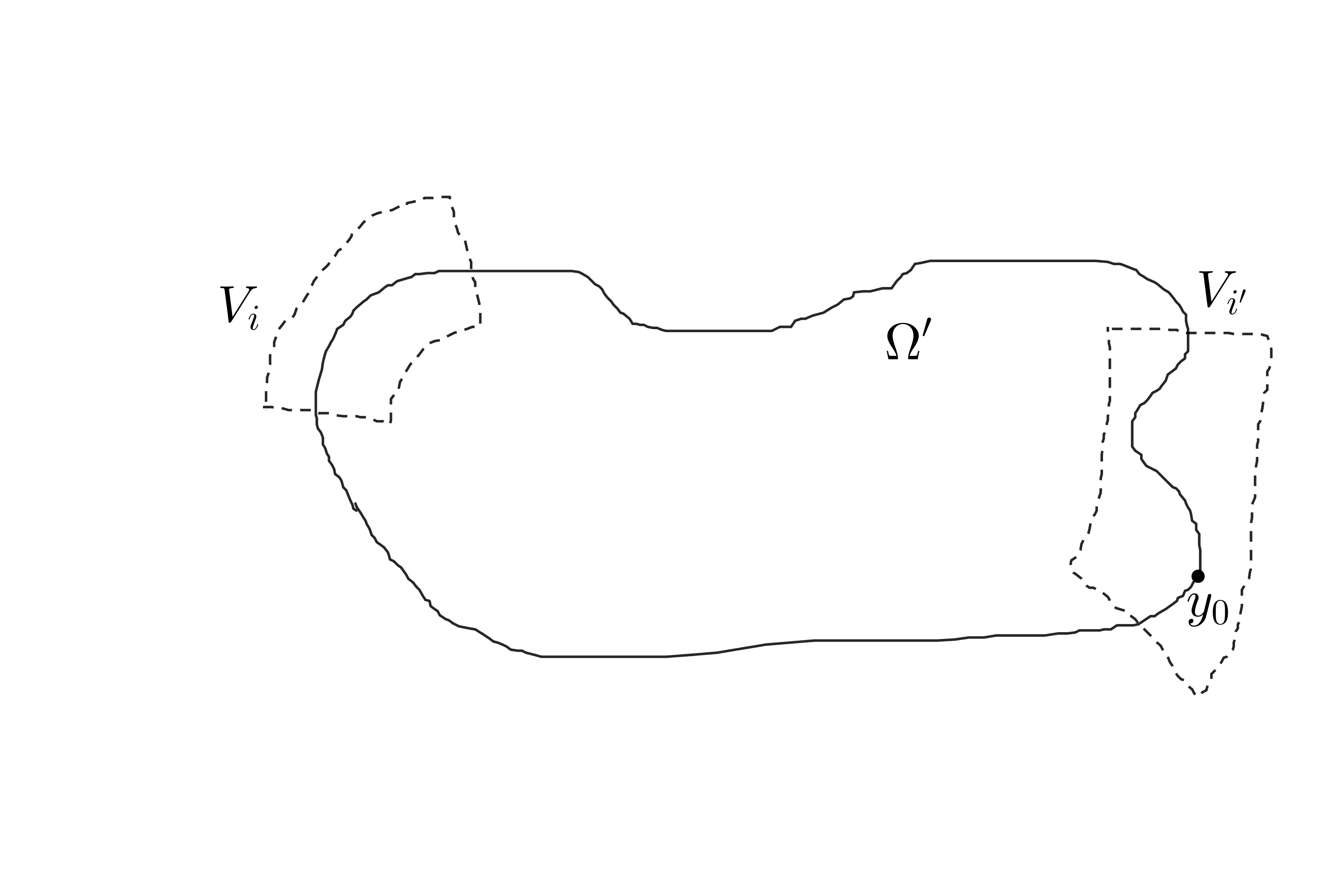}}
  \end{picture}
  } 
\vskip -50pt	
\caption{We add to $\Omega'$ two disjoint sets $V_i$ and $V_i'$.}\label{pridej2kusy}
\end{figure}

Now, we need to extend the function to a still larger set 
$\Omega_1$ (see Fig. \ref{pridej2kusy}). Similarly to the first step, we find $i'\in\{1,\dots,\textcolor{black}{k}\}$
and a point $y_0\in \partial\Omega'\cap \partial\widehat{\Omega'}$
such that $y_0\in V_{i'}$ (recall that $\widehat{\Omega'}$ denotes the convex hull of $\Omega'$). We set
$\Omega_1=\Omega\cup U_i\cup U_{i'}$ and extend $f\textcolor{black}{_m}$ and $f$
to $f^*\textcolor{black}{_m}$ and $f^*$ using ``Lipschitz reflection''
on both $U_i$ and $U_{i'}$. To make it possible, we require
in addition that 
$V_{i}\cap V_{i'}=\emptyset$. This can be achieved if 
the covering of the boundary is chosen fine enough. 
If we consider the strongly converging convex combinations
$g\textcolor{black}{_m}^*$, we observe that $g\textcolor{black}{_m}^*(x)\in \widehat{\Omega'}$
if $x\in \Omega$ and $g\textcolor{black}{_m}^*(x)\in V_{i'}$ if $x\in U_{i'}\setminus\Omega$.
Therefore $g\textcolor{black}{_m}^*(x)\notin V_{i'}\setminus \widehat{\Omega'}$ for 
$x\in K$
and thus 
$$
\Deg(f^*,K,f(x))=0\qquad\text{for a.e. } x\in U_{i'}\setminus (f^*_0)^{-1}
(\widehat{\Omega'}).
$$
\end{proof}

\begin{proof}[Proof of Theorem \ref{distthm}]

\textcolor{black}{{\underline{Step 1. Outline of the proof}:} 
We give here a short informal summary of the proof first. Case (b) is proven by a simplified version of the proof of Case (a), as thanks to the strong convergence we do not need the assumption on integrability of the distortion and Jacobian of $f_m$.}

\textcolor{black}{ 
We start by assuming that $f$ violates the (INV) condition and that "something from outside is mapped inside the topological image". We find a good shape $K$ with respect to $(f_m)_m$ such that
$$
U\setminus K=\{x\in\Omega\setminus K\colon \Deg (f,K,f(x))\ne 0\}
$$
is of positive measure. (In the most simple case, $K$ may be one of the balls which violate the (INV) condition.) Those are the points which originally were outside of $K$ but $f$ mapped them into the topological image of $K$. We cover the boundary of $K$ by a $(n-2)$-dimensional "cage" or "skeleton" made of parts of $(n-2)$-dimensional circles. On this skeleton our functions are Hölder continuous. On the rest of the boundary of $K$ we replace them by $g_m$ and $g$ which are continuous. One can think of it as of having prescribed deformation of the skeleton and $g_m$ and $g$ being a suitable continuous extensions of it on $\partial K$. The differences between the topological images of $f_m$ and $g_m$ (or $f$ and $g$) create bubbles of some kind, through which the material can leave the topological image of $K$ or enter it from the outside (see Figure \ref{defFj}). The neck of such bubble must be getting thinner and thinner as $m$ grows, since in the end the topological image "skips" it completely (see Figure \ref{defUV}). We find two balls $B_U$ and $B_V$ of the same sizes outside of $K$ such that a big parts of them lie in $U$ and $V$, respectively. Since most of $B_U$ is then mapped inside the topological image of $K$ but $B_V$ is mapped outside of it, the lines connecting these two balls must pass through the thin neck of the bubble. That gives a contradiction with our assumption on the integrability of the distortion, as the necks are getting smaller and smaller, but the material of the lines cannot be deformed that much.
}

\textcolor{black}{{\underline{Step 2. Finding a good shape $K$}:} }
\textcolor{black}{We assume for contradiction that $f$ does not satisfy the (INV) condition.}
Assume first Case (a).
Since (INV) fails for $f$, by Lemma \ref{otravne2}
we may assume (passing if necessary to a different domain and different mapping) that there is a good shape $K$ \textcolor{black}{ with respect to $(f_m)_m$} 
such that 
both sets $U\setminus K$ and $V\setminus K$ have positive measure,
where
\eqn{UV}
$$
U=\{x\in\Omega\colon \Deg (f,K,f(x))\ne 0\},\quad V=\{x\in\Omega\colon \Deg (f,K,f(x))= 0\}.
$$

In Case (b),  we find a good ball $K$
such that (INV) is violated on $K$.
Since either (ii) or (iii) from Definition \ref{inv} fails, 
by Remark \ref{r:inv} 
we have 
that either $U\setminus K$ has positive measure,
or $V\cap K$ has positive measure. We will handle the former case, the latter one being similar.

\textcolor{black}{{\underline{Step 3. Finding a skeleton of $\partial K$}:} 
Now, we handle Cases (a) and (b)} together. 
Since $f_m$ converge weakly in $W^{1,n-1}$ and $W^{1,n-1}$ is compactly embedded into $L^{n-1}$ on each ball $B\subset\subset\Omega$,
we obtain that $f_m$ converge to $f$ in $L^{n-1}$ at least locally. 
Up to a subsequence we can thus assume that $f_m\to f$ pointwise a.e. \textcolor{black}
{Using Lemma \ref{jfnonzero} we thus obtain that $J_f\neq 0$ a.e. (for Case (a), as we assume it in Case (b)).}  
Passing if necessary to a subsequence we find a constant $C_2$ such that
$$
\int_{\partial K}(|D_{\tau} f|^{n-1}+|D_{\tau} f_m|^{n-1})\,d\haus < C_2,\qquad m\in\en.
$$
Choose $\ep>0$ small enough whose exact value is specified later. 
Find 
$\rho\in (0,\,\frac{1}{16n}r_0)$ 
such that for each $z\in\partial K$ we have
\eqn{ahoj}
$$
\int_{\partial K\cap B(z,2\rho)}|D_{\tau} f|^{n-1}\,d\haus
<
\ep^{n-1}.
$$
Now, we distinguish \textcolor{black}{three possibilities} according to the form of the shape $K$. \textcolor{black}{We define sets $T_j\subset \partial K$ which form a "skeleton" of $\partial K$. Their key property will be that the diameter of their image under $f$ is small, namely
\eqn{morrey0}
$$
\diam f(T_j)\le C_3\ep.
$$
}

\textbf{First, let $K$ be a ball.}
For each $z\in \partial \textcolor{black}{K}$ we find $\rho_z\in (\rho,2\rho)$ such that 
$$
\aligned
\rho\int_{\partial K\cap \partial B(z,\rho_z)}|D_{\tau} f|^{n-1}\,d\hauso
<
\ep^{n-1}.
\endaligned
$$ 
Analogously to the definition of the good ball we can also assume that $f_m\to f$ occurs $\hauso$-a.e. on $\partial K\cap \partial B(z,\rho_z)$ and that 
$$
\liminf_{m\to\infty}\|f_m\|_{W^{1,n-1}(\partial K\cap \partial B(z,\rho_z))}<\infty. 
$$
It follows that up to a subsequence (see e.g. \cite[Lemma 2.9]{MS})
\eqn{funiformly0}
$$
f_m\to f\text{ weakly in }W^{1,n-1}\text{ and also uniformly on }\partial K\cap \partial B(z,\rho_z). 
$$
Note that on the $(n-2)$ dimensional space $\partial K\cap \partial B(z,\rho_z)$ we have embedding into H\"older functions $W^{1,n-1}\hookrightarrow C^{0,1-\frac{n-2}{n-1}}$ and thus $f$ is continuous there 
and we have the estimate 
\eqn{morrey}
$$
\diam f(\partial K\cap \partial B(z,\rho_z))
\leq C (\rho_z)^{1-\frac{n-2}{n-1}}\Bigl(\int_{\partial K\cap \partial B(z,\rho_z)}|D_{\tau} f|^{n-1}\,d\hauso\Bigr)^{\frac{1}{n-1}}
\le C_3 \ep.
$$
Using \textcolor{black}{a} Vitali type covering, we find $B_j=B(z_j,\rho_j)$ such that $\rho_j=\rho_{z_j}$,
$$
\partial K\subset \bigcup_jB(z_j,\rho_j)
$$
and the balls $B(z_j,\frac15\rho_j)$ are pairwise disjoint. Here $j=1,\dots,j_{\max}$.
 Note that the multiplicity
of the covering is estimated by a constant $N_1$ depending 
only on the dimension since $\rho_z\in (\rho,2\rho)$ for every $z$. 
Furthermore, the balls in the Vitali covering theorem are chosen inductively so we can also assume using \eqref{funiformly0} 
that for a subsequence (chosen in a diagonal argument) 
\eqn{funiformly}
$$
f_m\to f\text{ weakly in }W^{1,n-1}\text{ and uniformly on } 
\partial K\cap \partial B(z_j,\rho_j)\text{ for each }j.
$$ 
Given $j$, denote 
$$
S_j=\partial K\cap B_j\setminus \bigcup_{l<j}\overline B_l.
$$
Note that $S_j$ obviously satisfies the exterior ball condition of Subsection \ref{s:dirichlet}. 
Let $T_j$ denote the relative boundary of $S_j$ with respect to $\partial K$. \textcolor{black}{From \eqref{morrey} we have
\eqref{morrey0}.
}

\textbf{If $K$ is a \textcolor{black}{full} cuboid,} similarly to the proof of Lemma 
\ref{l:mriz} we find partitions of each face of $K$
to $(n{-}1)$-dimensional \textcolor{black}{(full)} cuboids $S_j$ such that,
denoting the relative boundaries of $S_j$ with respect to $K$ by $T_j$,
we have
$\diam S_j<\rho$
and 
$$
\rho\int_{T_j}|D_{\tau} f|^{n-1}\,d\hauso\le \ep^{n-1}.
$$
We can also assume that $f_m\to f$ occurs $\hauso$-a.e. on 
$T_j$ and that 
$$
\liminf_{m\to\infty}\|f_m\|_{W^{1,n-1}(T_j)}
<\infty. 
$$
It follows that up to a subsequence (see e.g. \cite[Lemma 2.9]{MS})
\eqn{funiformly1}
$$
f_m\to f\text{ weakly in }W^{1,n-1}(T_j)\text{ and also uniformly on }T_j. 
$$
By embedding we also have continuity and H\"older estimates
similar to \eqref{morrey} of $f$ on
$T_j$, in particular \textcolor{black}{\eqref{morrey0}}.

\textbf{If $K$ is a hollowed cuboid,}
we \textcolor{black}{construct the skeleton} of flat and round parts of the boundary
combining the methods used for a ball and a cuboid, \textcolor{black}{obtaining sets $T_j$ with the desired property \eqref{morrey0}}.

\textcolor{black}{\underline{Step 4. Replacing $f$ by $g$ with similar degree}:} 
Now we consider the shapes together.
For each $j$ we define $h_j$ on $S_j$ such that $h_j$ minimizes 
coordinate-wise 
the 
tangential $(n-1)$-Dirichlet integral among functions with boundary data $f$ on $T_j$ (see Theorem \ref{minimizers}). We define $h_j=f$ on $\partial K\setminus S_j$. 
Also we define the function $g$ on $\partial K$ as $g=h_j$ on 
each $\overline S_j$.
Set (see Fig. \ref{defFj})
$$
\aligned
F&=\{y\in \Omega'\colon \Deg(f,K,y)\ne \deg(g,K,y) \},\\
F_j&=\{y\in  \Omega'\colon \Deg(f,K,y)\ne \textcolor{black}{\Deg}(h_j,K,y) \}.
\endaligned
$$
\begin{figure}
\phantom{a}
\vskip 220pt
{\begin{picture}(0.0,0.0) 
     \put(-170.2,0.2){\includegraphics[width=0.90\textwidth]{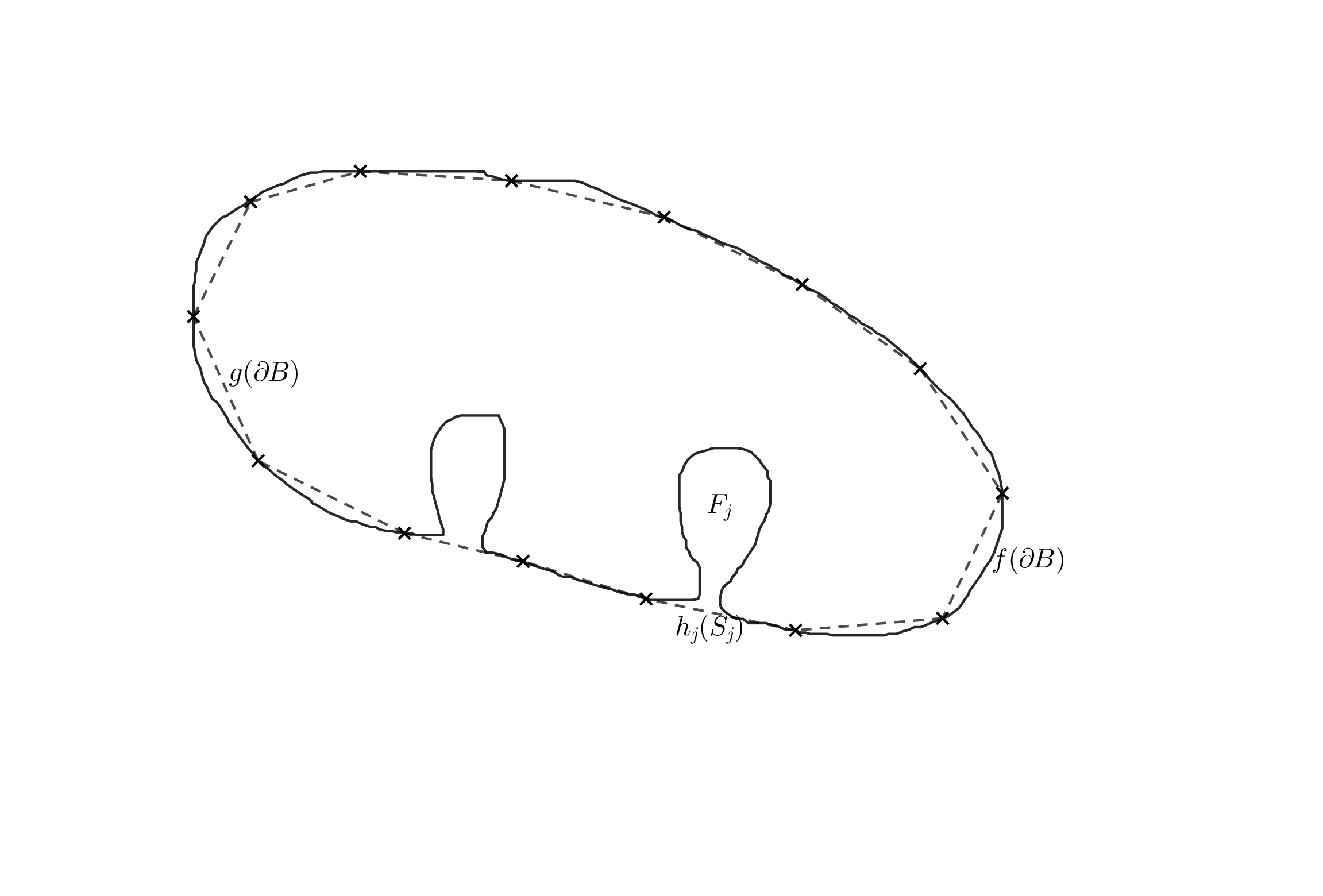}}
  \end{picture}
  }
\vskip -50pt	
\caption{2D representation of the sets $F_j$. $T_j$ corresponds to points on $f(\partial K)$ (of course in $\rn$ they are $(n-2)$-dimensional), $h_j$ is represented by dashed lines connecting these points (of course these are minimizers of $(n-1)$-energy in higher dimensions and not lines) and $F_j$ is created ``between'' $h_j(S_j)$ and $f(S_j)$.}\label{defFj}
\end{figure}

Then 
$$
y\in \bigcup_jF_j \quad \text{for a.e. }y\in F
$$
(this can be viewed e.g. by using \eqref{weakdegree}) 
and, by \eqref{odhad}, \eqref{ahoj}, and the minimizing property
$\int_{S_j}|D_{\tau}h_j|^{n-1}\,d\haus\leq C\int_{S_j}|D_{\tau}f|^{n-1}\,d\haus$ we have
$$
\aligned
\sum_{j}|F_j|&\le C
\sum_{j}\Big(\int_{S_j}(|D_{\tau} f|^{n-1}+|D_{\tau} h_j|^{n-1})\,d\haus\Big)^{\frac{n}{n-1}}
\\&
\le C\sum_{j}\Big(\int_{S_j}|D_{\tau} f|^{n-1}\,d\haus\Big)^{\frac{n}{n-1}}\\
&\le C\ep
\sum_{j}\int_{S_j}|D_{\tau} f|^{n-1}\,d\haus
\le CC_2\ep.
\endaligned
$$

\textcolor{black}{\underline{Step 5. Concluding the proof for Case (b)}:} 
Now, we distinguish the cases again.
Assume (b). 
Since $J_f\neq 0$ a.e. we can choose $\ep$ small enough so that
using Lemma \ref{Ninv} we obtain 
\eqn{Fg2}
$$
|f^{-1}(F)|\le \Bigl|f^{-1}\Bigl(\bigcup_{j}F_j\Bigr)\Bigr|\le \kappa.
$$
\textcolor{black}{We can find $\delta=\delta(\kappa)>0$ such that there exists a set $Z\subset \Omega$ such that $J_f>2\delta$ on $\Omega\setminus Z$ and $|Z|<\kappa/2$. Since (up to a subsequence) $D_{f_m}\to D_f$ pointwise a.e. we obtain $J_{f_m}\to J_f$ pointwise a.e., and hence we can find $m$ big enough such that $J_{f_m}>\delta$ on $\Omega\setminus Z'$, where $|Z'|<\kappa$.
Then we can pass to a subsequence so that we have $J_{f_m}>\delta$ on $\Omega\setminus Z'$ and using \eqref{area1} we obtain 
\eqn{Fg3}
$$
|f_m^{-1}(F)|\leq |Z'|+|f_m^{-1}(F\setminus f_m(Z'))|\le \kappa + \Bigl|f_m^{-1}\Bigl(\bigcup_{j}F_j \setminus f_m(Z')\Bigr)\Bigr|\le \kappa + \frac{CC_2\varepsilon}{\delta}\leq 2\kappa
$$
for all $m\in\en$ when $\epsilon$ is chosen small enough.}
Fix $m\in\en$ and note that 
$$
\aligned
\text{ for every }x\in U\setminus K\text{ we have } \deg(f_m,K,f_m(x))= 0\text{ since }f\textcolor{black}{_m}\text{ is a homeomorphism}. 
\endaligned
$$
Using the definitions of $U$ \eqref{UV}
$$
\begin{aligned}
U \setminus K&\subset\{\deg(f_m,K,f_m(x))=0,\ \Deg(f,K,f(x))\neq 0\}
\\
&\subset \{\deg(f_m,K,f_m(x))\ne \Deg(f,K,f_m(x))\}
\cup \{\Deg(f,K,f_m(x))\ne \textcolor{black}{\deg}(g,K,f_m(x))\}
\\&\cup \{\deg(g,K,f_m(x))\ne \deg(g,K,f(x))\}
\cup \{\Deg(g,K,f(x))\ne \Deg(f,K,f(x))\}.
\end{aligned}
$$
We already know by \eqref{Fg2} and \eqref{Fg3} that 
$$
\{\Deg(f,K,f_m(x))\ne \Deg(g,K,f_m(x))\}\cup \{\Deg(g,K,f(x))\ne \Deg(f,K,f(x))\}<\textcolor{black}{3}\kappa.
$$
Now, since the components of $\rn\setminus g(\partial K)$ are open and $f_m\to f$
a.e.,
we can \textcolor{black}{ assume that $m$ is so large that }
$$
|\{\deg(g,K,f_m(x))\ne \deg(g,K,f(x))\}|<\kappa.
$$
Finally, since $f_m\to f$ strongly, 
for $m$ large enough we have by Lemma \ref{l:strongdeg}
$$
|\{\deg(f_m,K,\cdot)\ne \Deg(f,K,\cdot)\}|<\Phi(\kappa),
$$
so that  by  \eqref{reverse} 
$$
| \{\deg(f_m,K,f_m(\cdot))\ne \Deg(f,K,f_m(\cdot))\}|<\kappa.
$$ 
Altogether,
$|U \setminus K|<\textcolor{black}{5}\kappa$. Since $|U\setminus K|>0$ we can choose $\kappa$ small enough, so that 
we have a contradiction. \textcolor{black}{The case $|V\cap K|>0$ is done analogously.}

\textcolor{black}{\underline{Step 6. Finding balls $B_U$ and $B_V$ which are mostly in $U$ and $V$}:
From now on, we consider only Case (a). }
We can use Lemma \ref{otravne2} and \eqref{UV} and we can thus assume that 
both $U\setminus K$ and $V\setminus K$ have positive measure. 

Let $\Pi$ be \textcolor{black}{the} orthogonal projection onto the hyperplane $\{x\in\rn:\ x_1=0\}$. 
Assume that $\kappa\in(0,\frac16)$ is so small that for each
ball $B$ and each measurable set $E$ we have
\eqn{howdelta1}
$$
|B\setminus E|<5\kappa|B|\implies |\Pi(E)|\ge \frac78|\Pi(B)|.
$$ 
Using Lebesgue density arguments, we find $r>0$ small enough and balls $B_U=B(x_U,r)$ and $B_V=B(x_V,r)$
such that 
\eqn{Udelta1}
$$
\aligned
|B_U\setminus U|\le \kappa|B_U|,\\ |B_V\setminus V|\le \kappa|B_V|
\endaligned
$$
and the convex hull of $\overline B_U\cup \overline B_V$ is contained in $\Omega\setminus K$.
We may assume that $x_V-x_U$ is a multiple of $\mathbf e_1$ (see Fig. \ref{defUV}).

	\begin{figure}
\phantom{a}
\vskip 200pt
{\begin{picture}(0.0,0.0) 
     \put(-170.2,0.2){\includegraphics[width=0.90\textwidth]{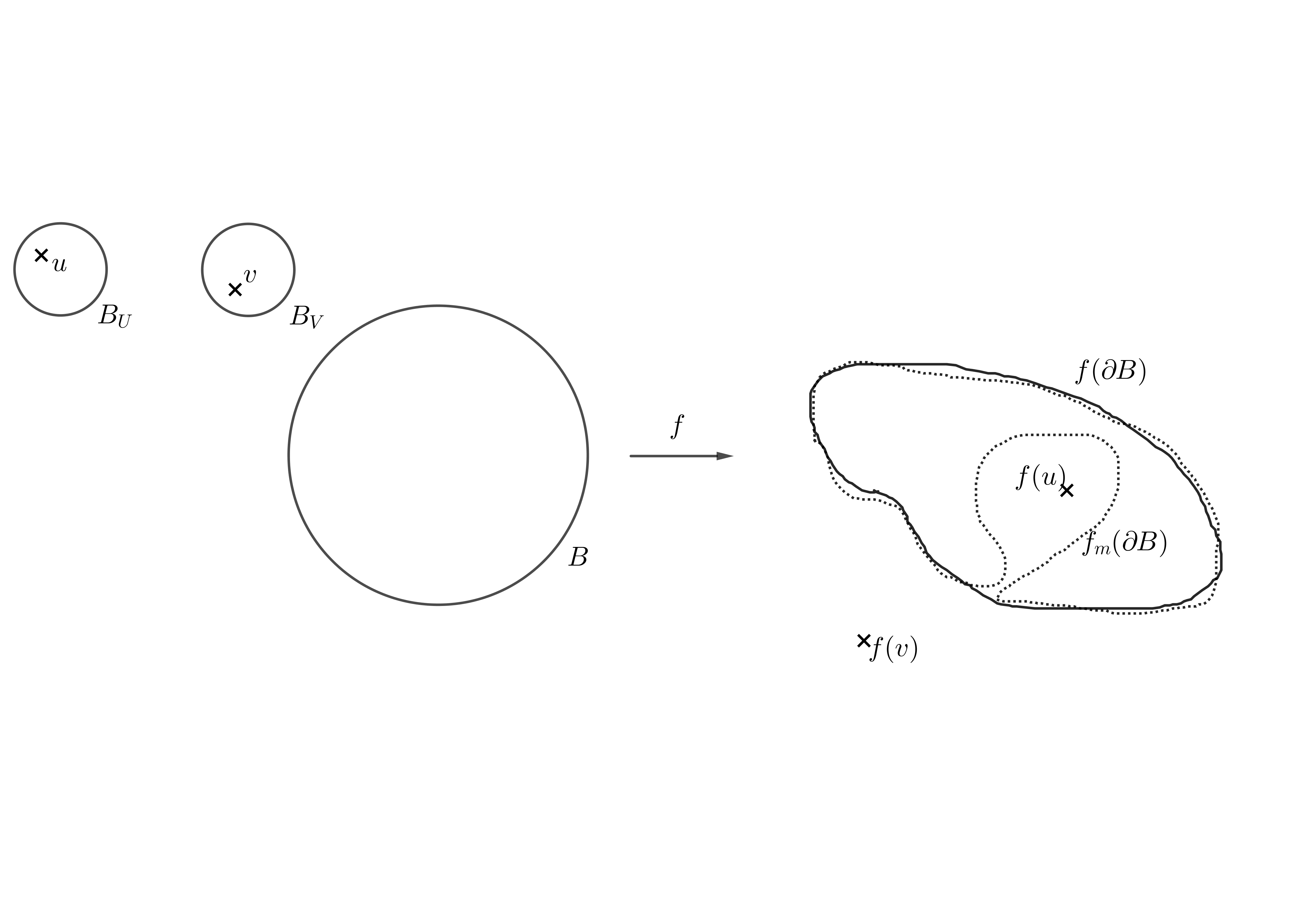}}
  \end{picture}
  }
\vskip -70pt	
\caption{Definition of $B_U$ and $B_V$. Images of most of the points $u\in B_U$ lies inside $f(\partial B)$. On the other hand, for $f_m(\partial B)$ (see dotted line) both points $f(u)$ and $f(v)$ are outside for high enough $m$ and most of the points in $B_U$ and $B_V$.}\label{defUV}
\end{figure}

Choosing $\ep$ small enough we can \textcolor{black}{assume} 
using Lemma \ref{Ninv} that
\eqn{Fg}
$$
|f^{-1}(F)|\le \Bigl|f^{-1}\Bigl(\bigcup_{j}F_j\Bigr)\Bigr|\le \kappa|B_U|. 
$$

\textcolor{black}{\underline{Step 7. Replacing $f_m$ by $g_m$ with similar degree}:} 
Find a compact set $H\subset\Omega'\setminus g(\partial K)$ such that
\eqn{defK}
$$
\Omega'\setminus H<\Phi(\kappa|B_U|).
$$
For each $m\in\en$ and $j\in\{1,\dots,j_{\max}\}$ 
let $g_{m,j}$ be defined in $S_j$ as the 
coordinate-wise minimizer of the 
$(n-1)$-Dirichlet integral among functions with boundary data $f_m$ on $T_j$.
We define $g_{m,j}$ as
$f_m$ on $\partial K\setminus S_j$. We also define
$g_m$ on $\partial K$ as $g_{m,j}$ on each $\overline{S_j}$.

Since $f_m\to f=g$ uniformly on $T_j$ by \eqref{funiformly}
(or \eqref{funiformly1}), we have $g_m\to g$ uniformly on $\partial K$ using Theorem \ref{minimizers}.
Hence we find $m\in\en$ such that $g_m(\partial K)$ 
does not intersect 
$H$ and 
\eqn{W'}
$$
\deg (g_m,K,\cdot)=\deg (g,K,\cdot)\quad\text{in }H.
$$

Also, we require
$$
|f_m-f|=|f_m-g|< \ep\quad\text{on all }T_j.
$$
Similarly as in Fig. \ref{defFj} (but using $f_m$ instead of $f$) we define 
$$
\aligned
E&=\{y\in \Omega'\colon \deg(f_m,K,y)=0\ne \deg(g_m,K,y) \},\\
E_j&=\{y\in  \Omega'\colon \deg(f_m,K,y)=0\ne \deg(g_{m,j},K,y) \}.
\endaligned
$$
Then
$$
y\in \bigcup_jE_j \quad \text{for a.e. }y\in E.
$$
Using \eqref{odhad} and the minimizing property
$\int_{S_j}|D_{\tau} g_{m,j}|^{n-1}\,d\haus\leq C\int_{S_j}|D_{\tau} f_m|^{n-1}\,d\haus$, we obtain
$$
|E_j|^{1-\frac1n}\le C\int_{S_j}|D_{\tau} f_m|^{n-1}\,
d\haus.
$$

\textcolor{black}{\underline{Step 8. Not that many big bubbles where $f_m$ and $g_m$ have different degree}:} 
Choose $a>0$ and set 
$$
\aligned
J^+=\{j\colon \int_{S_j}|D_{\tau} f_m|^{n-1}\,d\haus>a\}, \\
J^-=\{j\colon \int_{S_j}|D_{\tau} f_m|^{n-1}\,d\haus\le a\}. 
\endaligned
$$
Hence
\eqn{rr}
$$
\aligned
\sum_{j\in J^-}|E_j|&\le C\sum_{j\in J^-}\Big(\int_{S_j}|D_{\tau} f_m|^{n-1}\,d\haus\Big)^{\frac{n}{n-1}}
\\&\le Ca^{\frac1{n-1}}\sum_{j\in J^-}\int_{S_j}|D_{\tau} f_m|^{n-1}\,d\haus
\\&
\le Ca^{\frac1{n-1}}\int_{\partial K}|D_{\tau} f_m|^{n-1}\,d\haus\le C_4 a^{\frac1{n-1}},
\endaligned
$$
where $C_4=CC_2$. We fix $a$ such that 
\eqn{rrr}
$$
C_4a^{\frac1{n-1}}\le\Phi( \kappa|B_U|).
$$
We set 
$$
W=f_m^{-1}\Big(\bigcup_{j\in J^-}E_j\Big).
$$
and using \eqref{alphacomp}, \eqref{rr} and \eqref{rrr} we obtain 
\eqn{Ej}
$$
|W|<\kappa|B_U|.
$$

We have
\eqn{defM}
$$
\# J^+\le M:=\frac{C_2}{a}.
$$

\textcolor{black}{\underline{Step 9. A big part of $B_U$ is mapped into big bubbles, a big part of $B_V$ stays away from them}:} 
Now, consider the situation in $B_U$ and $B_V$.
Set 
$$
\aligned
X&=\{x\in B_U\setminus W\colon \deg(g_m,K,f_m(x))\ne 0\},\\
Y&=\{x\in B_V\setminus W\colon \deg(g_m,K,f_m(x))= 0\}.
\endaligned
$$
Using definition of $X$, definition of $U$ \eqref{UV} and \eqref{W'}
$$
\begin{aligned}
B_U\setminus X
&\subset W\cup (B_U\setminus U) 
\cup\{x\in B_U\colon \deg(g_m,K,f_m(x))= 0,\ \Deg(f,K,f(x))\neq 0\}\\
&\subset 
W\cup (B_U\setminus U) 
\cup\{x\in B_U\colon \deg(g,K,f_m(x))= 0,\ \Deg(f,K,f(x))\neq 0\}\cup \{f_m(x)\notin H\}\\
&\subset W\cup (B_U\setminus U)\cup\{\deg(g,K,f(x))\ne \Deg(f,K,f(x))\} \cup\\
&\phantom{\subset}\cup\{\deg(g,K,f_m(x))\ne \deg(g,K,f(x))\}\cup \{f_m(x)\notin H\}.\\
\end{aligned}
$$
Then by \eqref{Udelta1}, \eqref{Ej} and  \eqref{Fg} 
$$
\aligned
|W\cup (B_U\setminus U) |<2\kappa|B_U|\text{ and }\\
| \{\deg(g,K,f(x))\ne \Deg(f,K,f(x))\}|<\kappa|B_U|.
\endaligned
$$
Since the set $\{y\colon \deg(g,K,y)=0\}$ is open and 
$f_m\to f$ a.e., we can take 
$m$ so large that 
$$
|\{\deg(g,K,f_m(x))\ne \deg(g,K,f(x))\}|<\kappa|B_U|.
$$
Finally using \eqref{defK} and \eqref{reverse} (for $f_m$ since $\int \ff(J_{f_m}))\leq C_1$) we obtain 
$$
|\{f_m(x)\notin H\}|\leq \kappa|B_U|
$$
and all these inequalities together give us 
$$
|B_U\setminus X|\le 5\kappa|B_U|.
$$
Similarly using  
$$
\begin{aligned}
B_V\setminus Y
&\subset W\cup (B_V\setminus V) 
\cup\{x\in B_V\colon \deg(g_m,K,f_m(x))\neq 0,\ \Deg(f,K,f(x))= 0\}\\
&\subset W\cup (B_V\setminus V) \cup\{\deg(g,K,f(x))\ne \Deg(f,K,f(x))\} \cup\\
&\phantom{\subset}\cup\{\deg(g,K,f_m(x))\ne \deg(g,K,f(x))\}\cup \{f_m(x)\notin H\}.\\
\end{aligned}
$$
we obtain
$$
|B_V\setminus Y|\le 5\kappa|B_V|.
$$

\textcolor{black}{\underline{Step 10. Concluding the proof for Case (a)}:} 
By \eqref{howdelta1} we have
$$
\textcolor{black}{|}\Pi(B_U\cap X)\textcolor{black}{|}>\frac78\textcolor{black}{|}\Pi(B_U)\textcolor{black}{|},\quad \textcolor{black}{|}\Pi(B_V\cap Y)\textcolor{black}{|}>\frac78\textcolor{black}{|}\Pi(B_V)\textcolor{black}{|},
$$
so that 
\eqn{bigprojection}
$$
|P|>\frac34|\Pi (B_V)|,
$$
where
$$
P=\Pi(B_U\cap X)\cap \Pi(B_V\cap Y).
$$

Consider the segment parallel to the $x_1$-axis that connects $x'\in \textcolor{black}{B_U\cap}X$
with  $x''\in \textcolor{black}{B_V\cap}Y$. We have
$$
\deg(g_{m},K,f_m(x'))\ne\deg(g_m,K,f_m(x''))=0.
$$

\begin{figure}
\phantom{a}
\vskip 220pt
{\begin{picture}(0.0,0.0) 
     \put(-190.2,0.2){\includegraphics[width=0.90\textwidth]{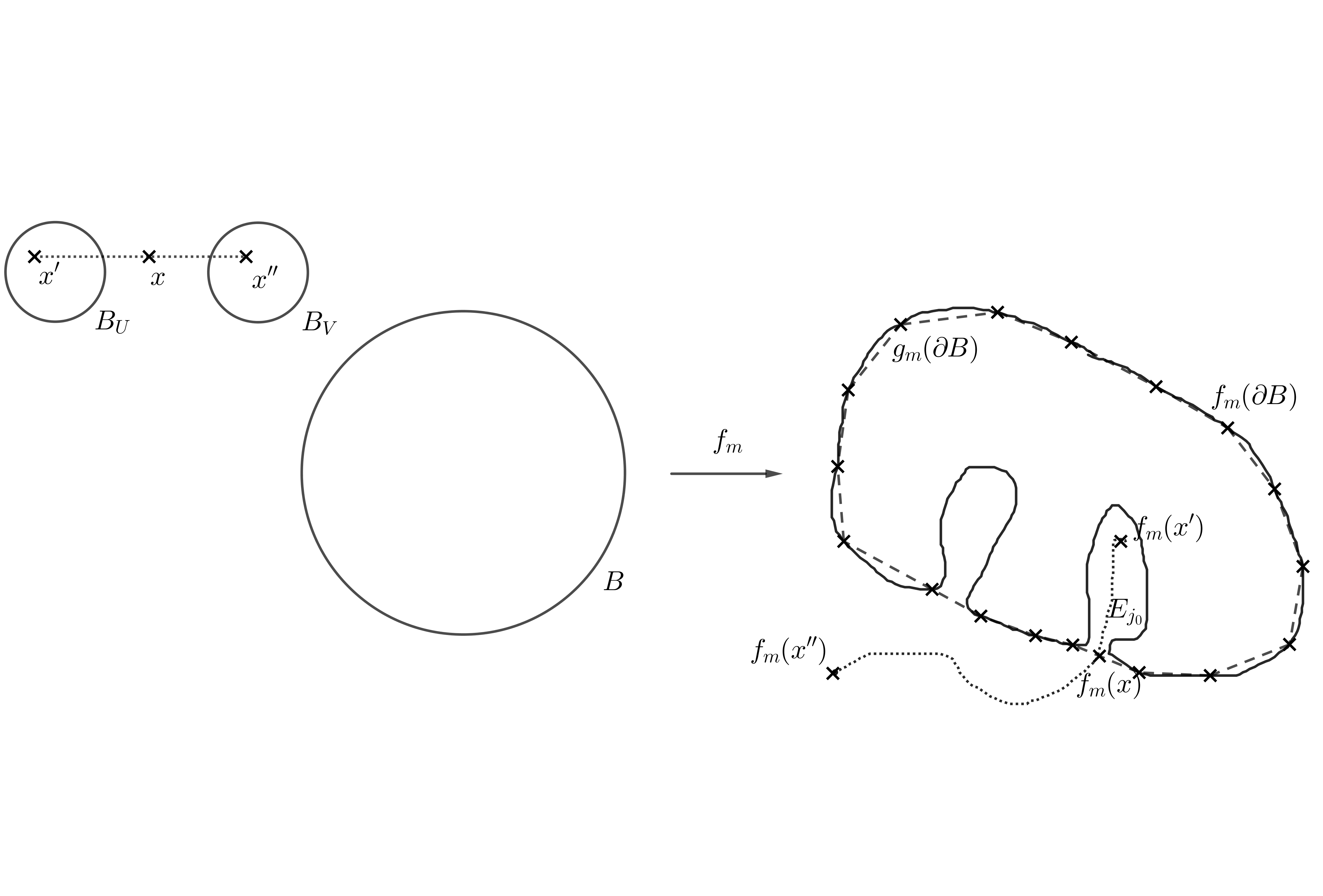}}
  \end{picture}
  }
\vskip -50pt	
\caption{2D representation of the segment $[x',x'']$ and its image (see dotted curves). $f_m(\partial K)$ is a full curve, $T_j$ corresponds to points on $f_m(\partial K)$, $g_m$ is represented by dashed lines connecting these points.}\label{defx}
\end{figure} 

Since $x''$, $x'\notin W=f_m^{-1}(\bigcup_{j\in J^-}E_j)$ 
there exists $j\in J^+$ such that (see Fig. \ref{defx})
$$
\deg(g_{m,j},K,f_m(x'))\ne\deg(g_{m,j},K,f_m(x''))).
$$
Hence there exists $x$ between $x''$ and $x'$ such that
$f_m(x)\in\partial E_j$ (see Fig. \ref{defx}).  
Since $\partial E_j\subset f_m(\overline S_j)\cup g_m(\overline S_j)$ and 
$f_m(x)\notin f_m(\partial K)$ as $x\in \Omega\setminus\overline K$ and $f_m$ is a homeomorphism, 
it follows that
$f_m(x)\in  g_m(\overline S_j) $. 
Using \eqref{bigprojection} and \eqref{defM} we can fix $j_0\in J^+$ such that
$$
\Bigl| \Pi\bigl(\{x\in\Omega\setminus K\colon f_m(x)\in g_m(\overline S_{j_0})\}\bigr)\Bigr|\ge \frac3{4M}|\Pi(B_U)|.
$$
Note that $\diam g_m(\overline S_{j_0})<C\ep$ by \textcolor{black}{\eqref{morrey0}} and Theorem \ref{minimizers}. 
Now, choose $\beta>0$ and use $\ep>0$ so small that the 
$W^{1,n}_0(\Omega')$-capacity of $g_m(\overline S_{j_0})$ in $\Omega'$ 
is smaller than $\beta^n$. It follows that we can find smooth $u\in W_0^{1,n}(\Omega')$ such that $u$ has compact support, $u\equiv 1$ on $g_m(\overline S_{j_0})$ and 
$$
\int_{\Omega'}|Du|^n\,dy\le \beta^n.
$$
It is clear that for each $a\in \Pi\bigl(\{x\in\Omega\setminus K\colon f_m(x)\in g_m(\overline S_{j_0})\}\bigr)$, where $f_m$ is absolutely continuous on the segment $\Pi^{-1}(a)\cap \Omega$, we have 
$$
\int_{\Pi^{-1}(a)\cap \Omega}|Du\circ f_m|\geq 1 
$$
since the function is changing value from $0$ to $1$. 
Therefore, by \eqref{KM}, 
\eqn{zaver}
$$
\frac3{4M}|\Pi(B_U)|\le \int_{\Omega}|D(u\circ f_m)|\,dx\le 
\|D u\|_{L^n(\Omega')}\|K^{\frac{1}{n-1}}_{f_m}\|_{L^1(\Omega)}^{\frac{n-1}{n}}\le \beta C_1^{\frac{n-1}{n}}.
$$
Given $\beta>0$, in the course of the construction we derive $\ep$, then $\rho$ and $m$.
On the other hand, $B_U$, $\kappa$, $a$, $M$ and thus all the left hand side of \eqref{zaver}
do not depend on $\beta$. Thus, by a suitable choice of $\beta$ we obtain a contradiction.
\end{proof}

\section{Counterexample - sharpness of the condition $\frac{1}{J^2_f}\in L^1$}

We use the notation $A\lesssim B$ for $A\leq C\cdot B$, where $C$ is a positive constant which may depend on the dimension $n$ and exponents $a$ and $p$, but not on $\varepsilon$ nor any of the variables. By $A\approx B$ we mean $A\lesssim B$ and $B\lesssim A$.

We first recall some elementary inequalities that we use often in this section. 
For every $y\in[0,1]$ and $p\in(\frac{1}{2},1)$ we have 
$$
1-y^p\textcolor{black}{\leq}1-y
$$
and since the function $y^p$ is concave and its derivative is $p$ at $1$
$$
y^p\leq 1+p(y-1).
$$
Therefore for every $p\in(\frac{1}{2},1)$ we have
\eqn{elementary}
$$
1-y^p\approx 1-y\text{ for every }y\in[0,1]. 
$$
We also use the fact that 
\eqn{gonio}
$$
\sin(\alpha)\approx \alpha\text{ on }[0,\pi/2], \sin(\alpha)\approx \alpha (\pi-\alpha)\text{ on }[0,\pi] \text{ and }\cos(\pi/2-\alpha)\approx\alpha \text{ on }[0,\pi/2].
$$
Note that for $\alpha\in(0,\pi)$ we have the following elementary estimate
\eqn{trojka}
$$
\frac{1}{\sin \alpha}\lesssim \frac{1}{\alpha(\pi-\alpha)}=\frac{1}{\pi}
\left(\frac{1}{\alpha}+\frac{1}{\pi-\alpha}\right). 
$$

\begin{proof}[Proof of Theorem \ref{example}]
{\bf Step 1. Geometrical explanation:} 
We fix a parameter $\varepsilon>0$ small enough, we construct a homeomorphism $f_{\varepsilon}$ and later we choose $f_m$ as $f_{\varepsilon}$ for $\varepsilon=1/m$. 
We define the mapping from spherical coordinates $(r,\alpha,\beta)$ to spherical coordinates. We first define it on $B(0,2)$, i.e. for
$r\in (0,2)$, $\alpha\in (0,\pi)$ and $\beta\in (-\pi,\pi)$. \textcolor{black}{Then we extend it to $B(0,10)\setminus B(0,2)$ so that $f(x,y,z)=(x,y,-z)$ on $\partial B(0,10)$ and then we compose it with a proper reflection.} The mapping has the form 
$$
f_{\varepsilon}\left((r,\alpha,\beta)\right)=\left(\tilde{r}(r,\alpha,\varepsilon),\tilde{\alpha}(r,\alpha,\varepsilon),\beta \right), 
$$ 
i.e. it is enough to define it in the $xz$-plane and then rotate the picture around the \textcolor{black}{$z$-axis} both in the domain and in the target. 

To improve the readability we first give the informal idea about the behaviour of the mapping using pictures and later we give exact formulas. 
In Figure \ref{fm} we show the behaviour of $f_{\varepsilon}$ for $\varepsilon=1/m$ on different spheres in the $xz$-plane.
The outer sphere $\partial B(0,2)$ is mapped onto some drop-shape with $[0,0,0]$ at the very top and this shape is actually the same for all $\varepsilon>0$. 
The behaviour on spheres inside is described for spheres of radius $\frac{1}{2}$ and $\frac{3}{2}$. 
Each sphere $\partial B(0,r)$ inside is divided into two parts - the inner part $I_r$ denoted in a dotted curve and the outer part $O_r$ denoted by a full curve. The boundary between these two regions $W$ is denoted by the thin blue dashed curve and is very important for the behaviour of our map. The image $f_m(O_r)$ is some outer half-drop (denoted by a full curve on the right part of the picture) and the image $f_m(I_r)$ is some inner half-drop (denoted by a dotted curve) so that the image $f_m(B(0,r))$  looks like a "horseshoe". These horseshoes are nested, i.e.  
$f_m(B(0,{r_1}))\subset f_m(B(0,{r_2}))$ for $r_1<r_2$, so that the whole map $f_m$ could be a homeomorphism. Let us describe what happens for $\varepsilon\to 0+$, that is, $m\to\infty$. The tips of all horseshoes (the upper two parts) are approaching the point $[0,0,0]$ on the very top. At the same time $W$ (boundary between inner and outer parts of spheres) is changing drastically but only on $B(0,1)$. The small "pie" on the bottom has very small angle which disappear\textcolor{black}{s} as $\varepsilon\to 0+$ so in the limit there are no outer parts $O_r$ for $r<1$. \textcolor{black}{It is actually possible to do so }with bounded $W^{1,2}$ energy - on each $\partial B(0,r)$, $0<r<1$, we map something like 2D ball or radius $\delta$ (in fact a small spherical cap) to something like 2D ball of radius $1$ with energy $\int_{B^2(0,\delta)}|Dh|^2\approx \hau2(B^2(0,\delta)) |\frac{1}{\delta}|^2\approx 1$. 
\begin{figure}
\phantom{a}
\begin{tikzpicture}
\draw[very thick] (0,0) circle(4);
\draw[dotted] (0,0) circle(2.08);
\draw[orange, very thick] (0,-1.0)arc[start angle=-90, end angle=-86,radius=1] ;
\draw[orange, very thick] (0,-1.0)arc[start angle=-90, end angle=-94,radius=1] ;
\draw[orange, dotted, very thick] (0,0)circle(1);
\draw[green, very thick] (0,-3.0)arc[start angle=-90, end angle=0,radius=3] ;
\draw[green, very thick] (0,-3.0)arc[start angle=-90, end angle=-180,radius=3] ;
\draw[green, dotted, very thick] (0,0)circle(3);
\draw (0,-0.1)--(0,0.1);
\draw (-0.1, 0)--(0.1, 0);
\draw [dashed,blue,  domain=1.04:2, samples=40] 
 plot ({2*\x*(sin(pi*(2-\x) r))}, {2*(\x)*(cos(pi*(2-\x) r))});
 \draw [dashed,blue,  domain=1.04:2, samples=40] 
 plot ({-2*\x*(sin(pi*(2-\x) r))}, {2*(\x)*(cos(pi*(2-\x) r))});
 \draw [dashed,blue,  domain=0:1.04, samples=40] 
 plot ({2*\x*(sin((pi-0.12*\x) r))}, {2*(\x)*(cos((pi-0.12*\x) r))});
  \draw [dashed,blue,  domain=0:1.04, samples=40] 
 plot ({-2*\x*(sin((pi-(0.12*\x)) r))}, {2*(\x)*(cos((pi-(0.12*\x)) r))});
  \draw [dashed,red,  domain=1.04:2, samples=40] 
 plot ({2*\x*(sin((pi*(2-\x)-0.6*\x*(2-\x)/2) r))}, {2*(\x)*(cos((pi*(2-\x)-0.6*\x*(2-\x)/2) r))});
 \draw [dashed,red,  domain=1.04:2, samples=40] 
 plot ({-2*\x*(sin((pi*(2-\x)-0.6*\x*(2-\x)/2) r))}, {2*(\x)*(cos((pi*(2-\x)-0.6*\x*(2-\x)/2) r))});
 \draw [dashed,red,  domain=0:1.04, samples=40] 
 plot ({2*\x*(sin((pi-0.12*\x-0.6*\x/2) r))}, {2*(\x)*(cos((pi-0.12*\x-0.6*\x/2) r))});
  \draw [dashed,red,  domain=0:1.04, samples=40] 
 plot ({-2*\x*(sin((pi-(0.12*\x)-0.6*\x/2) r))}, {2*(\x)*(cos((pi-(0.12*\x)-0.6*\x/2) r))});

\node at (3.3,1) {$W$};
\node at (2.2,-0.8) {$\tilde{W}$};

\node at (0.3,0.6) {$B_{\frac{1}{2}}$};
\node at (0.4,2.6) {$B_{\frac{3}{2}}$};
\node at (3.3,-1.5) {$B_2$};

\draw[->, thick] (4.4,0)--(6,0);
  \node[above] at (5.3, 0) {$f_m$};

    \foreach \x in {9}
    \foreach \y in {0.82}
\draw[very thick, orange]
(\x+0*\y,4+0*\y) to[out=-138,in=50] (\x-1*\y,4-1*\y) to[out=-130,in=90] (\x-2.5*\y,4-5*\y) to[out=-90,in=160]  (\x-1*\y,4-7.8*\y) to[out=-20,in=180] (\x+0*\y,4-8*\y) ;
    \foreach \x in {9}
    \foreach \y in {0.82}
\draw[ very thick, orange]
(\x+0*\y,4+0*\y) to[out=-42,in=-230] (\x+1*\y,4-1*\y) to[out=-50,in=90] (\x+2.5*\y,4-5*\y) to[out=-90,in=20]  (\x+1*\y,4-7.8*\y) to[out=-160,in=0] (\x+0*\y,4-8*\y) ;

    \foreach \x in {9}
    \foreach \y in {0.7}
\draw[very thick, orange, dotted]
(\x+0*\y,4+0*\y) to[out=-138,in=50] (\x-1*\y,4-1*\y) to[out=-130,in=90] (\x-2.5*\y,4-5*\y) to[out=-90,in=160]  (\x-1*\y,4-7.8*\y) to[out=-20,in=180] (\x+0*\y,4-8*\y) ;
    \foreach \x in {9}
    \foreach \y in {0.7}
\draw[ very thick, orange, dotted]
(\x+0*\y,4+0*\y) to[out=-42,in=-230] (\x+1*\y,4-1*\y) to[out=-50,in=90] (\x+2.5*\y,4-5*\y) to[out=-90,in=20]  (\x+1*\y,4-7.8*\y) to[out=-160,in=0] (\x+0*\y,4-8*\y) ;

\draw[draw=white, fill=white] (6.1,1.78) rectangle ++(5,3);

    \foreach \x in {9}
    \foreach \y in {0.9}
\draw[ very thick, green]
(\x+0*\y,4+0*\y) to[out=-138,in=50] (\x-1*\y,4-1*\y) to[out=-130,in=90] (\x-2.5*\y,4-5*\y) to[out=-90,in=160]  (\x-1*\y,4-7.8*\y) to[out=-20,in=180] (\x+0*\y,4-8*\y) ;
    \foreach \x in {9}
    \foreach \y in {0.9}
\draw[ very thick, green]
(\x+0*\y,4+0*\y) to[out=-42,in=-230] (\x+1*\y,4-1*\y) to[out=-50,in=90] (\x+2.5*\y,4-5*\y) to[out=-90,in=20]  (\x+1*\y,4-7.8*\y) to[out=-160,in=0] (\x+0*\y,4-8*\y) ;
 
    \foreach \x in {9}
    \foreach \y in {0.6}
\draw[ very thick, green, dotted]
(\x+0*\y,4+0*\y) to[out=-138,in=50] (\x-1*\y,4-1*\y) to[out=-130,in=90] (\x-2.5*\y,4-5*\y) to[out=-90,in=160]  (\x-1*\y,4-7.8*\y) to[out=-20,in=180] (\x+0*\y,4-8*\y) ;
    \foreach \x in {9}
    \foreach \y in {0.6}
\draw[very thick, green, dotted]
(\x+0*\y,4+0*\y) to[out=-42,in=-230] (\x+1*\y,4-1*\y) to[out=-50,in=90] (\x+2.5*\y,4-5*\y) to[out=-90,in=20]  (\x+1*\y,4-7.8*\y) to[out=-160,in=0] (\x+0*\y,4-8*\y) ;

\draw[draw=white, fill=white] (6.1,2) rectangle ++(5,3);

    \foreach \x in {9}
    \foreach \y in {-1}
\draw[ very thick, green, dotted]
(\x+1.64*\y,2) to[out=42,in=130] (\x+1.45*\y,2.09) to[out=-42,in=130] (\x+1.37*\y,2) ;
    \foreach \x in {9}
    \foreach \y in {1}
\draw[ very thick, green, dotted]
(\x+1.64*\y,2) to[out=138,in=50] (\x+1.45*\y,2.09) to[out=-138,in=50] (\x+1.37*\y,2) ;

    \foreach \x in {9}
    \foreach \y in {-1}
\draw[ very thick, orange, dotted]
(\x+1.65*\y,1.78) to[out=42,in=130] (\x+1.55*\y,1.78) ;
    \foreach \x in {9}
    \foreach \y in {1}
\draw[ very thick, orange, dotted]
(\x+1.65*\y,1.78) to[out=138,in=50] (\x+1.55*\y,1.78) ;

    \foreach \x in {9}
    \foreach \y in {1}
\draw[ very thick]
(\x+0.012*\y,4+0*\y) to[out=-138,in=50] (\x-1*\y,4-1*\y) to[out=-130,in=90] (\x-2.5*\y,4-5*\y) to[out=-90,in=160]  (\x-1*\y,4-7.8*\y) to[out=-20,in=180] (\x+0*\y,4-8*\y) ;
    \foreach \x in {9}
    \foreach \y in { 1}
\draw[ very thick]
(\x-0.012*\y,4+0*\y) to[out=-42,in=-230] (\x+1*\y,4-1*\y) to[out=-50,in=90] (\x+2.5*\y,4-5*\y) to[out=-90,in=20]  (\x+1*\y,4-7.8*\y) to[out=-160,in=0] (\x+0*\y,4-8*\y) ;

\node at (9.8,4.2) {$[0,0,0]$};
\node at (12.3,-1.5) {$f_m(B_2)$};


\end{tikzpicture}
\caption{Mapping $f_{m}$ and its behaviour on spheres of radius $\frac{1}{2}$, $\frac{3}{2}$ and $2$.}\label{fm}
\end{figure}
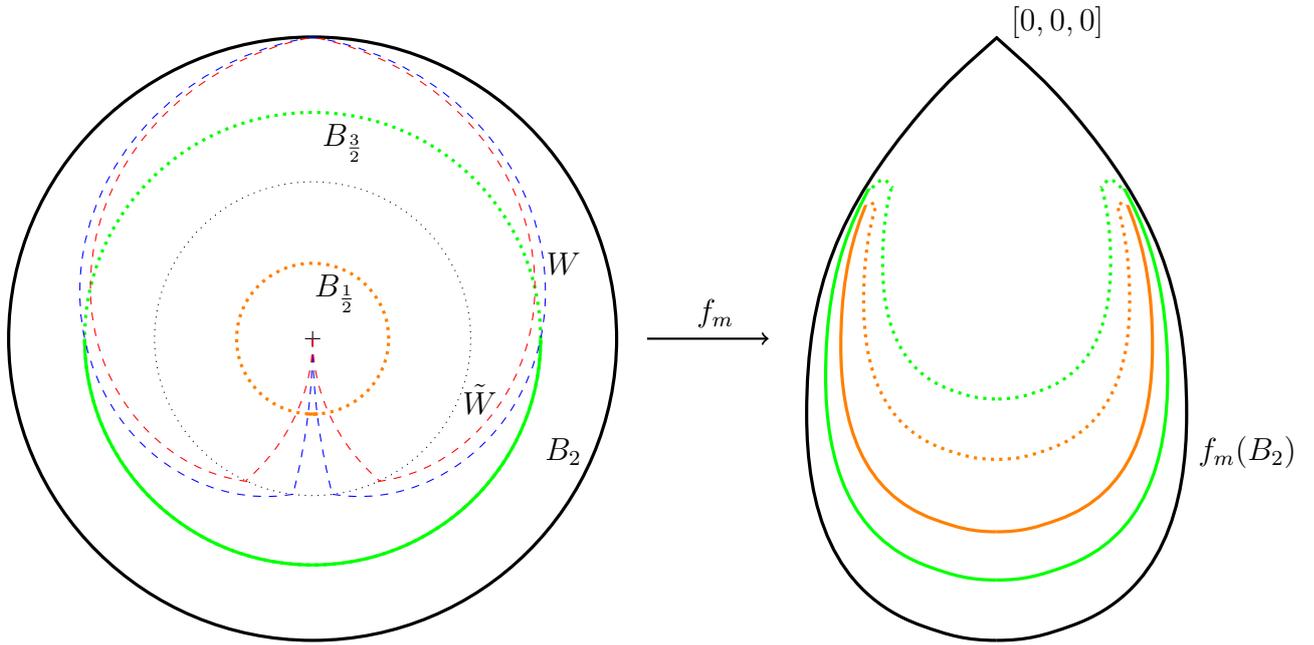

\begin{figure}
\begin{tikzpicture}
\draw[very thick] (0,0) circle(4);
\draw[dotted] (0,0) circle(2);
\draw[orange, dotted, very thick] (0,0)circle(1);
\draw[green, very thick] (0,-3.0)arc[start angle=-90, end angle=0,radius=3] ;
\draw[green, very thick] (0,-3.0)arc[start angle=-90, end angle=-180,radius=3] ;
\draw[green, dotted, very thick] (0,0)circle(3);
\draw (0,-0.1)--(0,0.1);
\draw (-0.1, 0)--(0.1, 0);
\draw [dashed,blue,  domain=1.0:2, samples=40] 
 plot ({2*\x*(sin(pi*(2-\x) r))}, {2*(\x)*(cos(pi*(2-\x) r))});
 \draw [dashed,blue,  domain=1.0:2, samples=40] 
 plot ({-2*\x*(sin(pi*(2-\x) r))}, {2*(\x)*(cos(pi*(2-\x) r))});
 \draw [dashed,blue,  domain=0:1.0, samples=40] 
 plot ({2*\x*(sin((pi) r))}, {2*(\x)*(cos((pi) r))});

\node at (3.3,1) {$W$};

\node at (0.3,0.6) {$B_{\frac{1}{2}}$};
\node at (0.4,2.6) {$B_{\frac{3}{2}}$};
\node at (3.3,-1.5) {$B_2$};

\draw[->, thick] (4.4,0)--(6,0);
  \node[above] at (5.3, 0) {$f$};

    \foreach \x in {9}
    \foreach \y in {0.9}
\draw[very thick, green]
(\x+0*\y,4+0*\y) to[out=-138,in=50] (\x-1*\y,4-1*\y) to[out=-130,in=90] (\x-2.5*\y,4-5*\y) to[out=-90,in=160]  (\x-1*\y,4-7.8*\y) to[out=-20,in=180] (\x+0*\y,4-8*\y) ;
    \foreach \x in {9}
    \foreach \y in {0.9}
\draw[ very thick, green]
(\x+0*\y,4+0*\y) to[out=-42,in=-230] (\x+1*\y,4-1*\y) to[out=-50,in=90] (\x+2.5*\y,4-5*\y) to[out=-90,in=20]  (\x+1*\y,4-7.8*\y) to[out=-160,in=0] (\x+0*\y,4-8*\y) ;

    \foreach \x in {9}
    \foreach \y in {0.7}
\draw[ very thick, orange, dotted]
(\x+0*\y,4+0*\y) to[out=-138,in=50] (\x-1*\y,4-1*\y) to[out=-130,in=90] (\x-2.5*\y,4-5*\y) to[out=-90,in=160]  (\x-1*\y,4-7.8*\y) to[out=-20,in=180] (\x+0*\y,4-8*\y) ;
    \foreach \x in {9}
    \foreach \y in {0.7}
\draw[very thick, orange, dotted]
(\x+0*\y,4+0*\y) to[out=-42,in=-230] (\x+1*\y,4-1*\y) to[out=-50,in=90] (\x+2.5*\y,4-5*\y) to[out=-90,in=20]  (\x+1*\y,4-7.8*\y) to[out=-160,in=0] (\x+0*\y,4-8*\y) ;

    \foreach \x in {9}
    \foreach \y in {0.6}
\draw[ very thick, green, dotted]
(\x+0*\y,4+0*\y) to[out=-138,in=50] (\x-1*\y,4-1*\y) to[out=-130,in=90] (\x-2.5*\y,4-5*\y) to[out=-90,in=160]  (\x-1*\y,4-7.8*\y) to[out=-20,in=180] (\x+0*\y,4-8*\y) ;
    \foreach \x in {9}
    \foreach \y in {0.6}
\draw[very thick, green, dotted]
(\x+0*\y,4+0*\y) to[out=-42,in=-230] (\x+1*\y,4-1*\y) to[out=-50,in=90] (\x+2.5*\y,4-5*\y) to[out=-90,in=20]  (\x+1*\y,4-7.8*\y) to[out=-160,in=0] (\x+0*\y,4-8*\y) ;

    \foreach \x in {9}
    \foreach \y in {1}
\draw[ very thick]
(\x+0.012*\y,4+0*\y) to[out=-138,in=50] (\x-1*\y,4-1*\y) to[out=-130,in=90] (\x-2.5*\y,4-5*\y) to[out=-90,in=160]  (\x-1*\y,4-7.8*\y) to[out=-20,in=180] (\x+0*\y,4-8*\y) ;
    \foreach \x in {9}
    \foreach \y in { 1}
\draw[ very thick]
(\x-0.012*\y,4+0*\y) to[out=-42,in=-230] (\x+1*\y,4-1*\y) to[out=-50,in=90] (\x+2.5*\y,4-5*\y) to[out=-90,in=20]  (\x+1*\y,4-7.8*\y) to[out=-160,in=0] (\x+0*\y,4-8*\y) ;

\node at (9,-1.2) {$f_T(B_{\frac{1}{2}})$};
\node at (12.3,-1.5) {$f(B_2)$};


\end{tikzpicture} 
\caption{Limit mapping $f$ and its behaviour on spheres of radius $\frac{1}{2}$, $\frac{3}{2}$ and $2$.}\label{flimit}
\end{figure}
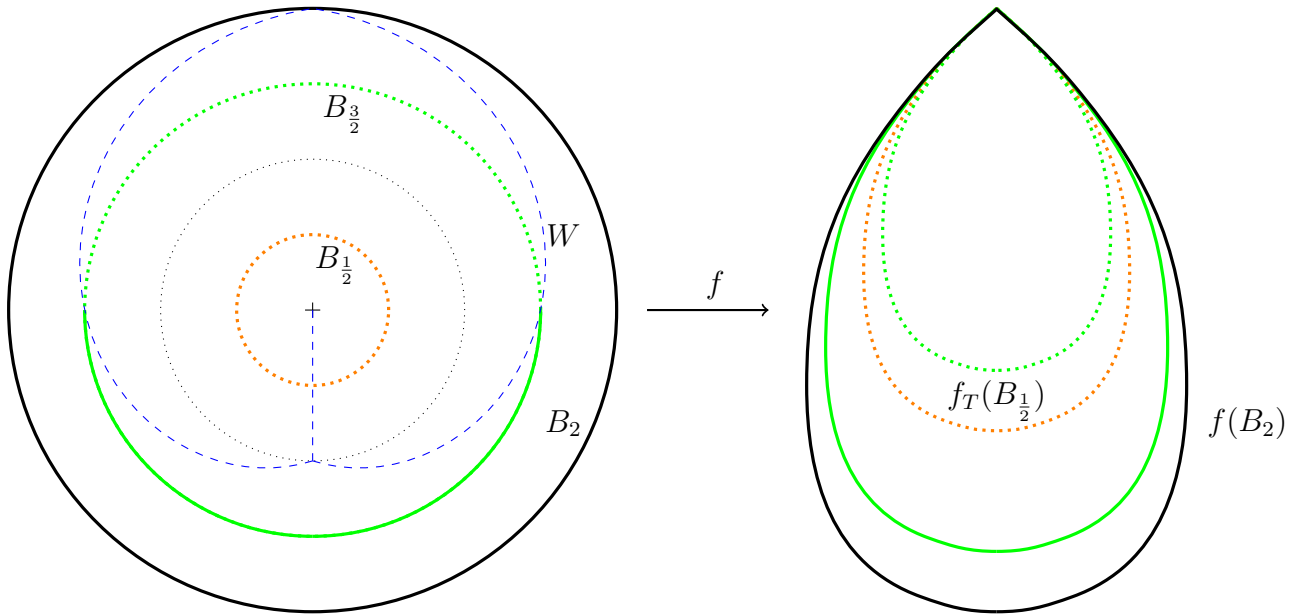

The behaviour of limit mapping is depicted in Figure \ref{flimit}. We will show that $f_m$ forms a bounded sequence in $W^{1,2}$ (and also that $\int \frac{1}{J_f^a}$ is bounded) so there is a subsequence which converges weakly to the pointwise limit $f$. 
All "horseshoes" $f(B(0,r))$ have two tips that go up to the point $[0,0,0]$. Let us describe in details the behaviour of $f$ on $\overline{B(0,\frac{1}{2})}$ and why the limit fails \textcolor{black}{to satisfy} the (INV) condition there. The boundary $\partial B(0,\frac{1}{2})$ has only inner part $I_{\frac{1}{2}}$ and there is no outer part so the image $f(\partial B(0,\frac{1}{2}))$ consists only from the dotted orange drop on the right-hand side of the picture. It follows that $f_T(B(0,\frac{1}{2}))$ is equal to the inner part of this (rotated) drop and it is not difficult to check that the degree actually equals $-1$ there as  we have changed the orientation of the sphere. However for $x\in B(0,\frac{1}{2})$ we know that $f(x)$ does not belong to to $f_T(B(0,\frac{1}{2}))$ as it is mapped outside of this drop, in fact for $f_m$ we had the outer drop $f_m(O_r)$ and $f_m(B(0,r))$ lies between $f_m(O_r)$ and $f_m(I_r)$ so in the limit outside of $f(I_r)$ (which is the limit of $f_m(I_r)$). 

{\bf Step 2. Formal definitions:} 
We first define the set $W$ between the inner and outer parts of $\partial B(0,r)$ and then we divide $B(0,2)$ into different regions accordingly. 
We set $r_\varepsilon^1=1+\frac{\varepsilon}{\pi-\varepsilon}$ and
$$
S_\varepsilon=\begin{cases}
\pi-\varepsilon r, & 0<r<r_\varepsilon^1,\\
(2-r)\pi,& r_\varepsilon^1<r<2\\
\end{cases}
$$
and our $W$ is defined as (see the blue curve in Fig. \ref{areas})
$$
W:=\Bigl\{(r,\alpha): \alpha=S_{\varepsilon}\Bigr\}. 
$$
This formula 
 corresponds to the blue curve on the right half of Fig. \ref{areas} while the blue curve on the left side is created by rotation around the $z$-axes. Note that $r_\varepsilon^1\to 1$ as $\varepsilon\to 0$.
\begin{figure}[h t p]
\begin{tikzpicture}
\draw (0,0) circle(4);
\draw (0,0) circle(2.08);
\draw[dotted] (0,0) circle(0.8);
\draw (0,-0.1)--(0,0.1);
\draw (-0.1, 0)--(0.1, 0);
\draw [blue,  domain=1.04:2, samples=40] 
 plot ({2*\x*(sin(pi*(2-\x) r))}, {2*(\x)*(cos(pi*(2-\x) r))});
 \draw [blue,  domain=1.04:2, samples=40] 
 plot ({-2*\x*(sin(pi*(2-\x) r))}, {2*(\x)*(cos(pi*(2-\x) r))});
 \draw [blue,  domain=0:1.04, samples=40] 
 plot ({2*\x*(sin((pi-0.12*\x) r))}, {2*(\x)*(cos((pi-0.12*\x) r))});
  \draw [blue,  domain=0:1.04, samples=40] 
 plot ({-2*\x*(sin((pi-(0.12*\x)) r))}, {2*(\x)*(cos((pi-(0.12*\x)) r))});
 
 \draw [red,  domain=1.04:2, samples=40] 
 plot ({2*\x*(sin((pi*(2-\x)-0.6*\x*(2-\x)/2) r))}, {2*(\x)*(cos((pi*(2-\x)-0.6*\x*(2-\x)/2) r))});
 \draw [red,  domain=1.04:2, samples=40] 
 plot ({-2*\x*(sin((pi*(2-\x)-0.6*\x*(2-\x)/2) r))}, {2*(\x)*(cos((pi*(2-\x)-0.6*\x*(2-\x)/2) r))});
 \draw [red,  domain=0:1.04, samples=40] 
 plot ({2*\x*(sin((pi-0.12*\x-0.6*\x/2) r))}, {2*(\x)*(cos((pi-0.12*\x-0.6*\x/2) r))});
  \draw [red,  domain=0:1.04, samples=40] 
 plot ({-2*\x*(sin((pi-(0.12*\x)-0.6*\x/2) r))}, {2*(\x)*(cos((pi-(0.12*\x)-0.6*\x/2) r))});
 
\node at (1,1) {$A_1$};
\node at (3.3,1) {$W$};
\node at (1,3) {$A_2$};
\node at (1,-3) {$D_2$};
\draw (0.3,-1.7)--(4,3.8);
\draw (2.55,-0.9)--(5,3);
\draw (0,-1.7)--(-5,3);
\node at (4.2,3.8) {$B$};
\node at (5.25,3) {$C$};
\node at (-5.3,3) {$D_1$};
\end{tikzpicture}
\caption{Definition of $W$ and different areas.}\label{areas}
\end{figure}
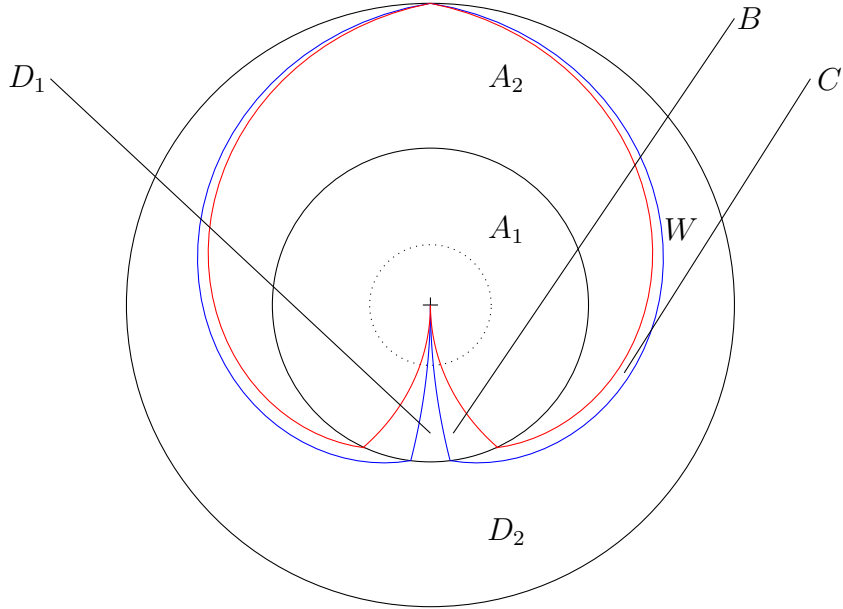
Given $a<2$ we fix $\textcolor{black}{p\in \left(\frac{1}{2}, 1\right)}$ such that
\eqn{choosep}
$$
a(1-3p)>-1.
$$
Now we define the "thickness between the blue curve and the red curve" as  
$$
\delta(\varepsilon, r)=
\begin{cases}
\varepsilon^{1/p} r & 0<r<r_\varepsilon^1,\\
c_0\varepsilon^{1/p}(2-r)^\lambda &r_\varepsilon^1<r<2,\\
\end{cases}
$$
where 
\eqn{chooselambda}
$$
\lambda =\frac{2}{1+a-3ap} \geq \frac{2}{1+a-3a/2}=\frac{2}{1-a/2}> 2 \text{ and }c_0=\frac{\pi}{\pi-\varepsilon}\cdot\frac{(\pi-\varepsilon)^\lambda}{(\pi-2\varepsilon)^\lambda}\approx 1,
$$
so that $\delta$ is continuous at $r_{\epsilon}^1$.  
Finally, we define the red curve on the picture as 
$$
\tilde{W}:=\Bigl\{(r,\alpha): \alpha=\tilde{S}_{\varepsilon}\Bigr\}. 
\text{ where }
\tilde{S}_\varepsilon=S_\varepsilon-\delta(\varepsilon, r). 
$$
Now we can define the regions in Fig. \ref{areas} as
$$
\begin{aligned}
A_1&:=\left\{(r,\alpha):\ r\in\left(0,r_\varepsilon^1\right),\ \alpha<\tilde{S}_{\varepsilon}\right\},\\
B&:=\left\{(r,\alpha):\ r\in\left(0,r_\varepsilon^1\right),\ \tilde{S}_{\varepsilon}<\alpha<S_{\varepsilon}\right\},\\
D_1&:=\left\{(r,\alpha):\ r\in\left(0,r_\varepsilon^1\right),\ S_{\varepsilon}<\alpha\right\},\\
A_2&:=\left\{(r,\alpha):\ r\in\left(r_\varepsilon^1,2\right),\ \alpha<\tilde{S}_{\varepsilon}\right\},\\
C&:=\left\{(r,\alpha):\ r\in\left(r_\varepsilon^1,2\right),\ \tilde{S}_{\varepsilon}<\alpha<S_{\varepsilon}\right\}\text{ and }\\
D_2&:=\left\{(r,\alpha):\ r\in\left(r_\varepsilon^1,2\right),\ S_{\varepsilon}<\alpha\right\}.\\
\end{aligned}
$$
Note that we always define only the part of the region in the right part of Fig. \ref{areas} and the corresponding left-part is created by rotation around the $z$-axes (or mirroring). 

Our mapping $f_{\varepsilon}:(r, \alpha, \beta)\mapsto (\tilde{r}, \tilde{\alpha}, \tilde{\beta})$ is defined as 
\eqn{defzobr}
$$
\begin{aligned}
    \tilde{r}&=R_{\varepsilon}(r, \alpha)\cos(T_{\varepsilon}(r, \alpha))\\
    \tilde{\alpha}&=R_{\varepsilon}(r, \alpha)T_{\varepsilon}(r, \alpha)\\
    \tilde{\beta}&=\beta,
\end{aligned}
$$
where we define $R_{\varepsilon}$ and $T_{\varepsilon}$ below. 
\textcolor{black}{Informally speaking, we deform a sphere into a horseshoe with inner and outer part. Were those half-circles, it would be natural to parametrize them in polar coordinates. However, as we work with half-drops, we use another way. Our $R_{\varepsilon}\in [0,1]$ could be viewed as some "radius of the drop in the image" and $T_{\varepsilon}\in[0,\frac{\pi}{2}]$ corresponds to some "angle or parametrization of the boundary of the drop", but instead of using $[R_{\varepsilon}\cos T_{\varepsilon},R_{\varepsilon}\sin T_{\varepsilon}]$ as in the case of polar coordinates we use $[R_{\varepsilon}\cos T_{\varepsilon},R_{\varepsilon}T_{\varepsilon}]$ as it fits us better.}
We want to keep our formulas as simple as possible: we define the functions piecewise on regions $A_1, A_2, B, C, D_1, D_2$. We keep $R_\varepsilon$ to be the same as our limit mapping on $A_1$ and $D_2$ and very close on $A_2$ and $D_1$. We use $B$ and $C$ to continuously connect the values on these regions (by a linear convex combination). 

We define our $R_{\varepsilon}\in[0,1]$ as 
$$
R_\varepsilon=
\begin{cases}
\frac{2-r}3, & \text{ on }A_1,\\
\sqrt{\frac{\pi-2\varepsilon}{\pi-\varepsilon}}\cdot\frac{\sqrt{2-r}}{3}, &\text{ on } A_2,\\
\frac{2}{3}+\frac{\varepsilon r}{3\pi}, &\text{ on } D_1,\\
\frac{1+r}{3}, & \text{ on }D_2,\\
\frac{2-r}{3}\cdot \frac{S-\alpha}{\delta(\varepsilon, r)}+\left(\frac{2}{3}+\frac{\varepsilon r}{3\pi}\right)\left(1-\frac{S-\alpha}{\delta(\varepsilon, r)}\right), & \text{ on } B,\\
\sqrt{\frac{\pi-2\varepsilon}{\pi-\varepsilon}}\cdot\frac{\sqrt{2-r}}{3}\cdot \frac{S-\alpha}{\delta(\varepsilon, r)} +\frac{1+r}{3}\left(1-\frac{S-\alpha}{\delta(\varepsilon, r)}\right), &\text{ on } C,\\
\end{cases}
$$
Note that $R_{\epsilon}$ is continuous and the values on boundaries between regions (like for $r=r_{\epsilon}^1$) agree. 
To define $T_{\varepsilon}$ we need two additional auxiliary functions. 
The first one $\xi_\varepsilon\in[0,1]$ measures how close we are to the critical strip between the blue line $W$ and the red line $\tilde{W}$ and is equal to $0$ exactly on the strip:
$$
\xi_\varepsilon  (r,\alpha)=
\begin{cases}
1-\frac{\alpha}{\tilde{S}_\varepsilon},& \text{ on }A_1\cup A_2 \text{ (i.e., on $\alpha<\tilde{S}_\varepsilon$)},\\
0,& \text{ on } B\cup C \text{ (i.e., on $\tilde{S}_\varepsilon\leq\alpha\leq S_\varepsilon$)},\\
1-\frac{\pi-\alpha}{\pi-S_\varepsilon},&\text{ on } D_1\cup D_2 \text{ (i.e., on $S_\varepsilon<\alpha$)}.
\end{cases}
$$
 
Let us define $r_\varepsilon^0=\frac{\varepsilon-2\varepsilon^2}{1-\varepsilon^2}\approx \varepsilon$ so that functions $\frac{r}{\varepsilon}$ and $1-(2-r)\varepsilon$ are equal at this point. 
The second one (recall that $p\in(\frac{1}{2},1)$ and $\lambda>2$ were chosen in \eqref{choosep} and \eqref{chooselambda}) 
$$
\psi(\varepsilon, r)=\begin{cases}
\frac{r}{\varepsilon}&\text{ for }r\in [0,r_\varepsilon^0],\\
1-\varepsilon(2-r) &\text{ for }r\in [r_\varepsilon^0,r_\varepsilon^1],\\
1-\left(\frac{\pi-2\varepsilon}{\pi-\varepsilon}\right)^{1-\lambda p}\varepsilon (2-r)^{\lambda p} &\text{ for }r\in [r_\varepsilon^1,2],\\
\end{cases}
$$
is influencing the shape of the ``horseshoes'' (see Fig. \ref{fm}). 
For $\psi(\varepsilon, r)=1$ the horseshoe is coming up to the point $[0,0,0]$ so we want $\lim_{\varepsilon\to 0+}\psi(\varepsilon, r)=1$, but $\psi(\varepsilon, r)<1$ (to have injectivity). Moreover, the definition $\psi(\varepsilon, 0)=0$ and  $\psi(\varepsilon, r)$ small for $r$ small ensures that for really small $r$ our horseshoes are small so that $f_{\varepsilon}$ is continuous at the origin. 

We set
$$
T_\varepsilon=\frac\pi2(1-\xi_\varepsilon^{p})\psi(\varepsilon, r)\in\Bigl[0,\frac{\pi}{2}\Bigr]. 
$$
Note that for $\xi_\varepsilon$ close to $0$ (i.e. close to blue-red strip) and for $\psi(\varepsilon, r)$ close to $1$ we have $T_\varepsilon$ close to $\frac{\pi}{2}$ and thus by \eqref{defzobr} we obtain that $\tilde{r}$ is close to $0$, i.e. the image of our point is close to $[0,0,0]$. Note that our $f_\varepsilon$ is continuous up to the boundary of  $\overline{B(0,2)}$. For simplicity of notation we sometimes omit the subscript $\epsilon$ and we write only $R$, $\xi$ and $T$ and not $R_{\epsilon}$, $\xi_\varepsilon$ and $T_{\epsilon}$. 


{\bf Step 3. Continuity and injectivity:} It is easy to check that our $f_{\varepsilon}$ is continuous on all regions. 
Moreover, it is not difficult to check that on boundaries between two regions the values are the same from both sides and hence our $f_{\varepsilon}$ is continuous. 

It is also not difficult to check that $f_{\varepsilon}$ restricted to each boundary $\partial A_1$, $\partial A_2$, $\partial D_1$, $\partial D_2$, $\partial B$ and $\partial C$ is a homeomorphism. For that purpose we will extend $R_\varepsilon$ and $T_\varepsilon$ on $S(0,2)\cup \{[0,0,0]\}$:
\begin{align*}
R_\varepsilon &= 1, \psi(\varepsilon, r)=1, \xi_\varepsilon(r, \alpha)=\frac{\alpha}{\pi} \text{ on } S(0,2)\setminus \{[0,0,2]\}\\
R_\varepsilon &= 0, T_\varepsilon = 0 \text{ on } \{[0,0,2]\}\\
R_\varepsilon &= \frac{2}{3}, T_\varepsilon = 0 \text{ on } \{[0,0,0]\}.
\end{align*}
There are two points where the extension of $\xi$ is not defined, points $[0,0,0]$ and $[0,0,2]$. Apart \textcolor{black}{from} them we have continuous functions.

Let us now prove the injectivity on the boundaries, firstly in the planar setting. Assume we have $(R^1, T^1)=(R^2, T^2)$, we want to show that the preimages are the same.

{\underline{1. $R^1=0$ or $R^1=2/3$:}} The only possible preimages in those cases are the points $[0,0,2]$ or $[0,0,0]$, respectively.

{\underline{2. $R\in (0,1]\setminus\{2/3\}$:}} In this case we have $R$ and $T$ determined by the previously used formulas. Also we can uniquely describe the preimage by its polar coordinates $(r,\alpha)\in (0,2]\times [0,\pi]$.
\begin{itemize}
\item $\partial A_1, \partial A_2, \partial D_1, \partial D_2$: Since $R$ is independent of $\alpha$ and injective with respect to $r$, $R^1=R^2$ implies $r^1=r^2$. That gives $\psi^1=\psi^2\neq 0$. From that and $T^1=T^2$ we have $\xi^1=\xi^2$. Again, for fixed $r$ on each of those domains we have that $\xi$ is injective with respect to $\alpha$. Together this gives $\alpha^1=\alpha^2$.
\item $\partial B, \partial C$: Here we have that $T$ is independent of $\alpha$ and injective, as $\psi$ is injective with respect to $r$. So we know that $r^1=r^2$. Since $R$ for fixed $r$ is a convex combination of two distinct numbers, it is therefore injective with respect to $\alpha$ and we are done.
\end{itemize}
Now we address the mapping $(R,T)\mapsto (\tilde{r}, \tilde{\alpha})=(R\cos T, RT)$. We claim that it is injective for $(R,T)\in ((0,1]\times[0,\pi/2])\cup \{(0,0)\}$. Let us have $(R^1, T^1)$, $(R^2, T^2)$ such that $(\tilde{r}^1, \tilde{\alpha}^1)=(\tilde{r}^2, \tilde{\alpha}^2)$. If $(\tilde{r}^1, \tilde{\alpha}^1)=(0,0)$, we know that $R^1=R^2=0$, $T^1=T^2=0$ and the result follows. If $\tilde{\alpha}^1=0$ and $\tilde{r}^1$ is positive, it follows that $T^1=T^2=0$ and $R^1=R^2=\tilde{r}^1$. Otherwise since $\cos T$ is decreasing and $T$ is increasing, we have that $\cos T/T: (0,\pi/2]\to [0, \infty)$ is strictly monotone. Since 
$$
\frac{\cos(T^1)}{T^1}=\frac{\tilde{r}^1}{\tilde{\alpha}^1}=\frac{\tilde{r}^2}{\tilde{\alpha}^2}=\frac{\cos(T^2)}{T^2},
$$ 
we obtain $T^1=T^2$, and so $R^1=R^2$.


When we add the third dimension and rotate, the injectivity does not change and hence our $f_{\epsilon}$ is a homeomorphism on boundaries of different regions. 
Below we estimate the integrability of $J_{f_{\varepsilon}}^{-a}$ and in those estimates we show (as a by-product) that $J_{f_{\varepsilon}}\neq 0$ in all the regions. By Inverse Mapping Theorem it follows that $f_{\varepsilon}$ is locally a homeomorphism and since it is a homeomorphism on the boundaries we obtain that it is a homeomorphism in each of the regions (see e.g. \cite{Kr}). Moreover, it is a homeomorphism on $\partial B(0,2)$ and thus a homeomorphism on $\overline{B(0,2)}$. 






{\bf Step 4. Integrability of $|Df\textcolor{black}{_\varepsilon}|^2$ and $J_{f\textcolor{black}{_\varepsilon}}^{-a}$ on $A_1\cup A_2\cup D_1\cup D_2$:} 

{\underline{Estimate from spherical to spherical coordinates:} }
For mappings from spherical to spherical coordinates that are rotationally symmetric with respect to $\beta$, we have
\eqn{derivace}
$$
\begin{aligned}
&\int_{B(0,2)} \|Df\textcolor{black}{_\varepsilon}\|^2 = 2\pi \int_0^2 \int_0^{\pi} \Bigl[(\partial_r \tilde{r})^2 + (\tilde{r} \partial_r \tilde{\alpha})^2 + \left(\frac{\partial_\alpha \tilde{r}}{r}\right)^2+\left(\frac{\tilde{r} \partial_\alpha \tilde{\alpha}}{r}\right)^2 + \left(\frac{\tilde{r} \sin(\tilde{\alpha})}{r \sin\alpha}\right)^2 \Bigr]\cdot\\
&\phantom{aaaaaaaaaaaaaaaaaaaaaaaaaaaaaaaaaaaaaaaaaaaaaaaaaaaaaaaaaaaaa}\cdot r^2 \sin\alpha \textcolor{black}{\; d\alpha}\;dr  \\
&\approx \int_0^2 \int_0^{\pi} \Bigl{[}r^2 \alpha (\pi-\alpha) \left[(\partial_r (R\cos T))^2 + (R \cos T \partial_r (RT))^2\right]\\
& \phantom{aaa}+ \alpha (\pi-\alpha)\left[(\partial_\alpha (R\cos T))^2 + (R \cos T \partial_\alpha (RT))^2\right]  +  \frac{(R\cos T \sin(RT))^2}{\alpha(\pi-\alpha)}\Bigr{]}  \textcolor{black}{\; d\alpha}\;dr\\
\end{aligned}
$$
and
\eqn{jakobian}
$$
\begin{aligned}
&\int_{B(0,2)} \left|J_{f\textcolor{black}{_\varepsilon}}\right|^{-a}=2\pi \int_0^2 \int_0^{\pi}\left| \partial_r \tilde{r}\cdot \partial_\alpha \tilde{\alpha}-\partial_r \tilde{\alpha}\cdot \partial_\alpha \tilde{r}\right|^{-a}|\tilde{r}^2 \sin(\tilde{\alpha})|^{-a}|r^2 \sin\alpha|^{1+a}\textcolor{black}{\; d\alpha}\; dr\\
&=2\pi \int_0^2 \int_0^{\pi}\left| \partial_r R\cdot \partial_\alpha T-\partial_r T\cdot \partial_\alpha R\right|^{-a} R^{-a} |\cos T + T\sin T|^{-a}\cdot\\
& \phantom{aaaaaaaaaaaaaaaaaaaaaaaaaaaaaaa}\cdot|R^2 ( \cos T)^2 \sin(RT)|^{-a}|r^2 \sin\alpha|^{1+a}\textcolor{black}{\; d\alpha}\;dr\\
&\approx \int_0^2 \int_0^{\pi}\left| \partial_r R\cdot \partial_\alpha T-\partial_r T\cdot \partial_\alpha R\right|^{-a} R^{-3a} |( \cos T)^2 \sin(RT)|^{-a}|r^2 \sin\alpha|^{1+a}\textcolor{black}{\; d\alpha}\;dr.\\
\end{aligned}
$$
Note that the term $|\cos T + T\sin T|$ is bounded both from below and above for $T\in [0,\frac{\pi}{2}]$ so we can estimate it by a constant.

{\underline{Estimate on $A_1\cap \{r>r_\varepsilon^0\}$:} } On this set we have $0<\alpha<\pi-\varepsilon r-\varepsilon^{\frac{1}{p}}r$,
$$
R_{\varepsilon}=\frac{2-r}{3}\text{ and }T_{\varepsilon}=\frac{\pi}{2}\left(1-\left(1-\frac{\alpha}{\pi-\varepsilon r-\varepsilon^{\frac{1}{p}}r}\right)^p\right)(1-(2-r)\varepsilon). 
$$
Let us first estimate 
\eqn{dertalpha}
$$
|\partial_{\alpha} T_{\varepsilon}|=\frac{\pi}{2}p\left(1-\frac{\alpha}{\pi-\varepsilon r-\varepsilon^{\frac{1}{p}}r}\right)^{p-1}\frac{1}{\pi-\varepsilon r-\varepsilon^{\frac{1}{p}}r}(1-(2-r)\varepsilon)
\approx (\pi-\varepsilon r-\varepsilon^{\frac{1}{p}}r-\alpha)^{p-1} 
$$
and 
$$
\begin{aligned}
|\partial_r T_{\varepsilon}|&=\frac{\pi}{2}\left(1-\left(1-\frac{\alpha}{\pi-\varepsilon r-\varepsilon^{\frac{1}{p}}r}\right)^{p}\right)\varepsilon+\\
\phantom{=}&+
\frac{\pi}{2}p\left(1-\frac{\alpha}{\pi-\varepsilon r-\varepsilon^{\frac{1}{p}}r}\right)^{p-1}\frac{\alpha}{(\pi-\varepsilon r-\varepsilon^{\frac{1}{p}}r)^2}(\varepsilon+\varepsilon^{1/p})(1-(2-r)\varepsilon)\\
&\lesssim \varepsilon (\pi-\varepsilon r-\varepsilon^{\frac{1}{p}}r-\alpha)^{p-1}.
\end{aligned}
$$
Using $\cos(\frac{\pi}{2}-y)\approx y$ we get
\eqn{cost}
$$
\begin{aligned}
\cos T &=\cos\left[ \frac{\pi}{2}\left(1-\left(1-\frac{\alpha}{\pi-\varepsilon r-\varepsilon^{\frac{1}{p}}r}\right)^p\right)(1-(2-r)\varepsilon)\right]\\
&\approx \left(1-\frac{\alpha}{\pi-\varepsilon r-\varepsilon^{\frac{1}{p}}r}\right)^p+(2-r)\varepsilon. \\
\end{aligned}
$$
Now we use $R\approx 1$, \textcolor{black}{the} previous line, \eqref{gonio} and $RT\leq \pi/2$, \eqref{elementary} and $p>\frac{1}{2}$ to estimate
$$
\begin{aligned}
\frac{(R\cos T \sin(RT))^2}{\alpha(\pi-\alpha)}
&\approx \frac{\Bigl(\Bigl(1-\frac{\alpha}{\pi-\varepsilon r-\varepsilon^{\frac{1}{p}}r}\Bigr)^p+(2-r)\varepsilon\Bigr)^2  T^2}{\alpha(\pi-\alpha)}\\
&\lesssim\frac{\Bigl(\Bigl(1-\frac{\alpha}{\pi-\varepsilon r-\varepsilon^{\frac{1}{p}}r}\Bigr)^{2p}+\varepsilon^2\Bigr)}{(\pi-\alpha)}
\frac{\Bigl(1-\Bigl(1-\frac{\alpha}{\pi-\varepsilon r-\varepsilon^{\frac{1}{p}}r}\Bigr)^{p}\Bigr)^2}{\alpha}\\
&\lesssim\frac{(\pi-\alpha-\varepsilon r-\varepsilon^{\frac{1}{p}}r)^{2p}+\varepsilon^2}{\pi-\alpha}\frac{\alpha^2}{\alpha}
\leq 1+\frac{\varepsilon^2}{\pi-\alpha}.\\
\end{aligned}
$$
With the help of these estimates, \textcolor{black}{using \eqref{derivace} and $p>\frac{1}{2}$ we get} that 
$$
\begin{aligned}
\int_{A_1\cap\{r>r_\varepsilon^0\}} \|Df\textcolor{black}{_\varepsilon}\|^2
&\lesssim \int_{r_\varepsilon^0}^{r_\varepsilon^1} \int_0^{\pi-\varepsilon r-\varepsilon^{\frac{1}{p}}r} 
\Bigl[\alpha (\pi-\alpha) \left[|\partial_r R|^2+|\partial_r T|^2 + |\partial_{\alpha} T|^2\right]+\\
&\phantom{aaaaaaaaaaaaaaaaaaaaaaaaaaaaaaa} + \frac{(R\cos T \sin(RT))^2}{\alpha(\pi-\alpha)}\Bigr] \textcolor{black}{\; d\alpha} \; dr\\
&\lesssim\int_{0}^{r_\varepsilon^1} \int_0^{\pi-\varepsilon r-\varepsilon^{\frac{1}{p}}r} \left(1+(\pi-\varepsilon r-\varepsilon^{\frac{1}{p}}r-\alpha)^{2p-2}
+\frac{\varepsilon^2}{(\pi-\alpha)}\right) \; d\alpha\; dr  \\
&\lesssim 1+\varepsilon^2\int_0^{r_\varepsilon^1}-\log(\varepsilon r)\; dr\lesssim 1. \\
\end{aligned}
$$
It remains to estimate the Jacobian on $A_1$ using \eqref{jakobian}, $\partial_{\alpha}R=0$ and $R\approx 1$
$$
\int_{A_1\cap\{r>r_\varepsilon^0\}}\left|J_{f\textcolor{black}{_\varepsilon}}\right|^{-a}
\lesssim \int_{r_\varepsilon^0}^{r_\varepsilon^1} \int_0^{\pi-\varepsilon r-\varepsilon^{\frac{1}{p}}r}\left|\partial_{\alpha}T\right|^{-a}\left|(\cos T)^2\sin(RT)\right|^{-a}\left|r^2\sin \alpha\right|^{1+a}\; d\alpha\; dr. 
$$ 
We estimate using \eqref{elementary} and $R\approx 1$
$$
\left|\frac{\sin \alpha}{\sin RT}\right|^a\lesssim \left|\frac{\alpha}{1-\bigl(1-\frac{\alpha}{\pi-\varepsilon r-\varepsilon^{\frac{1}{p}}r}\bigr)^p}\right|^a\lesssim 1. 
$$
Further using \eqref{cost} we obtain
$$
\frac{1}{\cos T}\lesssim \left(1-\frac{\alpha}{\pi-\varepsilon r-\varepsilon^{\frac{1}{p}}r}\right)^{-p}\approx (\pi-\varepsilon r-\varepsilon^{\frac{1}{p}}r-\alpha)^{-p}. 
$$
Together with \eqref{dertalpha} these estimates give us
$$
\begin{aligned}
\int_{A_1\cap\{r>r_\varepsilon^0\}}\left|J_{f\textcolor{black}{_\varepsilon}}\right|^{-a}
&\lesssim \int_{r_\varepsilon^0}^{r_\varepsilon^1} \int_0^{\pi-\varepsilon r-\varepsilon^{\frac{1}{p}}r}|\partial_{\alpha}T|^{-a}\bigl|\cos T\bigr|^{-2a}\; d\alpha\; dr\\
&\lesssim \int_0^{r_\varepsilon^1} \int_0^{\pi-\varepsilon r-\varepsilon^{\frac{1}{p}}r}|(\pi-\varepsilon r-\varepsilon^{\frac{1}{p}}r-\alpha)^{p-1}|^{-a}\bigl|
(\pi-\varepsilon r-\varepsilon^{\frac{1}{p}}r-\alpha)^{p}\bigr|^{-2a}\; d\alpha\; dr
\end{aligned}
$$ 
and our choice of $p$ in \eqref{choosep} implies that this integral is finite. 

{\underline{Estimate on $A_1\cap \{r<r_\varepsilon^0\}$:} }
On this set we have $0<\alpha<\pi-\varepsilon r-\varepsilon^{\frac{1}{p}}r$, $0<r<r_\varepsilon^0\approx \varepsilon$ and 
$$
R_{\varepsilon}=\frac{2-r}{3}\text{ and }T_{\varepsilon}=\frac{\pi}{2}\left(1-\left(1-\frac{\alpha}{\pi-\varepsilon r-\varepsilon^{\frac{1}{p}}r}\right)^p\right)\frac{r}{\varepsilon}. 
$$
Again we first estimate
$$
|\partial_{\alpha} T_{\varepsilon}|=\frac{\pi}{2}p\left(1-\frac{\alpha}{\pi-\varepsilon r-\varepsilon^{\frac{1}{p}}r}\right)^{p-1}\frac{1}{\pi-\varepsilon r-\varepsilon^{\frac{1}{p}}r}\cdot\frac{r}{\varepsilon}
\approx \frac{r}{\varepsilon}(\pi-\varepsilon r-\varepsilon^{\frac{1}{p}}r-\alpha)^{p-1} 
$$
and using \eqref{elementary}
\eqn{qqqq}
$$
\begin{aligned}
|\partial_r T_{\varepsilon}|&=\frac{\pi}{2}\left(1-\left(1-\frac{\alpha}{\pi-\varepsilon r-\varepsilon^{\frac{1}{p}}r}\right)^{p}\right)\frac{1}{\varepsilon}+\\
&\phantom{aaaaaa}+\frac{\pi}{2}p\left(1-\frac{\alpha}{\pi-\varepsilon r-\varepsilon^{\frac{1}{p}}r}\right)^{p-1}\frac{\alpha}{(\pi-\varepsilon r-\varepsilon^{\frac{1}{p}}r)^2}(\varepsilon+\varepsilon^{\frac{1}{p}})\frac{r}{\varepsilon}\\
&\approx \frac{\alpha}{\varepsilon}+\left(\pi-\varepsilon r-\varepsilon^{\frac{1}{p}}r-\alpha\right)^{p-1}r\alpha\lesssim \frac{1}{\varepsilon}\left(\pi-\varepsilon r-\varepsilon^{\frac{1}{p}}r-\alpha\right)^{p-1}.
\end{aligned}
$$
Using \eqref{gonio}, $R\approx 1$ and \eqref{elementary}  
we estimate the last term of the derivative 
$$
\frac{\left(R\cos T \sin(RT)\right)^2}{\alpha(\pi-\alpha)}
\lesssim \frac{R^2 T^2}{\alpha(\pi-\alpha)}
\lesssim
\frac{\left(1-\left(1-\frac{\alpha}{\pi-\varepsilon r-\varepsilon^{\frac{1}{p}}r}\right)^{p}\right)^2\frac{r^2}{\varepsilon^2}}{\alpha(\pi-\alpha)}
\lesssim \frac{r^2}{\varepsilon^2(\pi-\alpha)}.
$$
With the help of these estimates, \textcolor{black}{using \eqref{derivace} and $\partial_{\alpha}R=0$ we get} 
$$
\begin{aligned}
\int_{A_1\cap\{r<r_\varepsilon^0\}} \|Df\textcolor{black}{_\varepsilon}\|^2&\lesssim\int_0^{r_\varepsilon^0} \int_0^{\pi-\varepsilon r-\varepsilon^{\frac{1}{p}}r} 
\left[r^2 \left[|\partial_r R|^2+|\partial_r T|^2 \right]+ |\partial_{\alpha} T|^2+  
\frac{(R\cos T \sin(RT))^2}{\alpha(\pi-\alpha)}\right]  \textcolor{black}{\; d\alpha}\; dr\\
&\lesssim\int_0^{r_\varepsilon^0} \int_0^{\pi-\varepsilon r-\varepsilon^{\frac{1}{p}}r} 
\left[1+\frac{r^2(\pi-\varepsilon r-\varepsilon^{\frac{1}{p}}r-\alpha)^{2p-2}}{\varepsilon^2} +
\frac{r^2}{\varepsilon^2(\pi-\alpha)}\right]
\textcolor{black}{\; d\alpha}\; dr \\
\end{aligned}
$$
and the first part of the integral is finite since $p>\frac{1}{2}$ and $r\lesssim \varepsilon$. The second one we can estimate  as
$$
\frac{1}{\varepsilon^2}\int_0^{r_\varepsilon^0} r^2\int_0^{\pi-\varepsilon r-\varepsilon^{\frac{1}{p}}r} \frac{1}{\pi-\alpha}\; d\alpha\; dr
\lesssim\frac{1}{\varepsilon^2}\int_0^{r_\varepsilon^0} \varepsilon^2 (-\log(\varepsilon r))\; dr\lesssim 1. 
$$

It remains to estimate the Jacobian on $A_2$ using \eqref{jakobian}, $\partial_{\alpha}R=0$ and $R\approx 1$ 
$$
\int_{A_1\cap\{r<r_\varepsilon^0\}}\left|J_{f\textcolor{black}{_\varepsilon}}\right|^{-a}\lesssim \int_0^{r_{\varepsilon}} \int_0^{\pi-\varepsilon r-\varepsilon^{\frac{1}{p}}r}|\partial_{\alpha}T|^{-a}\bigl|(\cos T)^2\sin(RT)\bigr|^{-a}|r^2\sin \alpha|^{1+a}\; d\alpha\; dr. 
$$ 
Using \eqref{qqqq} we obtain 
$$
|\partial_{\alpha}T|\gtrsim (\pi-\varepsilon r-\varepsilon^{\frac{1}{p}}r-\alpha)^{p-1}\alpha r
$$
and using $R\approx 1$ and \eqref{elementary} we have 
$$
\sin(RT)\approx RT\approx T\approx \alpha\frac{r}{\varepsilon}. 
$$
Moreover, using again $\cos(\frac{\pi}{2}-y)\approx y$ we get
$$
\begin{aligned}
\cos T &\approx \frac{\pi}{2}-T\approx  \left(1-\frac{\alpha}{\pi-\varepsilon r-\varepsilon^{\frac{1}{p}}r}\right)^p+\left(1-\frac{r}{\varepsilon}\right)\\
&\geq \left(1-\frac{\alpha}{\pi-\varepsilon r-\varepsilon^{\frac{1}{p}}r}\right)^p
\gtrsim (\pi-\varepsilon r-\varepsilon^{\frac{1}{p}}r-\alpha)^p. 
\end{aligned}	
$$
Combining these estimates we obtain
$$
\begin{aligned}
&\int_{A_1\cap\{r<r_\varepsilon^0\}}\left|J_{f\textcolor{black}{_\varepsilon}}\right|^{-a}\lesssim \\
\phantom{=}&\lesssim \int_0^{r_\varepsilon^0} \int_0^{\pi-\varepsilon r-\varepsilon^{\frac{1}{p}}r} \frac{1}{(\pi-\varepsilon r-\varepsilon^{\frac{1}{p}}r-\alpha)^{ap-a}\alpha^a r^a}
\cdot\frac{|r^2\sin \alpha|^{1+a}}{(\pi-\varepsilon r-\varepsilon^{\frac{1}{p}}r-\alpha)^{2ap}}
\cdot\frac{\varepsilon^a}{\alpha^a r^a}\; d\alpha\; dr. \\
\end{aligned}
$$
As before (see \eqref{choosep}) the power of $(\pi-\varepsilon r-\varepsilon^{\frac{1}{p}}r-\alpha)$ is bigger than $-1$ and this term is integrable. Using $\sin \alpha\leq \alpha$ we obtain that the power of $\alpha$ is $-a-a+1+a=1-a>-1$ and this term is also integrable. The power of $r$ is $-a-a+2+2a$ and the power of $\varepsilon$ is positive so the whole integral is bounded.

{\underline{Estimate on $A_2$}: } We have $0<\alpha<\tilde{S}=(2-r)\pi-c_0\varepsilon^{1/p}(2-r)^\lambda$, $1<r_\varepsilon^1<r<2$,
$$
R_{\varepsilon}=\sqrt{\frac{\pi-2\varepsilon}{\pi-\varepsilon}}\cdot\frac{\sqrt{2-r}}{3}\text{ and }T_{\varepsilon}=\frac{\pi}{2}\Bigl(1-\Bigl(1-\frac{\alpha}{\tilde{S}}\Bigr)^p\Bigr)\psi,
$$
where $\psi=1-\left(\frac{\pi-2\varepsilon}{\pi-\varepsilon}\right)^{1-\lambda p}\varepsilon (2-r)^{\lambda p}$ and $\lambda =\frac{2}{1+a-3ap}>2$.

Since
$$
\partial_r \Bigl(\frac{\alpha}{\tilde{S}}\Bigr)=\frac{\alpha \left(\pi-\lambda c_0\varepsilon^{1/p}(2-r)^{\lambda-1}\right)}{\tilde{S}^2}\approx \frac{\alpha}{\tilde{S}^2},
$$
and $\lambda p-1>0$, we have
\begin{align*}
|\partial_r T|&=\frac{\pi}{2} p\Bigl(1-\frac{\alpha}{\tilde{S}}\Bigr)^{p-1}\frac{\alpha \left(\pi-c_0\varepsilon^{1/p}(2-r)^{\lambda-1}\right)}{\tilde{S}^2} \psi + \frac{\pi}{2}\Bigl(1-\Bigl(1-\frac{\alpha}{\tilde{S}}\Bigr)^p\Bigr)\partial_r\psi\\
& \lesssim \Bigl(\frac{\tilde{S}-\alpha}{\tilde{S}}\Bigr)^{p-1}\frac{\alpha}{\tilde{S}^2}+1\lesssim \Bigl(\frac{\tilde{S}-\alpha}{\tilde{S}}\Bigr)^{p-1}\frac{1}{\tilde{S}}+1
\end{align*}
and
$$
|\partial_\alpha T|=\frac{\pi}{2} \left| -p\left(1-\frac{\alpha}{\tilde{S}}\right)^{p-1}\frac{1}{\tilde{S}} \psi \right| \approx\left(\frac{\tilde{S}-\alpha}{\tilde{S}}\right)^{p-1}\frac{1}{\tilde{S}}.
$$
Using \eqref{elementary} we know that 
$$
T\approx 1-\left(1-\frac{\alpha}{\tilde{S}}\right)^p \approx \frac{\alpha}{\tilde{S}}
$$
and
\eqn{cosA2}
$$
\frac{\pi}{2}-T\approx  (1-\psi) + \left(1-\frac{\alpha}{\tilde{S}}\right)^{p}\psi\approx (1-\psi) + \left(1-\frac{\alpha}{\tilde{S}}\right)^{p},
$$
so together with \eqref{trojka}, $R\leq 1$ and $\alpha<\tilde{S}<\tilde{S}+c_0\varepsilon^{1/p}(2-r)^\lambda\leq \pi$ we get 
\begin{align*}
\frac{(R\cos T \sin(RT))^2}{\alpha(\pi-\alpha)} &\lesssim  \frac{ T^2 (\frac{\pi}{2}-T)^2}{\alpha(\pi-\alpha)}  
\approx \frac{ \frac{\alpha^2}{\tilde{S}^2} \Bigl((1-\psi) + \left(1-\frac{\alpha}{\tilde{S}}\right)^{p}\Bigr)^2}{\alpha(\pi-\alpha)}\\
&\lesssim \frac{\alpha^2}{\tilde{S}^2 \alpha} + \frac{(1-\psi)^2 + \Bigl(\frac{\tilde{S}-\alpha}{\tilde{S}}\Bigr)^{2p}}{\pi-\alpha} \lesssim \frac{1}{\tilde{S}} + \frac{\varepsilon^2(2-r)^{2\lambda p}}{\pi-\alpha} + \frac{(\tilde{S}-\alpha)^{2p}}{\tilde{S}^{2p}(\pi-\alpha)}\\
&\lesssim  \frac{1}{\tilde{S}} + \frac{\varepsilon^2(2-r)^{2\lambda p}}{\varepsilon^{1/p}(2-r)^\lambda} + \frac{(\tilde{S}-\alpha)^{2p}}{\tilde{S}^{2p}(\tilde{S}-\alpha)} \lesssim  \frac{1}{\tilde{S}}. \\
\end{align*}
Therefore using \eqref{derivace}, $\partial_{\alpha}R=0$, $\alpha<\tilde{S}<S\approx (2-r)$ and $p>1/2$ gives
\begin{align*}
\int_{A_2} \|Df\textcolor{black}{_\varepsilon}\|^2 &\lesssim \int_{r_\varepsilon^1}^2 \int_0^{\tilde{S}}\left[  \alpha (\pi-\alpha) \left[ (\partial_r R)^2 +  (\partial_r T)^2 + (\partial_\alpha T)^2 \right]  + \frac{(R\cos T \sin(RT))^2}{\alpha(\pi-\alpha)}\right]  d\alpha \; dr\\
&\lesssim \int_1^2 \int_0^{\tilde{S}} \left[  \alpha (\pi-\alpha) \left[ \frac{1}{2-r} + \frac{(\tilde{S}-\alpha)^{2p-2}}{\tilde{S}^{2p}} +1 + \frac{(\tilde{S}-\alpha)^{2p-2}}{\tilde{S}^{2p}} \right] +\frac{1}{\tilde{S}}\right] d\alpha\; dr \\
&\lesssim \int_1^2 \left[1+ \frac{\tilde{S}}{S} + \int_0^{\tilde{S}}  \alpha\frac{(\tilde{S}-\alpha)^{2p-2}}{\tilde{S}^{2p}} d\alpha\right] dr \lesssim \int_1^2 \left[1+ \tilde{S}\frac{\tilde{S}^{2p-1}}{\tilde{S}^{2p}} \right] dr \lesssim 1.
\end{align*}
Considering the Jacobian estimate, due to the fact that $\partial_\alpha R = 0$ we can rewrite \eqref{jakobian} as
$$
\int_{A_2} \left|J_{f\textcolor{black}{_\varepsilon}}\right|^{-a}\approx \int_{r_\varepsilon^1}^2 \int_0^{\tilde{S}}\left|\partial_r R\cdot \partial_\alpha T \right|^{-a} R^{-3a} |( \cos T)^2 \sin(RT)|^{-a}|r^2 \sin\alpha|^{1+a} d\alpha dr.\\
$$
We estimate (using again \eqref{gonio} and \eqref{elementary})
$$
\sin(RT)\approx RT\approx\sqrt{2-r} \Bigl(1-\Bigl(1-\frac{\alpha}{\tilde{S}}\Bigr)^p\Bigr)\approx \frac{\alpha}{\tilde{S}}\sqrt{2-r}
$$
and from \eqref{cosA2}
$$
\cos(T)\approx \frac{\pi}{2}-T\approx \Bigl(1-\frac{\alpha}{\tilde{S}}\Bigr)^p + (1-\psi)\gtrsim \Bigl(1-\frac{\alpha}{\tilde{S}}\Bigr)^p.
$$
Together this gives
\begin{align*}
&\int_{A_2} \left|J_{f\textcolor{black}{_\varepsilon}}\right|^{-a}\lesssim\\
&\lesssim  \int_{r_\varepsilon^1}^2 \int_0^{\tilde{S}} \Bigl|\frac{1}{\sqrt{2-r}} \Bigl(\frac{(\tilde{S}-\alpha)^{p-1}}{\tilde{S}^p}\Bigr) \Bigr|^{-a} \frac{1}{\sqrt{2-r}^{3a}}\Bigl(1-\frac{\alpha}{\tilde{S}}\Bigr)^{-2ap}\Bigl( \frac{\alpha}{\tilde{S}}\sqrt{2-r}\Bigr)^{-a} \alpha^{1+a}   d\alpha \; dr\\
&\lesssim \int_1^2 \int_0^{\tilde{S}} (\tilde{S}-\alpha)^{a-3ap} \tilde{S}^{3ap+a} (2-r)^{\frac{-3a}{2}}  d\alpha \; dr \\
&= \int_1^2 \tilde{S}^{3ap+a} (2-r)^{\frac{-3a}{2}}\int_0^{\tilde{S}} (\tilde{S}-\alpha)^{a-3ap}  d\alpha \; dr\lesssim 1,\\
\end{align*}
since $a-3ap>-1$ and $\tilde{S}<S\approx (2-r)$.

{\underline{Estimate on $D_1\cap \{r>r_\varepsilon^0\}$:} } On this set we have $S=\pi-\varepsilon r<\alpha<\pi$ and 
$$
R_{\varepsilon}=\frac{2}{3}+\frac{\varepsilon r}{3\pi}\text{ and }T_{\varepsilon}=\frac{\pi}{2}\left(1-\left(1-\frac{\pi-\alpha}{\varepsilon r}\right)^p\right)\left(1-(2-r)\varepsilon\right). 
$$
Let us first estimate 
\eqn{est}
$$
|\partial_{\alpha} T_{\varepsilon}|=\frac{\pi}{2}p\left(1-\frac{\pi-\alpha}{\varepsilon r}\right)^{p-1}\frac{1}{\varepsilon r}(1-(2-r)\varepsilon)
\approx (\alpha-\pi+\varepsilon r)^{p-1}\frac{1}{\varepsilon^p r^p} 
$$
and using $\pi-\alpha<\varepsilon r$
$$
\begin{aligned}
|\partial_r T_{\varepsilon}|&=\left|\frac{\pi}{2}\left(1-\left(1-\frac{\pi-\alpha}{\varepsilon r}\right)^p\right)\varepsilon -
\frac{\pi}{2}p\left(1-\frac{\pi-\alpha}{\varepsilon r}\right)^{p-1}\frac{\pi-\alpha}{\varepsilon r^2}(1-(2-r)\varepsilon)\right| \\
&\lesssim \frac{\varepsilon^{1-p}(\alpha-\pi+\varepsilon r)^{p-1}}{ r^{p}}.
\end{aligned}
$$
Now using $\sin RT\leq RT$, \eqref{elementary} and $\pi-\alpha<\varepsilon r$ we have 
$$
\frac{(R\cos T \sin(RT))^2}{\alpha(\pi-\alpha)}
\lesssim \frac{R^2 T^2}{\alpha(\pi-\alpha)}
\lesssim \frac{\left(1-\left(1-\frac{\pi-\alpha}{\varepsilon r}\right)^p\right)^2}{\alpha(\pi-\alpha)}
\lesssim \frac{\frac{(\pi-\alpha)^2}{\varepsilon^2 r^2}}{\alpha(\pi-\alpha)}
\leq \frac{1}{\alpha \varepsilon r}
\lesssim \frac{1}{\varepsilon r}.
$$
With the help of these estimates we use \eqref{derivace}, $\pi-\alpha< \varepsilon r$, 
$p>\frac{1}{2}$ and elementary integration to obtain 
$$
\begin{aligned}
\int_{D_1\cap\{r>r_\varepsilon^0\}} \|Df\textcolor{black}{_\varepsilon}\|^2&\lesssim\int_{r_{\epsilon}^0}^{r_\varepsilon^1} \int_{\pi-\varepsilon r}^{\pi}  
\Bigl[\alpha (\pi-\alpha) \left[|\partial_r R|^2+|\partial_r T|^2 + |\partial_{\alpha} T|^2\right]+\\
&\phantom{aaaaaaaaaaaaaaaaaaaaaaaaaaa} +  
\frac{(R\cos T \sin(RT))^2}{\alpha(\pi-\alpha)}\Bigr] \textcolor{black}{\; d\alpha} \; dr\\
&\lesssim\int_0^{r_\varepsilon^1} \int_{\pi-\varepsilon r}^{\pi}  
\left[(\pi-\alpha)(\alpha-\pi+\varepsilon r)^{2p-2}\frac{1}{\varepsilon^{2p} r^{2p}}+\frac{1}{\varepsilon r}\right] \; d\alpha\; dr\\
&\lesssim \int_0^{r_\varepsilon^1} \frac{1}{\varepsilon^{2p-1} r^{2p-1}} \int_{\pi-\varepsilon r}^{\pi} (\alpha-\pi+\varepsilon r)^{2p-2}\; d\alpha \;dr+\int_0^{r_\varepsilon^1} \frac{1}{\varepsilon r} (\varepsilon r)\; dr \\
&\lesssim \int_0^{r_\varepsilon^1} \frac{1}{\varepsilon^{2p-1} r^{2p-1}}  (\varepsilon r)^{2(p-1)+1} \;dr+1\approx 1.
\end{aligned}
$$

Now we estimate the Jacobian on $D_1$ using \eqref{jakobian}, $\partial_{\alpha}R=0$ and $R\approx 1$ as 
$$
\int_{D_1\cap\{r>r_\varepsilon^0\}}|J_{f\textcolor{black}{_\varepsilon}}|^{-a}\lesssim \int_0^{r_\varepsilon^1} \int_{\pi-\varepsilon r}^{\pi}|\partial_{\alpha}T|^{-a}\bigl|(\cos T)^2\sin(RT)\bigr|^{-a}|r^2\sin \alpha|^{1+a}\; d\alpha\; dr. 
$$ 
Using \eqref{est} we estimate $|\partial_{\alpha}T|$, further using \eqref{elementary}
$$
\sin(RT)\approx T\approx 1-\left(1-\frac{\pi-\alpha}{\varepsilon r}\right)^p\approx \frac{\pi-\alpha}{\varepsilon r}.
$$
As usual we estimate using $\cos(\frac{\pi}{2}-y)\approx y$ that
$$
\cos T\approx \left(1-\frac{\pi-\alpha}{\varepsilon r}\right)^p+(2-r)\varepsilon\gtrsim \left(\alpha-\pi+\varepsilon r\right)^p\frac{1}{\varepsilon^p r^p}.
$$
Combining these estimates with $|\frac{\sin \alpha}{\pi-\alpha}|\leq 1$ and $|\sin\alpha|\leq \varepsilon r$ we get
$$
\begin{aligned}
\int_{D_1\cap\{r>r_\varepsilon^0\}}|J_{f\textcolor{black}{_\varepsilon}}|^{-a}&\lesssim \int_{r_{\epsilon}^0}^{r_\varepsilon^1} \int_{\pi-\varepsilon r}^{\pi} \frac{\varepsilon^{ap} r^{ap}}{(\alpha-\pi+\varepsilon r)^{ap-a}} 
\frac{\varepsilon^{2ap} r^{2ap}}{\left(\alpha-\pi+\varepsilon r\right)^{2ap}}
\frac{(\varepsilon r)^a}{(\pi-\alpha)^a}
r^{2+2a}|\sin \alpha|^a \varepsilon r\; d\alpha\; dr\\
&\lesssim \varepsilon^{3ap+a+1}\int_0^{r_\varepsilon^1} r^{3ap+a+2+2a} \int_{\pi-\varepsilon r}^{\pi} \left(\alpha-\pi+\varepsilon r\right)^{a-3ap}\; d\alpha\; dr
\end{aligned}
$$
and this integral is bounded using \eqref{choosep}.

{\underline{Estimate on $D_1\cap \{r<r_\varepsilon^0\}$:} } On this set we have $\pi-\varepsilon r<\alpha<\pi$ and $0<r<r_\varepsilon^0\approx \varepsilon$ and 
$$
R_{\varepsilon}=\frac{2}{3}+\frac{\varepsilon r}{3\pi}\text{ and }T_{\varepsilon}=\frac{\pi}{2}\left(1-\left(1-\frac{\pi-\alpha}{\varepsilon r}\right)^p\right)\frac{r}{\varepsilon}. 
$$
Let us first estimate 
\eqn{est2}
$$
|\partial_{\alpha} T_{\varepsilon}|=\frac{\pi}{2}p\left(1-\frac{\pi-\alpha}{\varepsilon r}\right)^{p-1}\frac{1}{\varepsilon r}\cdot\frac{r}{\varepsilon}
\approx (\alpha-\pi+\varepsilon r)^{p-1}\frac{r^{1-p}}{\varepsilon^{1+p}}. 
$$
and using $\pi-\alpha<\varepsilon r$
$$
\begin{aligned}
|\partial_r T_{\varepsilon}|&=\left|\frac{\pi}{2}\left(1-\left(1-\frac{\pi-\alpha}{\varepsilon r}\right)^p\right)\frac{1}{\varepsilon} -
\frac{\pi}{2}p\left(1-\frac{\pi-\alpha}{\varepsilon r}\right)^{p-1}\frac{\pi-\alpha}{\varepsilon r^2}\cdot\frac{r}{\varepsilon}\right|\\
&\lesssim \frac{1}{\varepsilon} + (\alpha-\pi+\varepsilon r)^{p-1}\varepsilon^{1-p} r^{1-p}\frac{1}{\varepsilon}=\frac{1}{\varepsilon} + (\alpha-\pi+\varepsilon r)^{p-1} \frac{r^{1-p}}{\varepsilon^p}.
\end{aligned}
$$
Now using \eqref{elementary}, $\sin RT\leq RT$, $\alpha\approx 1$ and $\pi-\alpha< \varepsilon r$ we have
$$
\frac{(R\cos T \sin(RT))^2}{\alpha(\pi-\alpha)}
\lesssim \frac{R^2 T^2}{\alpha(\pi-\alpha)}
\lesssim \frac{\left(1-\left(1-\frac{\pi-\alpha}{\varepsilon r}\right)^p\right)^2\frac{r^2}{\varepsilon^2}}{\alpha(\pi-\alpha)}
\lesssim \frac{\frac{(\pi-\alpha)^2}{\varepsilon^2 r^2}\cdot \frac{r^2}{\varepsilon^2}}{\alpha(\pi-\alpha)}
\lesssim \frac{r}{\varepsilon^3}.
$$
With the help of these estimates we use \eqref{derivace}, $\pi-\alpha< \varepsilon r$, $p>\frac{1}{2}$, $r<r_\varepsilon^0\approx \varepsilon$ and elementary integration to obtain 
$$
\begin{aligned}
\int_{D_1\cap\{r<r_\varepsilon^0\}} \|Df\textcolor{black}{_\varepsilon}\|^2&\lesssim\int_0^{r_\varepsilon^0} \int_{\pi-\varepsilon r}^{\pi} 
\Bigl[\alpha (\pi-\alpha) \left[|\partial_r R|^2+|\partial_r T|^2 + |\partial_{\alpha} T|^2\right]+ \\
&\phantom{aaaaaaaaaaaaaaaaaaaaaaaaaaa} +  \frac{(R\cos T \sin(RT))^2}{\alpha(\pi-\alpha)}\Bigr]  \textcolor{black}{\; d\alpha}\; dra\\
&\lesssim\int_0^{r_\varepsilon^0} \int_{\pi-\varepsilon r}^{\pi} 
\left[1+\frac{\pi-\alpha}{\varepsilon^2} + (\pi-\alpha)(\alpha-\pi+\varepsilon r)^{2p-2}\frac{r^{2-2p}}{\varepsilon^{2+2p}}+\frac{r}{\varepsilon^3}\right] \; d\alpha\; dr\\
&\lesssim 1+ \int_0^{r_\varepsilon^0}\int_{\pi-\varepsilon r}^{\pi}\left[\frac{\varepsilon r}{\varepsilon^2} + (\alpha-\pi+\varepsilon r)^{2p-2}\frac{r^{3-2p}}{\varepsilon^{1+2p}}\right]\; d\alpha \;dr+\int_0^{r_\varepsilon^0}\frac{r}{\varepsilon^3} (\varepsilon r)\; dr
\\
&\lesssim  1+ \int_0^{r_\varepsilon^0}\left[\frac{\varepsilon^2 r^2}{\varepsilon^2}+ \frac{r^{3-2p}}{\varepsilon^{1+2p}}  (\varepsilon r)^{2p-2+1}\right] \;dr+1\approx 1.
\end{aligned}
$$
Now we estimate the Jacobian on $D_1$ using \eqref{jakobian}, $\partial_{\alpha}R=0$ and $R\approx 1$ as 
$$
\int_{D_1\cap\{r<r_\varepsilon^0\}}|J_{f\textcolor{black}{_\varepsilon}}|^{-a}\lesssim \int_0^{r_\varepsilon^0} \int_{\pi-\varepsilon r}^{\pi}|\partial_{\alpha}T|^{-a}\bigl|(\cos T)^2\sin(RT)\bigr|^{-a}|r^2\sin \alpha|^{1+a}\; d\alpha\; dr. 
$$ 
Using \eqref{est2} we estimate $|\partial_{\alpha}T|$, further using \eqref{elementary} we get
$$
\sin(RT)\approx T\approx \left[1-\left(1-\frac{\pi-\alpha}{\varepsilon r}\right)^p\right]\frac{r}{\varepsilon}\approx \frac{\pi-\alpha}{\varepsilon r}\cdot\frac{r}{\varepsilon}=\frac{\pi-\alpha}{\varepsilon^2}.
$$
As usual we estimate using $\cos(\frac{\pi}{2}-y)\approx y$ that
$$
\cos T\approx \left(1-\frac{\pi-\alpha}{\varepsilon r}\right)^p+\left(1-\frac{\varepsilon}{r}\right)\gtrsim \frac{\left(\alpha-\pi+\varepsilon r\right)^p}{\varepsilon^p r^p}.
$$
Combining these estimates with $|\frac{\sin \alpha}{\pi-\alpha}|\leq 1$ and $|\sin\alpha|\leq \varepsilon r$ we get
$$
\begin{aligned}
\int_{D_1\cap\{r<r_\varepsilon^0\}}|J_{f\textcolor{black}{_\varepsilon}}|^{-a}&\lesssim \int_0^{r_\varepsilon^0} \int_{\pi-\varepsilon r}^{\pi} 
\frac{\varepsilon^{ap+a} r^{ap-a}}{(\alpha-\pi-\varepsilon r)^{ap-a}}
\frac{\varepsilon^{2ap} r^{2ap}}{\left(\alpha-\pi-\varepsilon r\right)^{2ap}}
\frac{\varepsilon^{2a}}{(\pi-\alpha)^a}
|r^2|^{1+a}|\sin \alpha|^a \varepsilon r\; d\alpha\; dr\\
&\lesssim \varepsilon^{3ap+3a+1}\int_0^{r_\varepsilon^0} r^{3ap-a+2+2a+1} \int_{\pi-\varepsilon r}^{\pi} \left(\alpha-\pi-\varepsilon r\right)^{a-3ap}\; d\alpha\; dr
\end{aligned}
$$
and this integral is bounded using \eqref{choosep}.

{\underline{Estimate on $D_2$}: }  We have $S=(2-r)\pi<\alpha<\pi$, $r_\varepsilon^1<r<2$ and
$$
R_{\varepsilon}=\frac{1+r}{3}\text{ and }T_{\varepsilon}=\frac{\pi}{2}\left(1-\left(1-\frac{\pi-\alpha}{\pi-(2-r)\pi}\right)^p\right)\psi,
$$
where $\psi=1-\left(\frac{\pi-2\varepsilon}{\pi-\varepsilon}\right)^{1-\lambda p}\varepsilon (2-r)^{\lambda p}$ and $\lambda =\frac{2}{1+a-3ap}>2$.

First notice that similarly as before, since $\pi-\alpha<\pi-(2-r)\pi=(r-1)\pi$,
\begin{align*}
|\partial_r T|&=\frac{\pi}{2} \Bigl| -p\Bigl(1-\frac{\pi-\alpha}{\pi-(2-r)\pi}\Bigr)^{p-1}\frac{\pi(\pi-\alpha)}{(\pi-(2-r)\pi)^2} \psi + \Bigl(1-\Bigl(1-\frac{\pi-\alpha}{\pi-(2-r)\pi}\Bigr)^p\Bigr)\partial_r\psi \Bigr| \\
&\lesssim \frac{(\alpha-(2-r)\pi)^{p-1}(\pi-\alpha)}{(r-1)^{p+1}}+1 \lesssim \frac{(\alpha-(2-r)\pi)^{p-1}}{(r-1)^{p}}+1,\\
|\partial_\alpha T|&=\frac{\pi}{2} \Bigl| -p\Bigl(1-\frac{\pi-\alpha}{\pi-(2-r)\pi}\Bigr)^{p-1}\frac{1}{\pi-(2-r)\pi} \psi \Bigr| \approx\frac{(\alpha-(2-r)\pi)^{p-1}}{(r-1)^{p}}.
\end{align*}
To estimate the following term, we again use \eqref{gonio}, \eqref{trojka} and \eqref{elementary} and $0<\alpha-(2-r)\pi<\pi-(2-r)\pi=(r-1)\pi$:
\begin{align*}
\frac{(R\cos T \sin(RT))^2}{\alpha(\pi-\alpha)} &\lesssim  \frac{T^2 (\frac{\pi}{2}-T)^2  }{\alpha(\pi-\alpha)} 
\approx \frac{ \Bigl(1-\left(1-\frac{\pi-\alpha}{\pi-(2-r)\pi}\right)^p\Bigr)^2 \left((1-\psi) +\left(1-\frac{\pi-\alpha}{\pi-(2-r)\pi}\right)^{p}\psi\right)^2}{\alpha(\pi-\alpha)}\\
&\approx \frac{ \left(1-\left(1-\frac{\pi-\alpha}{\pi-(2-r)\pi}\right)\right)^2 }{\pi-\alpha}+\frac{\Bigl((1-\psi)^2 
+\Bigl(1-\frac{\pi-\alpha}{\pi-(2-r)\pi}\Bigr)^{2p}\Bigr)}{\alpha}\\
&\lesssim \frac{\pi-\alpha}{(\pi-(2-r)\pi)^2 } + \frac{(1-\psi)^2 + \left(\frac{\alpha-(2-r)\pi}{\pi-(2-r)\pi}\right)^{2p}}{\alpha}\\
&\lesssim \frac{1}{r-1} + \frac{\varepsilon^2(2-r)^{2\lambda p}}{2-r} + \frac{(\alpha-(2-r)\pi)^{2p-1}}{\left(\pi-(2-r)\pi\right)^{2p}} \lesssim   \frac{1}{r-1}.
\end{align*}
Now we can use all those estimates to integrate \eqref{derivace}
\begin{align*}
\int_{D_2} \|Df\textcolor{black}{_\varepsilon}\|^2 &\lesssim \int_{r_\varepsilon^1}^2 \int_{(2-r)\pi}^\pi \Bigl[  \alpha (\pi-\alpha) \left[ (\partial_r R)^2 + (\partial_r T)^2 + (\partial_\alpha T)^2 \right] +\\
&\phantom{aaaaaaaaaaaaaaaaaaaaaaaaaaa}  + \frac{(R\cos T \sin(RT))^2}{\alpha(\pi-\alpha)}\Bigr] d\alpha \; dr\\
&\lesssim \int_{r_\varepsilon^1}^2 \int_{(2-r)\pi}^\pi \left[  \alpha (\pi-\alpha) \left[ 1 + \frac{(\alpha-(2-r)\pi)^{2p-2}}{(r-1)^{2p}} \right]  +  \frac{1}{r-1} \right] d\alpha \; dr\\
&\lesssim \int_{r_\varepsilon^1}^2  \left[ 1+\frac{\pi-(2-r)\pi}{r-1} + \int_{(2-r)\pi}^\pi \frac{(\alpha-(2-r)\pi)^{2p-2}(\pi-\alpha)}{(r-1)^{2p}} d\alpha\right] dr\\
&\lesssim \int_{r_\varepsilon^1}^2  \left[ 1 + \frac{(\pi-(2-r)\pi)^{2p}}{(r-1)^{2p}} \right] dr\lesssim 1.\\
\end{align*}
To integrate the Jacobian, we estimate (using again \eqref{gonio} and \eqref{elementary})
$$
\sin(RT)\approx RT\approx\Bigl(1-\Bigl(1-\frac{\pi-\alpha}{\pi-(2-r)\pi}\Bigr)^p\Bigr)\approx \frac{\pi-\alpha}{r-1}
$$
and
$$
\cos(T)\approx \frac{\pi}{2}-T\gtrsim \Bigl(1-\frac{\pi-\alpha}{\pi-(2-r)\pi}\Bigr)^p.
$$
Since $\partial_\alpha R = 0$ and $R\approx 1$ we have using \eqref{jakobian}
\begin{align*}
\int_{D_2} |J_{f\textcolor{black}{_\varepsilon}}|^{-a} &\approx \int_{r_\varepsilon^1}^2 \int_{(2-r)\pi}^\pi\left| \partial_\alpha T \right|^{-a} |( \cos T)^2 \sin(RT)|^{-a}|r^2 \sin\alpha|^{1+a}\; d\alpha \; dr\\
&\lesssim\int_{r_\varepsilon^1}^2 \int_{(2-r)\pi}^\pi\left| \frac{(\alpha-(2-r)\pi)^{p-1}}{(r-1)^{p}}\right|^{-a} \Bigl(1-\frac{\pi-\alpha}{\pi-(2-r)\pi}\Bigr)^{-2ap} \Bigl(\frac{\pi-\alpha}{r-1}\Bigr)^{-a}(\pi-\alpha)^{1+a} \; d\alpha \; dr\\
&\lesssim \int_{r_\varepsilon^1}^2 \int_{(2-r)\pi}^\pi (\alpha-(2-r)\pi)^{a-3ap}(r-1)^{a+3ap} \; d\alpha \; dr \lesssim \int_{r_\varepsilon^1}^2 (\pi-(2-r)\pi)^{1+a-3ap} \;  dr \lesssim 1.\\
\end{align*}

{\bf Step 5. Integrability of $|Df\textcolor{black}{_\varepsilon}|^2$ and $J_{f\textcolor{black}{_\varepsilon}}^{-a}$ on $B\cup C$:}

{\underline{Estimate on $B\cap \{r>r_\varepsilon^0\}$:} } On this set we have 
$\tilde{S}_{\varepsilon}=\pi-\varepsilon r-\varepsilon^{\frac{1}{p}}r<\alpha<\pi-\varepsilon r=S_{\varepsilon}$ and 
$$
R_{\varepsilon}=\left(\frac{2}{3}+\frac{\varepsilon r}{3\pi}\right)
\frac{\alpha-(\pi-\varepsilon r-\varepsilon^{\frac{1}{p}} r)}{\varepsilon^{\frac{1}{p}} r}
+\frac{2-r}{3}\cdot
\frac{\pi-\varepsilon r-\alpha}{\varepsilon^{\frac{1}{p}} r}
\text{ and }T_{\varepsilon}=\frac{\pi}{2}(1-(2-r)\varepsilon). 
$$
Let us first estimate 
\eqn{aha1}
$$
|\partial_{\alpha} R_{\varepsilon}|=\left(\frac{2}{3}+\frac{\varepsilon r}{3\pi}\right)\frac{1}{\varepsilon^{\frac{1}{p}} r}
+\frac{2-r}{3}\cdot\frac{-1}{\varepsilon^{\frac{1}{p}} r}=\frac{\varepsilon^{1-\frac{1}{p}}}{3\pi}+\frac{1}{3\varepsilon^{\frac{1}{p}}}\approx\frac{1}{\varepsilon^{\frac{1}{p}}} . 
$$
and with the help of $\pi-\varepsilon r-\varepsilon^{\frac{1}{p}}r<\alpha<\pi-\varepsilon r$
\eqn{aha2}
$$
\begin{aligned}
|\partial_r R_{\varepsilon}|&=\Bigl|\frac{\varepsilon}{3\pi}\cdot\frac{\alpha-(\pi-\varepsilon r-\varepsilon^{\frac{1}{p}} r)}{\varepsilon^{\frac{1}{p}} r}
+\left(\frac{2}{3}+\frac{\varepsilon r}{3\pi}\right)\frac{\pi-\alpha}{\varepsilon^{\frac{1}{p}} r^2}\\
&\phantom{=|}+\frac{-1}{3}\cdot\frac{\pi-\varepsilon r-\alpha}{\varepsilon^{\frac{1}{p}} r}
+\frac{2-r}{3}\cdot\frac{\alpha-\pi}{\varepsilon^{\frac{1}{p}} r^2}\Bigr|\lesssim\frac{\varepsilon}{\varepsilon^{\frac{1}{p}}r}.
\\
\end{aligned}
$$
Further we estimate for $\pi-\varepsilon r-\varepsilon^{\frac{1}{p}}r<\alpha<\pi-\varepsilon r$
$$
\frac{(R\cos T \sin(RT))^2}{\alpha(\pi-\alpha)}\lesssim\frac{(\frac{\pi}{2}-T)^2}{\pi-\alpha}
\lesssim\frac{\varepsilon^2}{\varepsilon r}.
$$
Now using \eqref{derivace} (note that each term with $\partial_{\alpha} R$ always contains also $\cos T\approx \frac{\pi}{2}-T\approx\varepsilon$) we obtain using 
$\pi-\alpha\approx \varepsilon r$ and $\frac{1}{2}<p<1$
$$
\begin{aligned}
\int_{B\cap \{r>r_\varepsilon^0\}} \|Df\textcolor{black}{_\varepsilon}\|^2 &\lesssim\int_{r_{\epsilon}^0}^{r_\varepsilon^1} \int_{\pi-\varepsilon r-\varepsilon^{\frac{1}{p}} r}^{\pi-\varepsilon r} 
\Bigl[\alpha (\pi-\alpha) 
\left[|\partial_r R|^2+|\partial_r T|^2 + |\cos T\cdot \partial_{\alpha} R|^2\right]+ \\
\phantom{\leq\leq}&+ \frac{(R\cos T \sin(RT))^2}{\alpha(\pi-\alpha)}\Bigr]  \; dr\;  d\alpha\\
&\lesssim \int_0^{r_\varepsilon^1} (\varepsilon^{\frac{1}{p}} r)(\varepsilon r)\left[\frac{\varepsilon^2}{\varepsilon^{\frac{2}{p}} r^2}+1+\frac{\varepsilon^2}{\varepsilon^{\frac{2}{p}}}\right]\; dr
+\int_0^{r_\varepsilon^1} \int_{\pi-\varepsilon r-\varepsilon^{\frac{1}{p}}r}^{\pi-\varepsilon r}\frac{\varepsilon}{r}\; dr\; d\alpha \\
&\lesssim 1+\int_0^{r_\varepsilon^1} \varepsilon^{3-\frac{1}{p}}\; dr + \int_0^{r_\varepsilon^1} \varepsilon^{\frac{1}{p}} r\cdot\frac{\varepsilon}{r}\; dr \approx 1.\\
\end{aligned}
$$
Now we estimate the Jacobian on $B$ using \eqref{jakobian}, $\partial_{\alpha}T=0$, $\cos T\approx \frac{\pi}{2}-T\approx \varepsilon$, $\sin RT\approx 1$ and \eqref{choosep} as 
$$
\begin{aligned}
\int_{B\cap \{r>r_\varepsilon^0\}}|J_{f\textcolor{black}{_\varepsilon}}|^{-a}
&\lesssim \int_{r_{\epsilon}^0}^{r_\varepsilon^1} 
\int_{\pi-\varepsilon r-\varepsilon^{\frac{1}{p}}r}^{\pi-\varepsilon r}|\partial_{\alpha} R\cdot \partial_{r}T|^{-a}
\bigl|(\cos T)^2\sin(RT)\bigr|^{-a}|r^2\sin \alpha|^{1+a}\; d\alpha\; dr\\
&\lesssim \int_0^{r_\varepsilon^1} 
\int_{\pi-\varepsilon r-\varepsilon^{\frac{1}{p}}r}^{\pi-\varepsilon r}\Bigl|\frac{1}{\varepsilon^{\frac{1}{p}}}\ \varepsilon\Bigr|^{-a}
\varepsilon^{-2a} \; d\alpha\; dr = \int_0^{r_\varepsilon^1}
(\varepsilon^{\frac{1}{p}} r)\varepsilon^{\frac{a}{p}-a} \varepsilon^{-2a}\; dr \\
&\lesssim \int_0^{r_\varepsilon^1}
\varepsilon^{\frac{1+a}{p}-3a} \; dr \lesssim 1.
\\
\end{aligned}
$$

{\underline{Estimate on $B\cap \{r<r_\varepsilon^0\}$:} } On this set we have 
$\tilde{S}_{\varepsilon}=\pi-\varepsilon r-\varepsilon^{\frac{1}{p}}r<\alpha<\pi-\varepsilon r=S_{\varepsilon}$ and 
$$
R_{\varepsilon}=\left(\frac{2}{3}+\frac{\varepsilon r}{3\pi}\right)
\frac{\alpha-(\pi-\varepsilon r-\varepsilon^{\frac{1}{p}} r)}{\varepsilon^{\frac{1}{p}} r}
+\frac{2-r}{3}\cdot
\frac{\pi-\varepsilon r-\alpha}{\varepsilon^{\frac{1}{p}} r}
\text{ and }T_{\varepsilon}=\frac{\pi}{2}\cdot\frac{r}{\varepsilon}. 
$$
We can use the same estimates \eqref{aha1} and \eqref{aha2} as before. 
Further we estimate for $\pi-\varepsilon r-\varepsilon^{\frac{1}{p}}r<\alpha<\pi-\varepsilon r$
$$
\frac{(R\cos T \sin(RT))^2}{\alpha(\pi-\alpha)}\lesssim\frac{T^2}{\pi-\alpha}
\lesssim\frac{\frac{r^2}{\varepsilon^2}}{\varepsilon r}=\frac{r}{\varepsilon^3}.
$$
Now using \eqref{derivace}, $\pi-\alpha\approx \varepsilon r$ and $r_\varepsilon^0\approx\varepsilon$ we obtain 
$$
\begin{aligned}
\int_{B\cap \{r<r_\varepsilon^0\}} \|Df\textcolor{black}{_\varepsilon}\|^2 &\lesssim\int_0^{r_\varepsilon^0} \int_{\pi-\varepsilon r-\varepsilon^{\frac{1}{p}}r}^{\pi-\varepsilon r} 
\Bigl[\alpha (\pi-\alpha) 
\left[|\partial_r R|^2+|\partial_r T|^2 + |\partial_{\alpha} R|^2\right]+  \\
&\phantom{aaaaaaaaaaaaaaaaaaaaaaaaaaa} +  
\frac{(R\cos T \sin(RT))^2}{\alpha(\pi-\alpha)}\Bigr]  \;  d\alpha\; dr\\
&\lesssim \int_0^{r_\varepsilon^0} (\varepsilon^{\frac{1}{p}} r)(\varepsilon r)\left[\frac{\varepsilon^2}{\varepsilon^{\frac{2}{p}}r^2}+\frac{1}{\varepsilon^2}+\frac{1}{\varepsilon^{\frac{2}{p}}}\right]\; dr
+\int_0^{r_\varepsilon^0} \int_{\pi-\varepsilon r-\varepsilon^{\frac{1}{p}}r}^{\pi-\varepsilon r}\;\frac{r}{\varepsilon^3}\; d\alpha \; dr\\
&\lesssim 1+ \varepsilon^{1-\frac{1}{p}}\int_0^{r_\varepsilon^0}r^2\; dr+\int_0^{r_\varepsilon^0} (\varepsilon^{\frac{1}{p}} r)\frac{r}{\varepsilon^3}\; dr\approx 1.\\
\end{aligned}
$$
Now we estimate the Jacobian on $B$ using \eqref{jakobian}, $\partial_{\alpha}T=0$, 
$\sin RT\approx T\approx \frac{r}{\varepsilon}$ and
$$ 
\cos T\approx\frac{\pi}{2}-T\approx\frac{\varepsilon-r}{\varepsilon}
\text{ and }\varepsilon-r_\varepsilon^0=\varepsilon-\frac{\varepsilon-2\varepsilon^2}{1-\varepsilon^2}\approx \varepsilon^2
$$ 
as 
$$
\begin{aligned}
\int_{B\cap \{r<r_\varepsilon^0\}}|J_{f\textcolor{black}{_\varepsilon}}|^{-a}
&\lesssim \int_0^{r_\varepsilon^0} 
\int_{\pi-\varepsilon r-\varepsilon^{\frac{1}{p}}r}^{\pi-\varepsilon r}|\partial_{\alpha} R\cdot \partial_{r}T|^{-a}
\bigl|(\cos T)^2\sin(RT)\bigr|^{-a}|r^2\sin \alpha|^{1+a}\; d\alpha\; dr\\
&\lesssim \int_0^{r_\varepsilon^0} 
\int_{\pi-\varepsilon r-\varepsilon^{\frac{1}{p}}r}^{\pi-\varepsilon r}\left|\frac{1}{\varepsilon^{\frac{1}{p}}}\cdot \frac{1}{\varepsilon}\right|^{-a}
\frac{\varepsilon^{2a}}{(\varepsilon-r)^{2a}}\cdot \frac{\varepsilon^a}{r^a}|r^2|^{1+a}\; d\alpha\; dr\\
&\lesssim \varepsilon^{\frac{a}{p}+4a}\int_0^{r_\varepsilon^0} \frac{1}{(\varepsilon-r)^{2a}} dr
\lesssim \varepsilon^{\frac{a}{p}+4a} (\varepsilon-r_\varepsilon^0)^{1-2a}\approx 
 \varepsilon^{\frac{a}{p}+4a} \varepsilon^{2-4a}\lesssim 1. 
\\
\end{aligned}
$$

{\underline{Estimate on $C$}: } We have $(2-r)\pi-\delta(\varepsilon, r)=\tilde{S}<\alpha<S=(2-r)\pi$, $r_\varepsilon^1<r<2$,
$$
R_{\varepsilon}=\sqrt{\frac{\pi-2\varepsilon}{\pi-\varepsilon}}\cdot\frac{\sqrt{2-r}}{3}\cdot \frac{(2-r)\pi-\alpha}{\delta(\varepsilon, r)} +\frac{1+r}{3}\left(\frac{\alpha-(2-r)\pi + \delta(\varepsilon,r)}{\delta(\varepsilon, r)}\right)
$$
and
$$
T_{\varepsilon}=\frac{\pi}{2}\psi=\frac{\pi}{2}\Bigl(1-\Bigl(\frac{\pi-2\varepsilon}{\pi-\varepsilon}\Bigr)^{1-\lambda p}\varepsilon (2-r)^{\lambda p}\Bigr),
$$
where $\lambda =\frac{2}{1+a-3ap}$ and $\delta(\varepsilon, r)=c_0\varepsilon^{1/p}(2-r)^\lambda$. Here it is crucial that 
\eqn{lambda_p}
$$
\lambda p  \geq \frac{2}{1+a-3a/2}\cdot\frac{1}{2}\geq 1
$$
and  
\eqn{delta_psi}
$$
\cos T\approx \frac{\pi}{2}-T\approx 1-\psi\approx \varepsilon (2-r)^{\lambda p}\approx \delta^p.
$$
We first estimate
$$
\partial_r\delta = -\lambda c_0\varepsilon^{1/p}(2-r)^{\lambda-1} = \frac{-\lambda}{2-r} \delta,
$$
so
$$
\partial_r\left(\frac{(2-r)\pi-\alpha}{\delta}\right) =  \frac{-\pi\delta + ((2-r)\pi-\alpha)  \frac{-\lambda\pi}{(2-r)\pi} \delta}{\delta^2}=-\pi\frac{1+\lambda\frac{(2-r)\pi-\alpha}{(2-r)\pi}}{\delta}.
$$
Then we can use it to estimate
\begin{align*}
|\partial_r R|& = \Bigl| \sqrt{\frac{\pi-2\varepsilon}{\pi-\varepsilon}}\cdot\frac{1}{6\sqrt{(2-r)}}\cdot\frac{(2-r)\pi-\alpha}{\delta}+\sqrt{\frac{\pi-2\varepsilon}{\pi-\varepsilon}}\cdot\frac{\sqrt{2-r}}{3}\Bigl(-\pi\frac{1+\lambda\frac{(2-r)\pi-\alpha}{(2-r)\pi}}{\delta}\Bigr)\\
&+\frac{\alpha-(2-r)\pi+\delta}{3\delta}+\frac{1+r}{3}\cdot\pi\frac{1+\lambda\frac{(2-r)\pi-\alpha}{(2-r)\pi}}{\delta}\Bigr|\lesssim \frac{1}{\sqrt{2-r}}+\frac{1}{\delta}+1+\frac{1}{\delta}\lesssim \frac{1}{\sqrt{2-r}}+\frac{1}{\delta}\\
\intertext{ and }
|\partial_\alpha R|&=\frac{-\sqrt{\frac{\pi-2\varepsilon}{\pi-\varepsilon}}\sqrt{(2-r)}+1+r}{3\delta}\approx \frac{1}{\delta}.
\end{align*}
Since $|\partial_\alpha T|=0$, using \eqref{derivace}, \eqref{delta_psi}, \eqref{lambda_p} and \eqref{trojka} we obtain
\begin{align*}
\int_{C} \|Df\textcolor{black}{_\varepsilon}\|^2 &\lesssim \int_{r_\varepsilon^1}^2 \int_{(2-r)\pi-\delta}^{(2-r)\pi}  \Bigl[ |\cos T\cdot\partial_r R|^2 + |\partial_r T|^2  + |\cos T\cdot\partial_\alpha R|^2 + \\
&\phantom{aaaaaaaaaaaaaaaaaaaaaaaaaaa} +  \frac{(R\cos T \sin(RT))^2}{\alpha(\pi-\alpha)}\Bigr]  d\alpha \; dr\\
&\lesssim \int_{r_\varepsilon^1}^2 \int_{(2-r)\pi-\delta}^{(2-r)\pi}  \left[ \left(\frac{1}{2-r}+\frac{1}{\delta^2}\right) \delta^{2p}+\varepsilon^2 (2-r)^{2\lambda p-2} +\frac{\delta^{2p}}{\delta^2}  + \frac{ 1}{\alpha} +\frac{1}{\pi-\alpha} \right] d\alpha \; dr\\
&\lesssim \int_{r_\varepsilon^1}^2 \int_{(2-r)\pi-\delta}^{(2-r)\pi}  \left[ \frac{\delta^{2p}}{2-r}+\frac{\delta^{2p}}{\delta^2}+1  + \frac{ 1 }{\alpha} +\frac{1}{\pi-\alpha} \right]d\alpha \; dr\\
&\approx 1+ \int_{r_\varepsilon^1}^2 \left[\frac{\delta^{2p+1}}{2-r}+\delta^{2p-1}  + \log\left(\frac{(2-r)\pi}{(2-r)\pi-\delta}\right) +\log\left(\frac{\pi-(2-r)\pi+\delta}{\pi-(2-r)\pi}\right)\right] dr, \\
\intertext{now since $\delta\leq (2-r)\pi/2$ and $\varepsilon<\varepsilon r<\pi-(2-r)\pi$ (recall that $\pi-\epsilon r>(2-r)\pi$ for $r>r_{\epsilon}^1$)}
&\lesssim 1+ \int_{r_\varepsilon^1}^2 \frac{(2-r)^{2\lambda p+\lambda}}{2-r}+1 + \log\Bigl(\frac{(2-r)\pi}{(2-r)\pi/2}\Bigr) +\log\Bigr(1+\frac{\delta}{\varepsilon r}\Bigr) dr \\
&\lesssim 1+ \int_{r_\varepsilon^1}^2 (2-r)^{2\lambda p+\lambda-1}  + \log\left(2\right) +\log\Bigl(1+\frac{\varepsilon^{1/p}}{\varepsilon }\Bigr)  dr\lesssim 1. \\
\end{align*}

Considering the Jacobian estimate, due to the fact that $\partial_\alpha T = 0$ we can rewrite \eqref{jakobian} as
$$
\int_C |J_{f\textcolor{black}{_\varepsilon}}|^{-a}\approx \int_{(r, \alpha)}\left| \partial_\alpha R\cdot \partial_r T \right|^{-a} R^{-3a} |( \cos T)^2 \sin(RT)|^{-a}|r^2 \sin(\alpha)|^{1+a}dr d\alpha.
$$
Also we need
\begin{align*}
\sin(RT)&\approx RT \approx R = \sqrt{\frac{\pi-2\varepsilon}{\pi-\varepsilon}}\cdot\frac{\sqrt{2-r}}{3}\cdot \frac{(2-r)\pi-\alpha}{\delta} +\frac{1+r}{3}\Bigl(\frac{\alpha-(2-r)\pi + \delta}{\delta}\Bigr)\\
&\geq  C\Bigl[\sqrt{2-r}\cdot \frac{(2-r)\pi-\alpha}{\delta} +\sqrt{2-r}\Bigl(\frac{\alpha-(2-r)\pi + \delta}{\delta}\Bigr) \Bigr]=C\sqrt{2-r}.
\end{align*}
Together with \eqref{delta_psi}, $\delta\approx \varepsilon^{1/p}(2-r)^\lambda$ and \eqref{choosep} it yields
\begin{align*}
&\int_C |J_{f\textcolor{black}{_\varepsilon}}|^{-a}\approx \int_{r_\varepsilon^1}^2\int_{(2-r)\pi-\delta}^{(2-r)\pi}\left|\frac{\varepsilon(2-r)^{\lambda p-1}}{\delta}\right|^{-a} R^{-4a}\delta^{-2ap} 
\sin^{1+a} \alpha \;d\alpha \; dr\\
&\lesssim \int_{r_\varepsilon^1}^2 \int_{(2-r)\pi-\delta}^{(2-r)\pi}\frac{\delta^{a-2ap}}{\varepsilon^a} (2-r)^{a-\lambda ap}\left(\sqrt{2-r}\right)^{-4a}  d\alpha \; dr \approx \int_{r_\varepsilon^1}^2 \frac{\delta^{1+a-2ap}}{\varepsilon^a (2-r)^{a+\lambda ap}}\;dr \\
&\approx \int_{r_\varepsilon^1}^2  \frac{\varepsilon^{(1+a-2ap)/p}(2-r)^{\lambda(1+a-2ap)}}{\varepsilon^a (2-r)^{a+\lambda ap}}\;dr =\int_{r_\varepsilon^1}^2 \varepsilon^{(1+a-3ap)/p}(2-r)^{\lambda(1+a-3ap)-a} \;dr \\
& \lesssim \int_{r_\varepsilon^1}^2(2-r)^{2-a}dr\lesssim 1,
\end{align*}
since $\lambda =\frac{2}{1+a-3ap}$.

{\bf Step 6. Extension to $\overline{B(0,10)}\setminus B(0,2)$:} 
Our mapping $f\textcolor{black}{_\varepsilon}:\overline{B(0,2)}\to\er^3$ is defined on the sphere $\partial B(0,2)$ in polar coordinates as 
$$
f(2,\alpha,\beta)=(\cos T,T,\beta)
$$
where 
$$
T=T(\varepsilon,\alpha)=\frac{\pi}{2}\left(1-\left(\frac{\alpha}{\pi}\right)^p\right). 
$$

We define it on $B(0,10)\setminus B(0,2)$ as a simple interpolation between $(r,\pi-\alpha,\beta)$ on $\partial B(0,10)$ and this mapping on $\partial B(0,2)$, i.e. for $r\in[2,10]$ we set
$$
f\textcolor{black}{_\varepsilon}(r,\alpha,\beta)=\left(\frac{r-2}{8}10+\frac{10-r}{8}\cos T,\frac{r-2}{8}(\pi-\alpha)+\frac{10-r}{8}T,\beta\right).
$$
Note \textcolor{black}{that this is independent of $\varepsilon$ and} that the mapping $(10,\pi-\alpha,\beta)$ on $\partial B(0,10)$ is actually the identity mapping up to reflection $(x,y,z)\to(x,y,-z)$\textcolor{black}{, so after we compose it with this reflection as mentioned in Step 1 we obtain the identity on the boundary}. 
Our mapping $f\textcolor{black}{_\varepsilon}$ is a homeomorphism on  $\partial B(0,10)$ and $\partial B(0,2)$, so similarly to Step 3 it is enough to show that $J_{f\textcolor{black}{_\varepsilon}}\neq 0$ to obtain that it is a homeomorphism on $B(0,10)\setminus B(0,2)$. Using first line of \eqref{jakobian} it is enough to show that
$$
\begin{aligned}
0\neq& \partial_r\tilde{r} \partial_{\alpha}\tilde{\alpha}-\partial_{\alpha}\tilde{r} \partial_{r}\tilde{\alpha}\\
=&\frac{1}{8^2}\Bigl[(10-\cos T)\left(2-r+(10-r)\partial_{\alpha}T\right)-\left((10-r)(-\sin T)\partial_{\alpha}T\right)(\pi-\alpha-T)\Bigr]\\
=&\frac{1}{8^2}\Bigl[(10-\cos T)+(\pi-\alpha-T)\sin T\Bigr](10-r)\partial_{\alpha}T+\frac{1}{8^2}(10-\cos T)(2-r).\\
\end{aligned}
$$
Since $\partial_{\alpha}T<0$ and $-\partial_{\alpha}T>\frac{p}{2}\geq \frac{1}{4}$ for $p\in[\frac{1}{2},1]$ (and $\alpha\in[0,\pi]$) we obtain 
$$
\begin{aligned}
-8^2\left(\partial_r\tilde{r} \partial_{\alpha}\tilde{\alpha}-\partial_{\alpha}\tilde{r} \partial_{r}\tilde{\alpha}\right)
\geq \Bigl[(10-1)+(\pi-\pi-\frac{\pi}{2})\Bigr](10-r)\frac{1}{4}+(10-1)(r-2)\geq 1.\\
\end{aligned}
$$
To obtain integrability of $J_{f\textcolor{black}{_\varepsilon}}^{-a}$ it is thus enough to use first line of \eqref{jakobian} and show the finiteness of 
$$
\int_{B(0,10)\setminus B(0,2)}\frac{\bigl|r^2\sin \alpha\bigr|^{1+a}}
{\bigl|\tilde{r}\sin\tilde{\alpha}\bigr|^a}\; dr\; d\alpha\; d\beta. 
$$
Since 
$$
\tilde{r}\geq \cos T\geq C \alpha^p
$$ 
and using \eqref{elementary}
\eqn{tildealpha}
$$
\begin{aligned}
\tilde{\alpha}&=\frac{r-2}{8}(\pi-\alpha)+\frac{10-r}{8}\frac{\pi}{2}\left(1-\left(\frac{\alpha}{\pi}\right)^p\right)\\
&\approx \frac{r-2}{8}(\pi-\alpha)+\frac{10-r}{8}\Bigl(1-\frac{\alpha}{\pi}\Bigr)\approx \pi-\alpha\\
\end{aligned}
$$  
we obtain the convergence easily as $a<2$ and $\frac{1}{2}<p<1$. 

It remains to show the finiteness of $\int |Df\textcolor{black}{_\varepsilon}|^2$ on $B(0,10)\setminus B(0,2)$ using first line of \eqref{derivace}. 
Since $|\partial_r\tilde{r}|\leq C$ and $|\partial_r\tilde{\alpha}|\leq C$ it is easy to estimate first two terms. Further $r\approx 1$ and 
$$
|\partial_{\alpha}\tilde{r}|+|\partial_{\alpha}\tilde{\alpha}|\leq C+C|\partial_{\alpha}T|\leq C+C \alpha^{p-1}
$$
give us boundedness of
$$
\int_2^{10}\int_0^{\pi}\Bigl[\left(\frac{\partial_\alpha \tilde{r}}{r}\right)^2+\left(\frac{\tilde{r} \partial_\alpha \tilde{\alpha}}{r}\right)^2 \Bigr]r^2\sin\alpha\textcolor{black}{\; d\alpha}\; dr
\leq C \int_0^{\pi}(\alpha)^{2p-2}\alpha\; d\alpha.
$$
Using \eqref{tildealpha} it is easy to show convergence of the last term 
$$
\int_2^{10}\int_0^{\pi}\Bigl(\frac{\tilde{r}\sin\tilde{\alpha}}{r\sin\alpha}\Bigr)^2 r^2\sin\alpha\textcolor{black}{\; d\alpha}\; dr. 
$$
\textcolor{black}{We thus showed that when we extend $f_\varepsilon$, the energy \ref{energy_def} stays uniformly bounded for the whole sequence.}
 
\textcolor{black}{
{\bf Step 7. Violation of the (INV) condition:} 
Our limit mapping $f$ violates the (INV) condition on $B(0,r)$ for every $r\in(0,1)$. This can be easily seen as the mapping is continuous on $S(0,r)$, so we can consider the classical topological degree. 
We have
$$
R = \frac{2-r}{3}\text{ and }T=\frac{\pi}{2}\left(1-\left(\frac{\pi-\alpha}{\pi}\right)^p\right)
$$
on $B(0,1)\setminus\{0\}$.
The image of $S(0,r)$ is only the inner drop, so its topological image is the inside of this drop. However, when we take $0<r_1<r_2<1$, we can see that the smaller sphere is mapped onto a bigger drop, which contains the smaller drop (= image of the bigger ball). This shows that the material from $S(0,r_2)$ is ejected outside of $\im_T(f,B(0,r_2))$, which itself is enough to break the (INV) condition. Moreover, the material which is mapped into $\im_T(f,B(0,r_2))$ was originally outside of $B(0,r_2)$.
}

\end{proof}

\textcolor{black}
{
\begin{proof}[Proof of Theorem \ref{notthesame}] 
The mapping here is the same as mapping $f$ from Theorem \ref{example} which is a weak limit of homeomorphism $f_m$ from that statement. However, it does not satisfy the (INV) condition and hence Theorem \ref{distthm} (b) implies that it cannot be obtained as strong limit of homeomorphisms $h_m\in W^{1,2}$. 
\end{proof}
}

\textcolor{black}{
\section*{Acknowledgement}
We would like to thank the referees for carefully reading the manuscript and for many valuable comments which improved its readibility.
}

\end{document}